\definecolor{light-gray}{gray}{0.6}
\tikzstyle{propagator}=[decorate,decoration={snake,amplitude=0.8mm}]
\tikzstyle{smallpropagator}=[decorate,decoration={snake,segment length=3mm,amplitude=0.5mm}]
\tikzstyle{firstdash}=[dashed,line cap=round, dash pattern=on 2pt off 1pt]
\tikzstyle{seconddash}=[dashed,line cap=round, dash pattern=on 0.5pt off 1pt]
\newcommand{\drawWLD}[2]{

\pgfmathsetmacro{\n}{#1}
\pgfmathsetmacro{\radius}{#2}
\pgfmathsetmacro{\angle}{360/\n}
    \foreach \i in {1,2,...,\n} {
      \pgfmathsetmacro{\x}{\angle*\i}
        \draw[-,shorten >=-\radius*0.1 cm,shorten <=-\radius*0.1 cm]  (\x:\radius cm)-- (\x + \angle: \radius cm);
    }

\draw (\angle:\radius) node {$\bullet$};
}
\newcommand{\drawprop}[4]{
\pgfmathsetmacro{\r}{#1}
\pgfmathsetmacro{\bumpr}{#2}
\pgfmathsetmacro{\s}{#3}
\pgfmathsetmacro{\bumps}{#4}
\pgfmathsetmacro{\perturbe}{\angle/\n}

\begin{scope}
\clip (\angle*\r:\radius) -- (\angle + \angle*\r:\radius) -- (\angle*\s:\radius) -- (\angle + \angle*\s:\radius) -- (\angle*\r:\radius);
\draw[propagator] (\angle*\r + \angle/2 + \bumpr*\perturbe:\radius) -- (\angle*\s + \angle/2 + \bumps*\perturbe:\radius);
\end{scope}
}
\newcommand{\modifiedprop}[5]{
\pgfmathsetmacro{\r}{#1}
\pgfmathsetmacro{\bumpr}{#2}
\pgfmathsetmacro{\s}{#3}
\pgfmathsetmacro{\bumps}{#4}
\pgfmathsetmacro{\perturbe}{\angle/\n}

\begin{scope}
\clip (\angle*\r:\radius) -- (\angle + \angle*\r:\radius) -- (\angle*\s:\radius) -- (\angle + \angle*\s:\radius) -- (\angle*\r:\radius);
\draw[#5] (\angle*\r + \angle/2 + \bumpr*\perturbe:\radius) -- (\angle*\s + \angle/2 + \bumps*\perturbe:\radius);
\end{scope}
}
\newcommand{\boundaryprop}[4]{
\pgfmathsetmacro{\r}{#1}
\pgfmathsetmacro{\bumpr}{#2}
\pgfmathsetmacro{\s}{#3}
\pgfmathsetmacro{\perturbe}{\angle/\n}

\begin{scope}
\clip (\angle*\r:\radius) -- (\angle + \angle*\r:\radius) -- (\angle*\s - \angle:\radius) -- (\angle*\s:\radius) -- (\angle + \angle*\s:\radius) -- (\angle*\r:\radius);
\draw[#4] (\angle*\r + \angle/2 + \bumpr*\perturbe:\radius) -- (\angle*\s:\radius);
\end{scope}
	
}
\newcommand{\drawnumbers}{
  \foreach \i in {1,2,...,\n} {
  \pgfmathsetmacro{\x}{\angle*\i}
  \draw (\x:\radius*1.15) node {\footnotesize \i};
}
}
\newcommand{\boundA}[3]{
	\pgfmathsetmacro{\r}{#1}
	\pgfmathsetmacro{\bumpr}{#2}
	\pgfmathsetmacro{\destination}{#3}
	\pgfmathsetmacro{\perturbe}{\angle/\n}
	\path [name path=polyedge1] (\angle*\r:\radius) -- (\angle*\r + \angle:\radius);
	\path [name path=radius1] (0:0) -- (\angle*\r + \angle/2 + \bumpr*\perturbe:\radius);
	\draw[->,
	name intersections={of=polyedge1 and radius1,by=p},
	shorten >=\radius*0.1 cm] (p) ++(\angle*\r + \angle/2 + \bumpr*\perturbe:\radius*0.15) -- (\angle*\destination: \radius*1.15);

}
\newcommand{\boundB}[3]{
	\pgfmathsetmacro{\rangle}{#1*\angle + \angle/2 + #2*\angle/\n}
	\pgfmathsetmacro{\sangle}{#1*\angle + \angle/2 + #3*\angle/\n}

	\draw[->,shorten <=\radius*0.02cm,shorten >=\radius*0.05cm] (\rangle:\radius*1.05) -- (\sangle:\radius*1.05);

}
\newcommand{\makediag}[8]{
	\begin{tikzpicture}[rotate=60,baseline=(current bounding box.east)]
	\begin{scope}
	\drawWLD{6}{0.8}
	\drawprop{#1}{#2}{#3}{#4}
	\drawprop{#5}{#6}{#7}{#8}
	\end{scope}
	\end{tikzpicture}
}
\newcommand{\R}{\mathbb{R}}
\newcommand{\Gr}{\mathbb{G}_{\R, \geq 0}}
\newcommand{\D}{\partial}
\newcommand{\rd}{\textrm{d}}
\newcommand{\Res}{\textrm{Res}}
\def\ba #1\ea{\begin{align} #1 \end{align}}
\def\bas #1\eas{\begin{align*} #1 \end{align*}}
\def\bml #1\eml{\begin{multline} #1 \end{multline}}
\def\bmls #1\emls{\begin{multline*} #1 \end{multline*}}
\newcommand{\cP}{\mathcal{P}}
\newcommand{\cI}{\mathcal{I}}
\newcommand{\cC}{\mathcal{C}}
\newcommand{\cB}{\mathcal{B}}
\newcommand{\G}{\mathbb{G}}
\newcommand{\Prop}{\textrm{Prop}}
\newcommand{\cW}{\mathcal{W}}
\newcommand{\cZ}{\mathcal{Z}}
\newcommand{\BB}{\mathcal{B}}
\newcommand{\CS}{\mathcal{S}}
\newtheorem{thm}{Theorem}[section]
\newtheorem{conj}[thm]{Conjecture}
\newtheorem{cor}[thm]{Corollary}
\newtheorem{prop}[thm]{Proposition}
\newtheorem{mnemonic}[thm]{Graphical Prompt}
\theoremstyle{remark}
\newtheorem{eg}[thm]{Example}
\theoremstyle{definition}
\newtheorem{dfn}[thm]{Definition}
\newtheorem{rmk}[thm]{Remark}
\newtheorem{ntn}[thm]{Notation}
\begin{document}

\title{A study in $\Gr(2,6)$: from the geometric case book of Wilson loop diagrams and SYM $N=4$}
\author{S. Agarwala and S. Fryer}
\date{}
\maketitle

\begin{abstract}
We study the geometry underlying the Wilson loop diagram approach to calculating scattering amplitudes in the gauge theory of Supersymmetric Yang Mills (SYM) $N=4$. By applying the tools developed to study total positivity in the real Grassmannian, we are able to systematically compute with all Wilson loop diagrams of a given size and find unexpected patterns and relationships between them.
We focus on the smallest nontrivial multi-propagator case, consisting of 2 propagators on 6 vertices, and compute the positroid cells associated to each diagram and the homology of the subcomplex they generate in $\Gr(2,6)$. We also verify in this case the conjecture that the spurious singularities of the volume functional {\em do} all cancel on the codimension 1 boundaries of these cells.
\end{abstract}

\section{Introduction}\label{sec:intro}

In this paper, we consider the geometric underpinnings of the scattering amplitudes of SYM ${N=4}$ theory. A close association between the SYM $N=4$ theory and polytopes in the positive Grassmannians was established with the advent of the Amplituhedron \cite{Arkani-Hamed:2013jha}. There, the on shell scattering amplitudes in SYM $N=4$ are computed using BCFW diagrams. Another approach to understanding the amplitudes of SYM $N=4$ theory is via the study of MHV amplitudes. This is the approach taken in this paper, where the MHV amplitudes are represented by Wilson loop diagrams. In both these cases the scattering amplitudes are given in terms of volumes polytopes. The actual volume is computed as a function of the external particle data, here represented by a matrix $\cZ$.

Conjecturally, the Amplituhedron is tiled by the image (under multiplication by $\cZ$) of the $4k$-dimensional cells of $\Gr(k,n)$ that correspond to the BCFW diagrams in the physical theory \cite{Arkani-Hamed:2013jha,AmplituhedronDecomposition}. There is a large body of work connecting BCFW diagrams to plabic graphs \cite[Chapter 4]{cyclichyperplanes,AmplituhedronDecomposition,GrassmannAmplitudebook}, and from there to a stratification by positroid cells of a subspace of $\Gr(k, n)$ \cite{Postnikov}.

In this paper, we follow the Wilson loop diagram approach to understanding SYM $N=4$ theory. While BCFW diagrams give the on shell scattering amplitudes of SYM $N=4$ theory, the Wilson loop gives the entire scattering amplitude \cite{Alday:2007hr}. The calculations done in this paper consider $N^kMHV$ diagrams in twistor space \cite{Boels:2007qn, Bullimore:2009cb, Mason:2010yk} which we call Wilson loop diagrams. These are the equivalent of Feynman diagrams for the theory in the twistor space setting. One of the ultimate goals of this program is to understand the geometry underlying the Wilson loop diagrams, and the space they are mapped to by the external data matrices $\cZ$, just as in the Amplituhedron. There is some work already in relating the geometry defined by the Wilson loop diagrams to the Amplituhedron \cite{Amplituhedronsquared}, but there are several barriers that make understanding this geometric relationship difficult.

For instance, the biggest difference between the Wilson loop diagram and the BCFW setting is the need for a gauge vector. As noted above, the BCFW diagrams correspond to $4k$-dimensional subspaces of the positive Grassmannian $\Gr(k, n)$. The $N^kMHV$ diagrams depend on a choice of gauge, and if we include the gauge structure then the Wilson loop diagrams define $4k$-dimensional subspaces of the full Grassmannian $\G_{\R}(k, n+1)$. However, if we omit the gauge data, the Wilson loop diagrams tile a (conjecturally $3k$-dimensional) subspace of the positive Grassmannian $\Gr(k,n)$ \cite{Wilsonloop}.

On the one hand, this $3k$-dimensional subspace seems to contain much of the physically interesting structure of the Wilson loop diagrams, in that all the cancellations of singularities in the scattering amplitude computation happen on the boundaries of the $3k$-dimensional CW complex induced on this subspace by the positroid decomposition of $\Gr(k,n)$. In addition, tools from total positivity are extremely effective for analyzing the behavior of this complex and identifying patterns not previously observed. However, one cannot interpret the scattering amplitude as the volume of the image as this space under the external data; in order to give a geometric interpretation of the amplitude, one must consider the gauge data as well. Thus the $3k$-dimensional approach is but one part of the Wilson loop diagram story; for another approach, see \cite{Amplituhedronsquared}.

Before proceeding, it is worth noting that while twistor space is complex, throughout this paper (and the literature) one speaks of {\em positivity} in Grasmanninians. This is accomplished via a simplification first introduced by Arkani-Hamed \cite{Arkani-Hamed:2013jha}. Instead of representing the $n$ external fermionic particles as sections of a complex line bundle over a complex twistor space, we work with a projection, or bosonification, onto a $k$-dimensional real vector bundle over a $4$-dimensional real space-time. That is, we associate to each external particle a vector $Z_i \in \R^{4+k}$, which represents a bosonized fermion. We decompose these vectors as $Z_i = (Z_i^\mu, \vec{z}_i)$ with $Z_i^\mu \in \R^4$ encoding the momentum of the particles, and $\vec{z}_i \in \R^k$. Furthermore, the set of vectors $\{Z_1 \ldots Z_n\}$ are chosen such that any ordered subset of $4+k$ vectors defines a positive volume. These vectors comprise the rows of the matrix $\cZ$.

In this paper our goals are twofold. The first is to survey some of the general theory of Wilson loop diagrams, Feynman integrals, and the applications of tools from total positivity (e.g. Le diagrams, vertex-disjoint path systems) to these questions, in a way that is accessible to both mathematicians and physicists. The second goal is to explicitly compute the details of a small and concrete example, namely the case $k=2$, $n=6$. This is the smallest multi-propagator interaction in this theory, yet even this case had not been described in detail until now.

By applying the techniques of total positivity, we verify that all admissible $N^2MHV$ diagrams on 6 points yield positroids cells of dimension 6 (= $3k$); a complete list of the these positroids is given in Table \ref{nameWLDLetable}. With this data in hand, we are able to obtain a complete description of the physically-interesting boundaries of the Wilson loop diagrams (the ``boundary diagrams'') in terms of the positroid cell structure on $\Gr(2,6)$ (see Section \ref{mnemonic} for precise definitions):

\begin{prop}(Proposition \ref{res:bounds})
$B$ is a 5-dimensional positroid cell parametrized by the boundary diagram $\D_{p,v}(W)$ of some admissible Wilson loop diagram $W$ and corresponding to a simple pole of the associated integral $\cI(W)(\cZ_*)$, if and only if $B$ lies on the boundary of 6-dimensional positroid cells in $\Gr(2,6)$ that are parametrized by at least two distinct Wilson loop diagrams.
\end{prop}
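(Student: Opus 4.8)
The plan is to prove both directions by reducing the equivalence to a finite comparison of the $5$-dimensional faces in the positroid decomposition, using the complete list of admissible diagrams in Table \ref{nameWLDLetable}. Since each admissible diagram produces a $6$-dimensional cell and there are only finitely many, their codimension $1$ boundaries assemble into an explicit finite incidence structure, and the Proposition becomes the assertion that two a priori different descriptions of a set of $5$-dimensional cells agree. Concretely I would compute the set $S_{1}$ of boundary diagrams $\D_{p,v}(W)$ that carry a simple pole and the set $S_{2}$ of $5$-dimensional cells bounding at least two top cells, and show $S_{1}=S_{2}$.

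First I would translate the simple-pole condition into combinatorics. The integrand of $\cI(W)(\cZ_{*})$ carries one denominator factor per propagator, and with $k=2$ there are exactly two of them, so a boundary diagram $\D_{p,v}(W)$ can only acquire a pole through the degeneration of the single propagator $p$ as its endpoint slides onto vertex $v$. I expect $\D_{p,v}(W)$ to carry a simple pole precisely when this degeneration makes the denominator factor of $p$ vanish to first order on the resulting $5$-dimensional face while the factor of the second propagator remains nonzero; configurations in which the move instead leaves that factor regular, or forces the second propagator to degenerate as well, do not contribute a simple pole. This is a local condition on $(p,v)$ readable directly from $W$, and tabulating it across all admissible $W$ yields $S_{1}$.

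Second I would compute $S_{2}$ from the positroid data. Each facet of a $6$-dimensional cell is obtained by a prescribed move on its Le diagram (equivalently, a transposition-type modification of its decorated permutation), so for every resulting $5$-dimensional cell $B$ I can record the set of top cells it bounds. Sorting this incidence data separates the $5$-dimensional cells into \emph{exterior} faces, bounding a single top cell, and \emph{interior walls}, bounding two or more; the latter constitute $S_{2}$.

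The matching $S_{1}=S_{2}$ is where the genuine content lies, and I expect it to be the main obstacle. Rather than compare the two tables blindly, I would exhibit the structural mechanism behind their agreement: a simple-pole boundary $\D_{p,v}(W)$ should be the common facet of $W$ and a second diagram $W'$ obtained from $W$ by a local move at $(p,v)$, and conversely every interior wall should arise from such a pair. Making this precise requires showing that the combinatorial simple-pole criterion of the first step is equivalent to the existence of such a neighbour $W'$ --- equivalently, that an interior wall always carries a genuine simple pole while an exterior face never carries one. This last equivalence is precisely the geometric statement that the spurious singularities cancel only on the interior walls of the complex, so once it is in place the Proposition follows and the tables of the first two steps serve to confirm it.
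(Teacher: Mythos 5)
Your overall strategy---reduce the statement to a finite enumeration and compare two lists---is the same as the paper's: its proof of Proposition \ref{res:bounds} is a direct calculation of all $5$-dimensional cells contained in $\cW(2,6)$ and of all boundary diagrams of admissible diagrams. However, your setup has a genuine error. Your set $S_2$ (``interior walls, bounding two or more'' top cells) is \emph{not} the right-hand side of the Proposition. The Proposition counts distinct \emph{diagrams}, not distinct \emph{cells}, and this distinction is exactly where the subtlety lies: the diagram-to-cell map is not injective, since $E_{i,L}$ and $E_{i,R}$ are distinct admissible diagrams parametrizing the same $6$-dimensional cell $\Sigma(E_{i,\bullet})$. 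The cell labelled $*$ in Figure \ref{EPVboundaries} lies on the boundary of only \emph{one} cell of $\cW(2,6)$ (namely $\Sigma(E_{i,\bullet})$; its other neighbour is $N_{i+4}$, which is not a Wilson loop cell), so under your classification it is an ``exterior face'' and excluded from $S_2$; yet it is realized by boundary diagrams of both $E_{i,L}$ and $E_{i,R}$ and corresponds to simple poles, so it belongs to your $S_1$. Hence with your definitions $S_1 = S_2$ is simply false. (If you instead count incidences with all $6$-dimensional positroid cells of $\Gr(2,6)$, including the $N_i$, the equality fails in the other direction: the dashed-gray cells of Figure \ref{EPVboundaries}, e.g.\ the one in Example \ref{eg:bounds3}, bound three top cells but cannot be realized as the cell of any boundary diagram.)

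Separately, the structural mechanism you propose for proving the matching---every interior wall should be the common facet of $W$ and a neighbour $W'$ obtained by a local propagator move at $(p,v)$---is precisely Graphical Prompt \ref{mnemonic}, and the paper shows this is insufficient: in Example \ref{eg:mnemonicVEbound} the cells $\Sigma(V_1)$ and $\Sigma(E_{5,\bullet})$ share a codimension $1$ boundary realized by boundary diagrams of $V_1$, $E_{5,L}$ and $E_{5,R}$ with ``no obvious relationship'' between them, and no such local move exists. This is exactly why the paper's proof is an exhaustive computation rather than a structural argument. Finally, deferring the matching to ``the geometric statement that the spurious singularities cancel only on the interior walls'' inverts the paper's logic: Theorem \ref{boundarycancelthm} is proved \emph{using} Proposition \ref{res:bounds}, so invoking the cancellation to establish the Proposition would be circular.
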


Not every 5-dimensional positroid in $\Gr(2,6)$ corresponds to a cell parametrized by a boundary diagram. This can be seen by direct computatation (in this ``small'' case, there are only 50 cells of dimension 5 to consider), or by computing the homology of the space $\cW(2,6)$ generated by the cells corresponding to the Wilson loop diagrams:

\begin{thm}(Theorem \ref{res:homology})
The homology groups of $\cW(2,6)$ are as follows: \bas H_i(\cW(2,6)) =\begin{cases} \R & \textrm{if } i \in 0, 5; \\ 0 & \textrm{else}.  \end{cases} \eas
\end{thm}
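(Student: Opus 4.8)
The plan is to compute the homology of $\cW(2,6)$ directly from its cellular chain complex, exploiting the fact that the positroid decomposition makes $\cW(2,6)$ a finite regular CW complex. I would first invoke the result that the closure of each positroid cell is a closed ball whose attaching maps give a regular CW structure on the totally nonnegative Grassmannian; since $\cW(2,6)$ is the union of the closures of the $6$-cells attached to the admissible Wilson loop diagrams, it is a closed subcomplex and inherits a regular CW structure. Regularity is the crucial technical point: it guarantees that every incidence number between cells of consecutive dimension is $0$ or $\pm 1$, so the cellular boundary maps $\partial_i \colon C_i \to C_{i-1}$ (with $C_i$ the free $\R$-module on the $i$-cells) are honestly computable from the face poset together with a consistent choice of orientations. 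With $\R$ coefficients the problem then reduces to the linear-algebra identity $\dim H_i = \dim C_i - \operatorname{rank}\partial_i - \operatorname{rank}\partial_{i+1}$, so it suffices to tabulate the number of cells in each dimension and the rank of each boundary matrix.

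Next I would enumerate the cells. The top-dimensional ($6$-dimensional) cells are exactly those listed in Table \ref{nameWLDLetable}, one for each admissible diagram; their lower faces are read off combinatorially, for instance from the Le diagrams or decorated permutations of the corresponding positroids, descending through dimensions $5,4,\dots,0$. Because the case is small this is a terminating bookkeeping exercise, and it produces the integers $\dim C_0,\dots,\dim C_6$. Connectivity of $\cW(2,6)$ — which gives $H_0(\cW(2,6))=\R$ — follows because any two top cells can be joined through a chain of cells sharing codimension-one faces, and $H_6(\cW(2,6))=0$ because $\partial_6$ is injective, which one verifies from the incidence data (no nontrivial $\R$-linear combination of $6$-cells is a cycle).

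The heart of the argument is the determination of $\partial_6$ and $\partial_5$, where Proposition \ref{res:bounds} organizes the computation. That result sorts the $5$-cells into two classes: the \emph{shared} faces, which lie on the boundary of at least two $6$-cells and correspond to the boundary diagrams $\D_{p,v}(W)$ (the physical poles), and the \emph{free} faces, lying in the boundary of a single $6$-cell. In the boundary map $\partial_6$ a shared face appears in two distinct columns with opposite induced orientations and so cancels when one passes to the appropriate cycle; this is precisely the homological shadow of the cancellation of spurious singularities. Carrying out the rank computations for $\partial_5,\partial_4,\partial_3,\partial_2$ then shows that the middle homology vanishes and that $\ker\partial_5/\operatorname{im}\partial_6$ is one-dimensional, generated by the single surviving $5$-cycle assembled from the free boundary faces; this yields $H_5(\cW(2,6))=\R$ and $H_i(\cW(2,6))=0$ for $1\le i\le 4$.

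The main obstacle I anticipate is twofold: first, fixing a globally consistent system of orientations on the cells so that the incidence signs in the boundary matrices are correct — an error here corrupts all the intermediate ranks even though the cell counts are unaffected; and second, the volume of bookkeeping needed to verify that the intermediate boundary maps have exactly the ranks forcing $H_1=\dots=H_4=0$. A cleaner alternative, should the direct rank computation prove unwieldy, would be to build a discrete Morse matching on the face poset of $\cW(2,6)$ that pairs away all cells except one critical $0$-cell and one critical $5$-cell; the Morse inequalities would then deliver the stated homology immediately and would also make transparent the $S^5$-like shape of the answer.
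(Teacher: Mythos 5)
Your overall strategy---give $\cW(2,6)$ the regular CW structure it inherits from the positroid stratification, build the cellular chain complex over $\R$, and compute the ranks of the boundary maps---is exactly the paper's approach: its proof of Theorem~\ref{res:homology} is precisely this computation, carried out by computer (Python/Sympy to extract the face incidences from the basis sets, and the chain complex module of Sage to compute the homology). So there is no methodological divergence; the issue is whether your plan, executed as written, builds the right complex.

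It does not, because of a concrete error in the cell enumeration: you take the $6$-cells to be ``one for each admissible diagram.'' The map from admissible diagrams to positroid cells is not injective---this is one of the central points of the paper (Theorem~\ref{equivthm}): the equivalent diagrams $E_{i,L}$ and $E_{i,R}$ define the \emph{same} positroid cell, so the $21$ diagrams of Table~\ref{nameWLDLetable} yield only $15$ distinct top-dimensional cells. If you set up $C_6$ with $21$ generators, then $\partial_6$ contains pairs of identical columns, its kernel is at least $6$-dimensional, and your computation would output $H_6(\cW(2,6)) \neq 0$, contradicting the very statement you are proving. A secondary flaw: your description of $\partial_6$, in which each shared $5$-face appears in exactly two columns with opposite signs, is too simple for this complex. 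Figure~\ref{EPVboundaries} exhibits $5$-cells lying in the closure of \emph{three} cells of $\cW(2,6)$ (the common face of $\Sigma(V_i)$, $\Sigma(V_{i+2})$, $\Sigma(V_{i+4})$, and the triples involving $\Sigma(P_i)$, $\Sigma(E_j)$, $\Sigma(V_k)$), as well as pairs of $6$-cells sharing \emph{two} distinct $5$-faces ($\Sigma(V_1)$ and $\Sigma(P_1)$); moreover the analytic cancellation of spurious poles in the paper involves nontrivial rational coefficients (Cases 2 and 3 of Theorem~\ref{boundarycancelthm}), not just incidence signs, so it is not simply the ``shadow'' of $\partial_6$. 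Neither point dooms the method---with the correct $15$ cells and honestly computed incidence data the rank computation goes through, as the paper's code confirms---but as stated your enumeration step fails, and the heuristic you lean on for the structure of $\partial_6$ is false in this complex.
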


As noted above, it is conjectured that the singularities appearing in the computation of the scattering amplitude should all cancel out on the codimension 1 boundaries of the cells associated to Wilson loop diagrams. In Section \ref{sec:cancellingpoles} we verify this for $k=2$, $n=6$, as summarized in the following theorem.

\begin{thm} (Theorem \ref{boundarycancelthm})
Let $W$ be an admissible Wilson loop diagram with $2$ propagators and $6$ external particles. Let $\Sigma'$ be any $5$-dimensional boundary of the $6$-dimensional cell $\Sigma(W)$ associated to $W$. Then
\ba \sum_{\begin{subarray}{c} \D_{p,v}(W') = \Sigma', \\ W' \textrm{ admiss. } \end{subarray}} \Res_{\Delta_{p,v}(W') \rightarrow 0}\cI(W')(\cZ_*) =0\;.\ea\end{thm}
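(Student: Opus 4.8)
The plan is to reduce the identity to a finite, boundary-by-boundary check and then to exhibit the cancellation as a controlled sign flip between the diagrams sharing each boundary. Since there are only finitely many admissible diagrams (Table \ref{nameWLDLetable}), each with finitely many codimension-$1$ boundaries, the left-hand sum has only finitely many nonzero terms, and it suffices to fix a single $5$-dimensional $\Sigma'$ and verify that the corresponding terms cancel.

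First I would dispose of the trivial boundaries. If $\Sigma'$ does not support a simple pole of $\cI(W')(\cZ_*)$ for any contributing $W'$, then every residue $\Res_{\Delta_{p,v}(W') \to 0}\cI(W')(\cZ_*)$ vanishes on its own and the sum is zero. The content therefore lies entirely in those $\Sigma'$ that do carry a simple pole. For such $\Sigma'$, Proposition \ref{res:bounds} guarantees that $\Sigma'$ is a boundary of at least two distinct $6$-dimensional cells $\Sigma(W')$, so the sum genuinely has at least two terms and there is something to cancel.

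Next I would compute the residues explicitly. Writing $\cI(W')(\cZ_*)$ as a product of propagator factors, each a ratio of the relevant brackets, the approach to $\Sigma'$ is controlled by the single vanishing minor $\Delta_{p,v}(W')$. I expect each residue to factor as a ``boundary integrand'' supported on $\Sigma'$ times a sign-bearing prefactor determined by how the propagator $p$ of $W'$ meets the hyperplane cutting out $\Sigma'$. The diagrams contributing to a given $\Sigma'$ differ from one another by a local move of a propagator across this hyperplane; each such move should preserve the boundary integrand while reversing the sign of the prefactor. Tracking these signs against the combinatorics of the contributing diagrams (read off from Table \ref{nameWLDLetable}) should then show that the residues cancel — in opposite-sign pairs when exactly two diagrams share $\Sigma'$, and by the analogous signed sum otherwise.

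The main obstacle will be the explicit residue computation together with the sign bookkeeping. Concretely I must verify that (i) the residue at $\Delta_{p,v}(W') \to 0$ factorizes cleanly into a prefactor and a boundary integrand, (ii) this boundary integrand depends only on $\Sigma'$ and not on the ambient diagram $W'$, and (iii) the prefactors of the contributing diagrams carry the signs required for the signed sum to vanish. Step (ii) is the delicate one, since it amounts to showing that the restriction of $\cI(W')$ to the shared boundary is intrinsic to $\Sigma'$; proving this cleanly, rather than by brute-force evaluation over all fifty candidate $5$-cells, is where the real work lies. As a consistency check, I would confirm that the resulting cancellation pattern matches the vanishing of $H_i(\cW(2,6))$ for $1 \le i \le 4$ in Theorem \ref{res:homology}, which encodes the same absence of spurious interior boundaries.
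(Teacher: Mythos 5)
Your overall reduction to a boundary-by-boundary check, and the appeal to Proposition \ref{res:bounds} to isolate which $\Sigma'$ carry genuine content, matches the paper's setup. But the cancellation mechanism you propose --- each residue factors as a boundary integrand intrinsic to $\Sigma'$ times a sign-bearing prefactor, with diagrams related by a local propagator move contributing opposite signs --- only covers the simplest of the three kinds of shared boundaries, and your step (ii) is false as stated for the other two. For the three-way boundary shared by $\Sigma(V_i)$, $\Sigma(V_{i+2})$ and $\Sigma(V_{i-2})$ (two propagators colliding on a common edge), the boundary is cut out by the vanishing of a $2\times 2$ minor of $\cC(W)$, not of a single coordinate, so one must first perform a change of variables making that minor a single parameter before localizing; after doing so, the three residues appear relative to a common reference with prefactors $1$, $-1/(1 - \sigma_{2,i}/\sigma_{1,i})$ and $(\sigma_{2,i}/\sigma_{1,i})/(1 - \sigma_{2,i}/\sigma_{1,i})$ --- rational functions of the external data, not signs --- and the cancellation is precisely that these three prefactors sum to zero. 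No pairing or $\pm 1$ bookkeeping produces this.

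The second failure mode concerns boundaries involving the cells $\Sigma(E_{i,\bullet})$: the equivalent diagrams $E_{i,L} \sim E_{i,R}$ parametrize the \emph{same} $6$-dimensional cell by genuinely different matrices, so ``the restriction of $\cI(W')$ to the shared boundary'' is not intrinsic to $\Sigma'$. The paper must first rewrite both integrals under a common parametrization $\cC_*(E_i)$ of the cell and sum them, establishing $\cI(\cC_*(E_i)) = \cI(E_{i,R}) + \cI(E_{i,L})$, and only this combined integral has residues cancelling against those of $\cI(V_{i+2})$ and $\cI(V_{i+3})$. This is exactly the boundary of Example \ref{eg:mnemonicVEbound}, where the paper stresses that there is no graphical relationship between the three contributing diagrams, so they are in particular not ``related by a local move of a propagator across the hyperplane.'' The paper's proof is organized around these three cases (sign-flip pairs, minor-collapse with rational prefactors, and equivalent-diagram recombination); your proposal as written proves only the first and would stall on the other two, which is where, as you yourself suspected, the real work lies.
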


Even in this small case, we observe several curiosities and obstacles that will carry forward as one adds more propagators and interacting particles. In particular, we find examples of multiple Wilson loop diagrams corresponding to the same positroid cell, as shown in \cite{Wilsonloop}. This leads to positroid cells  sharing boundaries that are apparent from the CW complex and the Le diagrams, but not from the Wilson loop diagrams themselves. The multiplicity of the map from Wilson loop diagrams to the positroid cells leads to many cases of three or more distinct Wilson loop diagrams sharing the same codimension 1 boundary cell with each other. We also observe instances of two different diagrams sharing two distinct boundaries with each other. In short, the CW structure of $\cW(2,6)$ is far more complicated than can be easily inferred from the Wilson loop diagrams themselves, and can only be truly studied with the help of tools from total positivity such as Le diagrams.

Many of the results in this paper are obtained by direct calculation, using objects such as Le diagrams primarily to organize results and observe patterns. This barely scratches the surface of what the combinatorial tools can achieve; in a forthcoming paper, we explore the applications of combinatorics to Wilson loop diagrams for general $k$ and $n$.

This paper is organized as follows. In Section \ref{sec:WLDbackground}, we define Wilson loop diagrams from a purely combinatorial point of view. We outline the tools and results that we use from the theory of total positivity, including positroid cells, Le diagrams, and vertex-disjoint path systems. Finally, we show how to identify the positroid cell associated to a given Wilson loop diagram.

In Section \ref{sec:Feynmanrules} we recall the integrals associated to Wilson loop diagrams and give a brief discussion of how these integrals correspond to the holomorphic Wilson loop. We also include a worked example of evaluating the integral associated to a Wilson loop to illustrate the computations involved, and to motivate later sections, In Section \ref{sec:2,6case} we consider the specifics of $N^2MHV$ diagrams on $6$ points, and give a full description of the positroid cells they define. We use this to compute the homology of the CW complex generated by the Wilson loop diagrams, and to give a geometric interpretation of their physically-interesting boundaries (i.e. the so-called ``graphical boundaries'').

Finally, in Section \ref{sec:cancellingpoles} we show that all singularities of the integrals associated to $N^2MHV$ Wilson loop diagrams on 6 points cancel in the sum over all diagrams. Furthermore, the cancellations for these diagrams all occur, as expected, on the codimension $1$ boundaries of the cells. This behavior has been conjectured for general $k$ and $n$, but had not previously been verified even for $k=2$, $n=6$ due to the difficulties in identifying all shared codimension 1 boundaries from the Wilson loop diagrams alone.

{\bf Acknowledgements}: Work on this paper was begun during the conference ``Total Positivity: A bridge between Representation Theory and Physics'' hosted by the University of Kent, and both authors are grateful for the opportunities and connections sparked by it.

Agarwala is indebted to Paul Heslop and Alastair Stewart for many critical discussions about integrals associated to Wilson loop diagrams. Some of the calculational maneuvers used to compare integrals along boundaries presented in this paper come directly from examples worked out in correspondences and conversations with him.

Fryer gratefully acknowledges the hospitality of the University of Kent mathematics department, who provided a welcoming and supportive environment on several occasions during the writing of this paper, and thanks Matthew Towers for providing an easy-to-code algorithm for testing containment between positroid cells.

\section{Wilson loop diagrams \label{WLDs}}\label{sec:WLDbackground}

This section is an introduction to the combinatorics of Wilson loop diagrams for mathematicians. As such, we omit the derivation of the diagrams and associated integrals, and the precise definitions of the physical objects involved (see existing literature \cite{Adamo:2011cb}), and focus on the diagrams themselves as combinatorial objects.

In Section \ref{sub:notation}, we establish the notation and conventions used in this paper. Section \ref{sub:WLDdef} describes the diagramatics of the Wilson loop diagrams themselves, Section \ref{sub:positroidbackground} provides an overview of total positivity and some related combinatorial objects, and Sections \ref{sub:admissibleWLD} and \ref{sub:WLDpositroids} make precise the relationship between Wilson loop diagrams and total positivity.

\subsection{Notation}\label{sub:notation}

Let $M_{\R}(k,n)$ denote the set of all $k\times n$ real matrices, and $M_{\R,+}(k,n)$ the subset of those matrices whose maximal (i.e. $k\times k$) minors are strictly positive.

We represent the Grassmannian $\G_\R(k,n)$ as the set of all full rank $k\times n$ real matrices modulo the left action of $GL_k(\R)$; intuitively, these are $k\times n$ matrices with linearly independent rows, where two matrices are ``the same'' if we can get from one to the other by performing row operations.

\begin{dfn}A point in $\G_\R(k,n)$ is called {\bf totally nonnegative} if it can be represented by a matrix whose maximal minors (i.e. its $k\times k$ minors) are all nonnegative. Denote the set of all totally nonnegative points by $\Gr(k,n)$. \end{dfn}

Let $[n]$ denote the interval $\{1, 2, \dots, n\}$, and
\[\binom{[n]}{k} = \big\{ I \subset [n] \ \big |\ |I| = k\big\}\]
the set of all $k$-subsets of $[n]$. Clearly $\binom{[n]}{k}$ is in bijection with the set of possible maximal minors of a $k\times n$ matrix. We write $\Delta_I$ for the minor with columns indexed by $I$, or $\Delta_I(A)$ for the value of the minor evaluated on a specific matrix $A$.

Each individual Wilson loop diagram encodes one piece of the calculation whose sum yields the scattering amplitude. The result of this calculation depends on the external data associated to each particle in the interaction.

\begin{dfn}
An external particle of a Wilson loop diagram is denoted $Z \in \R^{4+k}$. Write $Z^j$ to denote the $j^{th}$ component of $Z$ and $Z^{\mu}$ to denote the projection of the section $Z$ onto its twistor component, i.e. onto the first four coordinates.
\end{dfn}

An $n$ point Wilson loop diagram has $n$ external particles and a gauge vector, and we collect this external data in a matrix as follows.

\begin{dfn}\label{def:cZ def} Write $\cZ$ for the $n \times (k+4)$ matrix whose rows consist of the $n$ vectors $Z_1,Z_2, \dots, Z_n$ associated to the external particles. We restrict our attention to collections of particles in general position, i.e. such that $\cZ \in M_{\R,+}(n,4+k)$.  We may choose a {\em gauge vector} $Z_*$ that is a point in a zero section of the line bundle of external particles. Denote by $\cZ_* \in M_{\R}(n+1,4+k)$ the matrix obtained by prepending $Z_*$ above the first row of $\cZ$. \end{dfn}

We refer to these matrices $\cZ_*$ as the matrices of external data. A standard convention throughout this paper is that objects with a star subscript include the gauge vector data, while those without a star subscript omit the gauge vector from consideration. As described in the introduction, there are often reasons for including or omitting the gauge from our calculations.

We also define a notation for certain $4\times 4$ determinants which play an important role in calculations:
\[\langle a b c d \rangle = \det(Z_a^\mu Z_b^\mu Z_c^\mu Z_d^\mu),\]
i.e. the determinant of the $4\times 4$ matrix whose rows correspond to the external particles $Z_a$, $Z_b$, $Z_c$, and $Z_d$, each projected to their first four coordinates.

\subsection{Wilson loop diagrams}\label{sub:WLDdef}

We now discuss the Wilson loop diagrams combinatorially.

\begin{dfn}\label{WLDdef}
A {\bf Wilson loop diagram} is a convex $n$-gon ($n\geq 4$) with $k$ internal (wavy) lines called MHV propagators, each of which connects a pair of distinct polygon edges. See Figure \ref{fig:WLDexample} for an example of a Wilson loop diagram.
\end{dfn}

\begin{figure}[h!]
\bas W =
{\begin{tikzpicture}[rotate=67.5,baseline=(current bounding box.east)] 
\begin{scope}
\drawWLD{8}{1.5} 
\drawnumbers 
\drawprop{2}{0}{4}{0} 
\drawprop{4}{1}{7}{1} 
\drawprop{5}{0}{7}{-1}
\end{scope}
\end{tikzpicture} } \eas
\caption{A Wilson loop diagram with 8 vertices and 3 propagators.}
\label{fig:WLDexample}
\end{figure}
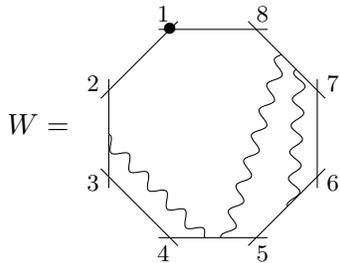

The vertices of the boundary polygon correspond to the external particles involved in the represented interaction. The location of the propagator endpoint on the polygon edge is not significant; if several propagators share an edge, by convention we arrange them in order to minimize the number of crossings.

Note that we have not yet put any restrictions on the behaviour of the propagators, e.g. excluding diagrams with crossing propagators. The underlying physics will impose certain conditions on the propagators; this is described in Section \ref{sub:admissibleWLD} below.

By labeling the vertices of the polygon, we obtain the following equivalent definition of a Wilson loop diagram:
\begin{dfn}\label{WLDdef2}
A Wilson loop diagram (WLD) is comprised of a cyclicly ordered set $(S,<) \subset [n]$ and a set of $k$ pairs of elements of $S$. For ease of reference, we impose the following convention on the propagators: \bas \cP = \{(i_1, j_1), \ldots (i_k, j_k)\ |\ i_r, j_r \in S; i_r <_1 j_r\ \forall r\}\;, \eas
where $<_1$ denotes the total order induced from $<$ by choosing some element of $S$ (usually 1) to be the ``first'' element. We write $W = (\cP,S)$ for the Wilson loop diagram, or (for simplicity) $W = (\cP,n)$ when $S = [n]$.
\end{dfn}

The set $S$ labels the boundary vertices of a convex $n$-gon, starting at the marked vertex and moving counterclockwise. This induces a labelling of the edges, where the $i$th edge connects vertices $i$ and $i+1$ (or the successor of $i$, if $S \neq [n]$). The ordered pair $(i_r,j_r) \in \cP$ corresponds to the propagator connecting edges $i_r$ and $j_r$.

For example, the diagram in Figure \ref{fig:WLDexample} can also be written as $W = (\{(2,4), (4,7), (5,7)\}, 8)$.

\begin{dfn}\label{Propsupportdef}
\begin{enumerate}
\item Given a propagator $p \in \cP$, the {\bf support} of $p$ is the set ${V_p = \{i_p, i_p+1, j_p, j_p+1\}}$, i.e. the endpoints of the two boundary edges that the propagator touches.
\item Given a set of propagators $Q \subset \cP$, define $V_Q = \cup_{q \in Q} V_q$ to be the support of $Q$.
\item For a set of vertices $V \subset [n]$, write $\Prop(V) = \{p \in \cP\ |\  V \cap V_p \neq \emptyset\}$ for the subset of propagators supported by $V$.
\end{enumerate}
\end{dfn}

\begin{dfn}
If $W = (\cP,n)$ is a Wilson loop diagram, and $Q \subset \cP$ a non-empty subset of its propagators, the {\bf subdiagram} of $W$ associated to $Q$ is $W_Q = (Q,V_Q)$.
\end{dfn}

For ease of reference later, we impose an (arbitrary) naming convention for the propagators. Namely, given a Wilson loop diagram $W$ with $k$ propagators, label the propagators as $p_1,\dots, p_k$, in the order they are first encountered while walking counterclockwise around $W$ from the marked vertex. We use this convention unless indicated otherwise.

We now associate two matrices to each Wilson loop diagram, based on the ordering convention above. All work in this paper can be done under any other ordering convention.

\begin{dfn} \label{Cstardef}
Define a $k \times n$ matrix $\cC(W)$ by placing indeterminates $c_{b,a}$ in some of its entries and 0s elsewhere, as follows:
\bas
\cC(W)_{b,a} = \begin{cases} c_{b, a}  & \textrm{if } a\in V_{p_b}; \\ 0 & \textrm {else .} \end{cases}
\eas
We will also want to consider the matrix $\cC_*(W)$ obtained by prepending a column of 1s to $\cC(W)$; in order to maintain column labelling consistency, we label the columns of $\cC_*(W)$ from 0 to $n$. In other words,
\bas
\cC_*(W)_{b, a} = \begin{cases}1 &  \textrm{if } a = 0;  \\ c_{b,a}  & \textrm{if } a\in V_{p_b}; \\ 0 & \textrm {else .} \end{cases}
\eas

In both matrices, the entries $c_{b,a}$ are real indeterminates.
\end{dfn}

\begin{eg}
For the Wilson loop diagram $W = (\{(2,4), (4,7),(5,7)\},8)$ in Figure \ref{fig:WLDexample}, the associated matrices $\cC(W)$ and $\cC_*(W)$ are
\begin{gather*}
\cC(W)  = \begin{pmatrix}
0 & c_{1,2} & c_{1,3} & c_{1,4} & c_{1,5} & 0 & 0 & 0 \\
0 & 0 & 0 & c_{2,4} & c_{2,5} & 0 & c_{2,7} & c_{2,8} \\
0 & 0 & 0 & 0 & c_{3,5} & c_{3,6} & c_{3,7} & c_{3,8}
\end{pmatrix}, \\
\cC_*(W)  = \begin{pmatrix}
1 & 0 & c_{1,2} & c_{1,3} & c_{1,4} & c_{1,5} & 0 & 0 & 0 \\
1 & 0 & 0 & 0 & c_{2,4} & c_{2,5} & 0 & c_{2,7} & c_{2,8} \\
1 & 0 & 0 & 0 & 0 & c_{3,5} & c_{3,6} & c_{3,7} & c_{3,8}
\end{pmatrix}.
\end{gather*}
\end{eg}

\subsection{Positroids and Le diagrams}\label{sub:positroidbackground}

In recent years, the study of totally nonnegative matrices and their associated combinatorics has emerged as an extremely useful tool for the study of scattering amplitudes in SYM $N=4$; see for example \cite{Wilsonloop,Arkani-Hamed:2013jha,cyclichyperplanes}. In this section we give a short introduction to this nonnegative viewpoint, and outline the key definitions and techniques we make use of in this paper.

For the purposes of this paper, a {\bf representable matroid} is any collection $\cB \subseteq \binom{[n]}{k}$ which can be ``represented'' by an element of $\G_{\R}(k,n)$, i.e. there exists some $A \in \G_{\R}(k,n)$ such that for all $B \in \binom{[n]}{k}$,
\[\Delta_B(A) \neq 0 \iff B \in \cB.\]
Following the language of matroid theory, we will refer to the elements of $\cB$ as the {\bf bases} of the matroid.

\begin{dfn}\label{def:positroid}
A {\bf positroid} is a representable matroid $\cB$ which can be represented by an element of $\Gr(k,n)$.
\end{dfn}
This definition induces a stratification of $\Gr(k,n)$ into {\bf positroid cells}: for each positroid $\cB$, define
\[\CS_{\BB} = \left\{ A \in \Gr(k,n) \ | \ \Delta_B (A) \neq 0 \text{ if and only if } B \in \BB\right\},\]
i.e. the set of points in $\Gr(k,n)$ which represent $\cB$.

These definitions were introduced the foundational 2006 preprint \cite{Postnikov} by Postnikov. The positroid stratification of $\Gr(k,n)$ has the structure of a regular CW complex \cite[Theorem 1.1]{GrBall}; in other words, $\Gr(k,n)$ is homeomorphic to a closed ball, the positroids partition $\Gr(k,n)$ into convex open cells of dimension $0 \leq d \leq k(n-k)$, the closure of a cell of dimension $d$ is the union of that cell with finitely many cells of dimension $\leq d-1$, and explicit attaching maps have been constructed, e.g. \cite[Theorem 6.2]{PSW}.

\begin{rmk}\label{boundaries via bases}
If we are only interested in which postroid cells lie on the boundary of a given cell, and not in the precise attaching map data, we may consider the face poset of this complex instead. Specifically: the positroid cell $\CS_{\BB'}$ is contained in the boundary of the cell $\CS_{\BB}$ if and only if the there is an inclusion $\BB' \subset \BB$ of the bases.
\end{rmk}

Positroids have many nice combinatorial properties, see for example \cite{PositroidsNoncrossing,GrBall,Postnikov}. While the matrices in this paper are small enough that it is easy to compute their minors by hand (and we will often use this method when considering the cell complex structure), examining long lists of bases is often unilluminating. We therefore introduce  Le diagrams (Definition \ref{def:Lediagram}) as a convenient method of labelling the positroid cells, and describe the method of vertex-disjoint path systems (Theorem \ref{res:bases from vdps}) for passing between the list of bases defining a positroid and its Le diagram.

\begin{dfn}\label{def:Lediagram}
A \textbf{Le diagram} is a Young diagram where each box contains either a $+$ or a $0$ symbol, subject to the rule that if a box contains a $0$ then at least one of the following holds:
\begin{itemize}
\item Every box to its left (in the same row) also contains a $0$; {\em or}
\item Every box above it (in the same column) also contains a $0$.
\end{itemize}
\end{dfn}
For example,
\[\ytableaushort{+0+0,+0++} \qquad  \ytableaushort{+0++,00} \qquad \ytableaushort{++++,+++}\]
are all Le diagrams, while
\[\ytableaushort{++++,+0+} \qquad \ytableaushort{0+,+0}\]
are not.

The positroids in $\Gr(k,n)$ are in bijection with the Le diagrams that fit inside a $k \times (n-k)$ rectangle, i.e. have at most $k$ rows and at most $n-k$ columns \cite[Theorem 6.5]{Postnikov}.

\begin{rmk}
The dimension of a positroid cell is simply the dimension of the open ball it is homeomorphic to. Given a positroid cell, its dimension can easily be read from the corresponding Le diagram: it is precisely the number of $+$ squares appearing in the Le diagram.
\end{rmk}

There is also an algorithm to reconstruct the set $\BB$ of nonzero minors from the Le diagram, as we now describe. Given a Le diagram $L$, construct its associated graph $\Gamma(L)$ as follows:
\begin{enumerate}
\item Label each step along the southeast border of $L$ with the numbers $1, 2, \dots, n$. (Note that if $L$ has fewer than $k$ rows, or fewer than $(n-k)$ columns, some of these steps will lie on the bounding rectangle; see Figure \ref{fig:gammaL}.)
\item Place a vertex in each $+$ square of $L$, and an additional vertex next to each row and column label.
\item Join any two consecutive vertices in the same row with an arrow directed leftwards, and any two consecutive vertices in the same column with an arrow directed downwards.
\end{enumerate}
See Figure \ref{fig:gammaL} for an example. It follows directly from the Le property that $\Gamma(L)$ is planar, i.e. two arrows can only meet at a vertex.
\begin{figure}[h!]
\begin{center}
\begin{tabular}{cc}

$L = \ytableaushort{+0+0,+0++,0+}$, \qquad \qquad

&

$\Gamma(L) =$ \begin{tikzpicture}[baseline=(current bounding box.east)]
\draw[dotted, gray] (2,0) -- ++(3,0) -- ++(0,3) -- ++(-1,0);
\draw[gray] (0,0) grid +(2,1);
\draw[gray] (0,1) grid +(4,1);
\draw[gray] (0,2) grid +(4,1);

\node (A) at (0.5,2.5) {$\bullet$};
\node (B) at (2.5,2.5) {$\bullet$};
\node (C) at (0.5,1.5) {$\bullet$};
\node (D) at (2.5,1.5) {$\bullet$};
\node (E) at (3.5,1.5) {$\bullet$};
\node (F) at (1.5,0.5) {$\bullet$};

\node (0) at (4.6,2.75) {$1$};
\node (1) at (4.25,2.5) {$2$};
\node (2) at (4.25,1.5) {$3$};
\node (3) at (3.5,0.75) {$4$};
\node (4) at (2.5,0.75) {$5$};
\node (5) at (2.25,0.5) {$6$};
\node (6) at (1.5,-0.5) {$7$};
\node (7) at (0.5,-0.5) {$8$};

\draw [->] (B) to (A);
\draw [->] (E) to (D);
\draw [->] (D) to (C);
\draw [->] (A) to (C);
\draw [->] (B) to (D);

\draw [->] (1) to (B);
\draw [->] (2) to (E);
\draw [->] (E) to (3);
\draw [->] (D) to (4);
\draw [->] (5) to (F);
\draw [->] (F) to (6);
\draw [->] (C) to (7);

\end{tikzpicture}
\end{tabular}
\end{center}
\caption{Constructing $\Gamma(L)$ from $L$ in $\Gr(3,8)$.}
\label{fig:gammaL}
\end{figure}

Write $S$ for the set of source vertices in the diagram (i.e. the vertices attached to row labels), and $T$ for the set of target vertices (column labels). A \textbf{path} in $\Gamma(L)$ is any path from a vertex $s \in S$ to a vertex $t \in T$ along these directed arrows.  Two paths are called \textbf{vertex-disjoint} if they do not have any vertices in common.

Let $I = \{i_1, \dots, i_r\}$ be a subset of the row labels, and $J = \{j_1, \dots, j_r\}$ a subset of the column vertices. A \textbf{vertex-disjoint path system} for $(I,J)$ is a collection of paths
\[i_1 \rightarrow j_1,\ i_2 \rightarrow j_2,\ \ldots, \ i_r \rightarrow j_r,\]
which are pairwise vertex-disjoint.

\begin{thm}\label{res:bases from vdps}
Let $L$ be a Le diagram specifying a cell in $\Gr(k,n)$, and construct its graph $\Gamma(L)$ as above.  Then $B$ is a basis for the positroid corresponding to $L$ if and only if $|B| = k$ and $(S \backslash B, T \cap B)$ admits a vertex-disjoint path system.
\end{thm}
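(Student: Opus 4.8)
The plan is to reduce the statement to the Lindström--Gessel--Viennot (LGV) lemma by way of the standard path-matrix parametrization of the positroid cell. First I would invoke Postnikov's network description of the cell attached to $L$ (citable from \cite{Postnikov,PSW}): the cell is represented by the \emph{path matrix} $A = A(\Gamma(L))$ with generic positive weights on the arrows of $\Gamma(L)$, whose rows are indexed by the sources $S$ and whose columns are indexed by $[n] = S \sqcup T$. Concretely, for $s \in S$ the column $s$ is the standard basis vector $e_s$, while for a target $t \in T$ the entry is the weighted path sum $A_{s,t} = \sum_{\gamma\colon s \to t} \mathrm{wt}(\gamma)$. Since $B$ is a basis of the positroid exactly when $\Delta_B(A) \neq 0$, the theorem becomes a statement about when this maximal minor vanishes.

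Next I would fix a $k$-subset $B \subseteq [n]$ and analyse $\Delta_B(A)$. Writing $B = (B \cap S) \sqcup (B \cap T)$, the columns of the $k \times k$ submatrix indexed by $B \cap S$ are standard basis vectors, so a cofactor expansion along them eliminates the rows indexed by $B \cap S$ and reduces $\Delta_B(A)$, up to a sign, to the determinant of the submatrix $A[\,S \setminus B,\, T \cap B\,]$ with rows $S \setminus B$ and columns $T \cap B$. The counting $|S| = k$, $|T| = n-k$ gives $|S \setminus B| = k - |B \cap S| = |T \cap B|$, so this submatrix is square, and its entries are exactly the path sums $A_{s,t}$. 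Applying the LGV lemma then expresses $\det A[\,S\setminus B,\,T\cap B\,]$ as a signed sum, over vertex-disjoint path systems from $S \setminus B$ to $T \cap B$, of the products of the corresponding edge weights.

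The crux of the argument, and the step I expect to be the main obstacle, is controlling the signs in this LGV sum so that cancellation is impossible. Here I would use the planarity of $\Gamma(L)$ (which follows from the Le property, as already observed) together with the fact that the sources and targets sit in convex position along the southeast border of $L$: any vertex-disjoint family of paths between a boundary set $S \setminus B$ and a boundary set $T \cap B$ must realise the \emph{unique} non-crossing bijection between them, so every surviving term in the LGV expansion carries the same permutation sign. Combined with the positivity of the weights, this makes $\det A[\,S\setminus B,\,T\cap B\,]$ subtraction-free after a single global sign, hence nonzero precisely when at least one vertex-disjoint path system exists.

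Putting these together, $\Delta_B(A) \neq 0$ if and only if $\det A[\,S\setminus B,\,T\cap B\,] \neq 0$, which by the sign analysis holds if and only if $(S \setminus B,\, T \cap B)$ admits a vertex-disjoint path system; since $B$ is a basis iff $\Delta_B(A) \neq 0$, this is exactly the claim. The routine parts are the cofactor bookkeeping and the (citable) verification that the weighted network parametrizes the correct cell, whereas the genuinely essential input is the planarity-driven sign control that guarantees a single path system already forces non-vanishing.
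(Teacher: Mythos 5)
Your proposal is correct in substance, but it takes a genuinely different route from the paper: the paper's entire proof of Theorem~\ref{res:bases from vdps} is a one-line citation, ``Combine \cite[Theorem 5.6]{MR2774624} and \cite[Theorem 4.2]{GLL2}'', delegating everything to the literature on Cauchon/Le diagrams and totally nonnegative cells. You instead reconstruct the argument from scratch: Postnikov's path-matrix parametrization of the cell attached to $L$, cofactor reduction of $\Delta_B$ to the square submatrix $A[\,S\setminus B,\,T\cap B\,]$, and the Lindstr\"om--Gessel--Viennot lemma with a sign-coherence argument. This is essentially the proof that lives \emph{inside} the results the paper cites (Casteels' theorem is itself a vertex-disjoint-path-system statement), so the two routes agree at the deeper level; what your version buys is a self-contained argument accessible without the quantum-matrix literature, while the paper's version buys brevity and sidesteps all weight and sign bookkeeping.

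One step of your sign analysis is, however, stated too loosely to stand as written. Planarity plus ``convex position of the terminals on the boundary'' does \emph{not} by itself make the non-crossing bijection between $S\setminus B$ and $T\cap B$ unique: if the border order of the terminals is $s_1 < t_1 < s_2 < t_2$, then both the pairing $\{s_1\to t_1,\ s_2\to t_2\}$ (disjoint arcs) and the nested pairing $\{s_1\to t_2,\ s_2\to t_1\}$ are non-crossing, and they carry opposite permutation signs, so your argument as stated does not yet exclude cancellation in the LGV sum. What rescues it is the directedness of $\Gamma(L)$: every arrow points left or down, so a path starting at a source $s$ (the right end of its row) can only terminate at a sink $t$ (the bottom of a column) whose border label satisfies $s < t$, since the bottom of the target column must lie weakly below the source's row. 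Hence $A_{s,t}=0$ whenever $t<s$, which kills the nested pairing above; combining this triangularity with the planarity constraint (crossing pairs force shared vertices) does yield a unique bijection realizable by vertex-disjoint paths, and therefore a common sign for all surviving LGV terms. So the crux you identified is the right one, but its justification needs the monotonicity of the paths in $\Gamma(L)$, not merely the convex position of the boundary terminals; with that observation inserted, your proof goes through.
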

\begin{proof}
Combine \cite[Theorem 5.6]{MR2774624} and \cite[Theorem 4.2]{GLL2}.
\end{proof}

\begin{eg}
For the Le diagram in Figure \ref{fig:gammaL}, we have $S = \{2,3,6\}$ and $T = \{1,4,5,7,8\}$. Then for example, $B = \{4,5,6\}$ is a basis for this positroid, since
\[S \backslash B = \{2,3\}, \quad T \cap B = \{4,5\},\]
and there is a vertex-disjoint path system $2\rightarrow 5$, $3 \rightarrow 4$ in $\Gamma(L)$. On the other hand, $\{3,4,5\}$ is {\em not} a basis, since there is no vertex-disjoint path system from $\{2,6\}$ to $\{4,5\}$ in $\Gamma(L)$.
\end{eg}

\begin{ntn}
We will often drop the set notation for bases where this will not cause any confusion, i.e. write $456$ for $\{4,5,6\}$.
\end{ntn}

For small examples in particular, vertex-disjoint path systems provide an effective way of listing all of the bases of a positroid without writing down an explicit matrix belonging to that positroid. Conversely, given a list of bases for a positroid, we can use Theorem \ref{res:bases from vdps} to reconstruct the corresponding Le diagram; this allows us to start with a Wilson Loop diagram and construct its Le diagram via the matrix $\cC(W)$.

Recall that the cell structure on the positroids of $\Gr(k,n)$ can be seen in its face poset (ordered by inclusion). It is important to emphasize that these inclusions {\em cannot} always be read directly from the Le diagrams, as the following example demonstrates.

\begin{eg}\label{eg:badLebehavior} In $\Gr(2,4)$, consider
\[ A_1 = \begin{pmatrix}1 & 0 & 0 & -a \\ 0 & 1 & 0 & b \end{pmatrix}, \quad A_2 = \begin{pmatrix}1 & 0 & -c & -c \\ 0 & 1 & d & d\end{pmatrix}, \quad a, b,c, d >  0\]
$A_1$ belongs to the positroid $\CS_{\BB_1}$ with $\BB_1 = \{12,14,24\}$, while $A_2$ belongs to $\CS_{\BB_2}$ with $\BB_2 = \{12,13,14,23,24\}$. So we certainly have $\BB_1 \subseteq \BB_2$, but there is no obvious relation between the Le diagrams:
\[L(\CS_{\BB_1}) = \ytableaushort{+0,+0}, \qquad L(\CS_{\BB_2}) = \ytableaushort{0+,++}.\]
\end{eg}

Thus we use Le diagrams as a convenient method of labelling positroids and calculating their dimension, and lists of bases when examining the cell structure.

\subsection{Admissible Wilson loop diagrams}\label{sub:admissibleWLD}

Thus far, we have not put any conditions on the behavior of the Wilson loop diagrams. It turns out that restricting our attention to the physically interesting ones (the {\em admissible} diagrams, Definition \ref{admisWLD}) yields matrices which also have interesting positivity conditions.

Fix a matrix of external data $\cZ_*$, as described in Section \ref{sub:notation}. By Cramer's rule, each matrix $\cC(W)$ defines a unique kernel of the matrix $\cZ^\mu_*$. In \cite{Wilsonloop}, the first author and Amat show that solving the equation \begin{equation}\label{eq:CZ = 0}\cC_*(W) \cdot \cZ^\mu_* = \mathbf{0}\end{equation} for the entries $c_{b,a}$ of $\cC_*(W)$ yields expressions that can be written in terms of certain minors of $\cZ^\mu_*$, as we now describe. (The significance of equation \eqref{eq:CZ = 0} is discussed following Theorem \ref{admisWLDSarepositroids}.)

\begin{dfn}\label{sigmadfn}
For a propagator $p_b = (i_b, j_b)$, and supporting vertex $a \in V_{p_b}$, define $\sigma_{b,a}$ to be the determinant formed by replacing the vector $Z_a^\mu$ in the determinant $\langle i_b\ (i_b+1)\ j_b\ (j_b+1)\rangle$ with the gauge vector $Z_*^\mu$, i.e.
\bas  \sigma_{b,a} = \langle  Z^\mu_{i_b} \ldots Z^\mu_* \widehat{Z^\mu_a} \ldots Z^\mu_{j_b+1} \rangle \;. \eas  \end{dfn}
Let $\cC_*(W)(\cZ_*)$ be the matrix obtained from $\cC_*(W)$ by replacing each $c_{b,a}$ with \begin{equation}\label{Cmatrixentries}\frac{\sigma_{b,a}}{\langle i_b\ (i_b+1)\ j_b\ (j_b+1)\rangle},\quad 1 \leq b \leq k, \quad a \in V_{p_b}. \end{equation}
It now follows from Cramer's rule that $C_*(W)(\cZ_*) \cdot \cZ_*^\mu = \mathbf{0}$ \cite[Equation (4)]{Wilsonloop}. Thus, $\cC_*(W)(\cZ_*)$ is exactly the matrix that solves equation \eqref{eq:CZ = 0}, for fixed external data $\cZ_*$. See Example \ref{complicateddiagrameg} for a worked example of computing $\cC_*(W)(\cZ_*)$.

We are interested in Wilson loop diagrams that define positroid cells of $\Gr(k,n)$. To this end, we give the following definition.

\begin{dfn} \label{admisWLD}
A Wilson loop diagram $W = (\cP, n)$ is called {\bf admissible} if it satisfies the following conditions:
\begin{enumerate}
\item $n \geq |\cP| +4$;
\item There does not exist a (non-empty) subset of propagators $Q \subseteq \cP$ such that $|V_Q| < |Q| + 3$;
\item $W$ has no crossing propagators.
\end{enumerate}
\end{dfn}
It is more illuminating to see examples of Wilson loop diagrams which {\em fail} to be admissible; some examples are listed in Figure \ref{fig:nonadmissibleWLD}.

\begin{figure}[h]
\begin{center}
\begin{tabular}{ccccccc}
\begin{tikzpicture}[rotate=60,baseline=(current bounding box.east)]
	\begin{scope}
	\drawWLD{6}{1.5}
	\drawprop{1}{-1}{6}{-2}
	\drawprop{1}{-0.5}{5}{0}
	\drawprop{1}{0}{4}{0}
	\drawprop{1}{0.5}{3}{0}
		\end{scope}
	\end{tikzpicture}

& \quad &

\begin{tikzpicture}[rotate=60,baseline=(current bounding box.east)]
	\begin{scope}
	\drawWLD{6}{1.5}
	\drawprop{1}{-1}{4}{1}
	\drawprop{1}{1}{4}{-1}
		\end{scope}
	\end{tikzpicture}

& \quad &

\begin{tikzpicture}[rotate=60,baseline=(current bounding box.east)]
	\begin{scope}
	\drawWLD{6}{1.5}
	\drawprop{2}{-1}{3}{1}
		\end{scope}
	\end{tikzpicture}

& \quad &

\begin{tikzpicture}[rotate=60,baseline=(current bounding box.east)]
	\begin{scope}
	\drawWLD{6}{1.5}
	\drawprop{1}{0}{4}{0}
	\drawprop{2}{0}{5}{0}
		\end{scope}
	\end{tikzpicture}

\end{tabular}
\end{center}
\caption{Examples of diagrams which {\em fail} to be admissible.}
\label{fig:nonadmissibleWLD}
\end{figure}
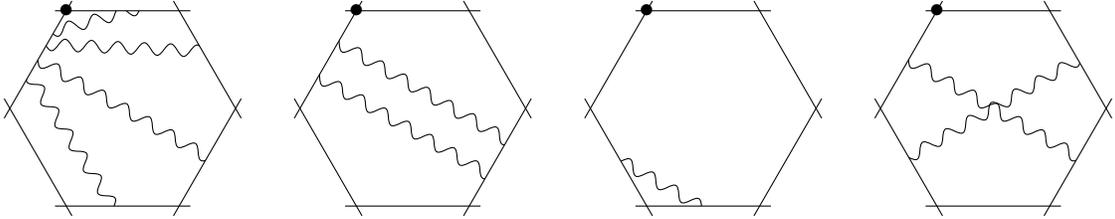

The following result makes precise the relationship between the matrices associated to admissible Wilson loop diagrams and total positivity.

\begin{thm}\label{admisWLDSarepositroids} \cite[Theorem 1.14, Theorem 3.41]{Wilsonloop}
Let $W$ be an admissible Wilson loop diagram with $k$ propagators on $n$ vertices, and $\cZ_*$ a matrix of external data. Then:
\begin{enumerate}
\item $\cC_*(W)(\cZ_*)$ is a matrix of full rank for any choice of external data $\cZ_*$, i.e. it lies in {$\mathbb{G}_{\R}(k,n+1)$}.
\item $\cC(W)$ defines a positroid (in the sense of Definition \ref{def:positroid}).
\end{enumerate}
\end{thm}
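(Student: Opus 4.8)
The result is attributed to \cite[Theorem 1.14, Theorem 3.41]{Wilsonloop}; here is how I would approach it from scratch. Both parts rest on two combinatorial consequences of admissibility. The non-crossing condition (3) forces the supports $V_{p_b}$ to form a laminar family in the cyclic order, and condition (2), the inequality $|V_Q| \geq |Q| + 3$ for every non-empty $Q \subseteq \cP$, is a strengthened Hall condition: in particular $|V_Q| \geq |Q|$ for all $Q$, so by Hall's marriage theorem the supports admit a system of distinct representatives, i.e.\ we may choose distinct columns $a_1 \in V_{p_1}, \dots, a_k \in V_{p_k}$. In the $k \times k$ submatrix of $\cC(W)$ on columns $\{a_1, \dots, a_k\}$, the monomial $\prod_b c_{b,a_b}$ arises only from the identity permutation (since the $a_b$ are distinct and the $c_{b,a}$ are distinct indeterminates), so it survives with coefficient $\pm 1$ and the corresponding maximal minor is a nonzero polynomial. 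This already gives full rank generically; the real work is to upgrade ``generically'' to ``for every choice of positive external data.''

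For that I would first fix a sign convention, absorbing the sign of each $\sigma_{b,a}$ into $c_{b,a}$ so that, after evaluation on $\cZ_*$, every nonzero entry of $\cC_*(W)(\cZ_*)$ carries a prescribed sign. The aim is a \emph{no-cancellation principle}: expand each maximal Plücker coordinate $\Delta_B(\cC_*(W)(\cZ_*))$ by a Laplace / Cauchy--Binet expansion into a sum of products of the determinants $\sigma_{b,a}$ and $\langle i_b\,(i_b{+}1)\,j_b\,(j_b{+}1)\rangle$, and then show, using that every ordered $(4+k)$-subset of rows of $\cZ$ has positive volume, that all surviving terms share a common sign. Once no cancellation can occur, the vanishing pattern of the minors depends only on the support pattern of $\cC(W)$ (equivalently, on which transversals exist), hence only on $W$ and not on the particular positive $\cZ_*$. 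Full rank (part 1) then follows, because the minor built from the system of distinct representatives has a definite sign and is generically nonzero, so it is in fact strictly positive.

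Part (2) falls out of the same analysis. The no-cancellation principle shows that the basis set $\{B : \Delta_B(\cC(W)(\cZ_*)) \neq 0\}$ is independent of the (positive) external data, so it defines a single matroid; and since the sign convention makes $\cC(W)(\cZ_*)$ a totally nonnegative matrix realizing it, this matroid is a positroid in the sense of Definition \ref{def:positroid}. An appealing alternative to the global sign bookkeeping is to exploit the laminar structure directly: peel off an innermost propagator and induct on $k$, using the laminar structure and condition (2) to keep the supports of the remaining propagators under control, thereby localizing the sign analysis to one propagator at each step.

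The main obstacle is precisely the sign analysis of the second paragraph: proving that the Cauchy--Binet expansion of each Plücker coordinate has no sign cancellations. The two combinatorial hypotheses are exactly what is needed. The non-crossing condition (3) keeps the relative position of the inserted gauge vector $Z_*$ consistent across the determinants $\sigma_{b,a}$, so the signs line up, while condition (2) rules out the degenerate coincidences of supports that would otherwise force spurious vanishing. Making these two conditions interact correctly, and pinning down the global sign once and for all, is the crux of the argument.
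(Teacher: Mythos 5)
There is nothing in this paper to compare your sketch against: Theorem \ref{admisWLDSarepositroids} is not proved here but imported wholesale from \cite{Wilsonloop} (Theorems 1.14 and 3.41 of that paper). Judged on its own, your proposal has a genuine gap: both parts rest entirely on the ``no-cancellation principle'' of your second paragraph, and you never prove it --- you explicitly defer it as ``the crux.'' The step you do carry out (condition (2) implies Hall's condition, so a transversal $a_1 \in V_{p_1}, \dots, a_k \in V_{p_k}$ of distinct columns exists, and the corresponding $k \times k$ minor of $\cC(W)$ is a nonzero polynomial) gives full rank only for \emph{generic} parameter values. That is much weaker than part 1, which asserts full rank of the evaluated matrix $\cC_*(W)(\cZ_*)$ for \emph{every} admissible choice of data, and it says nothing at all about part 2.

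Moreover, the plan you outline for the crux would fail as stated, for three concrete reasons. First, ``absorbing the sign of each $\sigma_{b,a}$ into $c_{b,a}$'' is not a legitimate operation: the evaluated matrix $\cC_*(W)(\cZ_*)$ is determined by the data, and flipping signs of individual entries produces a different matrix which no longer satisfies $\cC_*(W)(\cZ_*) \cdot \cZ_*^\mu = \mathbf{0}$ nor represents the same point of the Grassmannian; sign changes can only enter through whole-row or whole-column rescalings. Second, the signs of the $\sigma_{b,a}$ are not governed by the positivity of $\cZ$ at all: each $\sigma_{b,a}$ is a $4 \times 4$ determinant involving the gauge vector $Z_*^\mu$, which is an arbitrary choice of zero section subject to no positivity constraint, so the sign pattern of the entries of $\cC_*(W)(\cZ_*)$ genuinely varies with the data and no fixed normalization makes this matrix totally nonnegative for all $\cZ_*$. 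Relatedly, part 2 does not claim the evaluated matrix is totally nonnegative; it claims the matroid of the indeterminate matrix $\cC(W)$ admits \emph{some} totally nonnegative representative, which must come from a constrained choice of parameters --- note that taking all $c_{b,a}>0$ already fails, since for two propagators supported on a common pair of columns $\{j, j+1\}$ the minor $c_{1,j}c_{2,j+1} - c_{1,j+1}c_{2,j}$ can have either sign. Third, even granting sign coherence of a Cauchy--Binet expansion, your inference that ``the vanishing pattern depends only on the support pattern'' is false: individual factors $\sigma_{b,a}$ do vanish for legitimate data (e.g.\ when $Z_*^\mu$ lies in the span of three of the $Z_i^\mu$; Example \ref{complicateddiagrameg} explicitly allows this), so with all terms of one sign one must still exhibit, for every choice of data, at least one nonvanishing term. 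These are exactly the difficulties the cited proof in \cite{Wilsonloop} has to confront, and your sketch leaves all of them open.
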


Each admissible Wilson loop diagram thus corresponds to a subspace of $\G_{\R}(k, n+1)$: the space parametrized by the matrix $\cC_*(W)$ (recall Definition \ref{Cstardef}).

Write $\cW_*(k,n) \subset \G_{\R}(k,n+1)$ for the subspace of $\G_{\R}(k,n+1)$ parametrized by all of the admissible Wilson loop diagrams. Any matrix of external data $\cZ_*$ induces a projection map from $\cW_*(k,n)$ to a subspace of $\Gr(k,k+4)$:
\bas \cZ_* : \cW_*(k,n) & \rightarrow \Gr(k, k+4) \\ \cC_*(W) & \mapsto \cC_*(W) \cdot \cZ_* \eas
The integrals associated to the admissible Wilson loop diagrams (introduced in Section \ref{sec:Feynmanrules}) define volumes on the spaces of the form $\cC_*(W)\cdot \cZ_*$, and it is these volumes which yield the scattering amplitude. The volume associated to the space $\cC_*(W) \cdot \cZ_*$ is defined by evaluating a rational function at the hyperplane that satisfies \eqref{eq:CZ = 0}.

However, in this paper we focus not on the image $\cZ_*(\cW_*(k,n))$ but on the positive geometry of the space parametrized by the Wilson loop diagrams $C(W)$. In later sections, we show that many of the problems that arise in evaluating and interpreting the integrals can be resolved by close inspection of this preimage.

\begin{rmk}
The space $\cZ_*(\cW_*(k,n))$ is conjectured to be the Amplituhedron.  See also \cite{Amplituhedronsquared} for a connection between $\cW_*(k,n)$ and a geometric space called the Amplituhedron squared.
\end{rmk}

For the rest of this paper, {\bf we consider admissible Wilson loop diagrams only}.

\subsection{Wilson loop diagrams and positroids}\label{sub:WLDpositroids}

In this paper, we are interested in the geometry defined by the admissible Wilson loop diagrams. In other words, we wish to study the subspace of $\Gr(k,n)$ parametrized by matrices of the form $\cC(W)$, making use of the CW structure coming from the positroid stratification of $\Gr(k,n)$. The underlying physical justification for taking this approach is given in Section \ref{sec:Feynmanrules}; for now, we simply note that the volume forms associated to individual  Wilson loop diagrams have singularities which are conjectured to cancel out in the final sum, and in order to verify this conjecture it suffices to study their behavior on the boundaries of the positroids appearing in Theorem \ref{admisWLDSarepositroids}.

In this section we introduce the definitions and notation which will allow us to make precise the object whose geometry we need to understand. In Sections \ref{sec:2,6case} and \ref{sec:cancellingpoles}, we verify that the singularities do indeed all cancel out in the case $k=2$, $n=6$.

\begin{dfn}
Let $W$ be an admissible Wilson loop diagram, and $\cB(W)$ the bases set of the positroid defined by $\cC(W)$. Define $\Sigma(W)$ to be the corresponding closure of the positroid cell in $\Gr(k,n)$, i.e.
\[\Sigma(W) =\overline{ \{A \in \Gr(k,n) \ |\ \Delta_I(A) \neq 0 \text{ iff } I \in \cB(W)\}}.\]
\end{dfn}

\begin{dfn} \label{dfn:WLDcomplex}
Define $\cW(k,n)$ to be the subspace of $\Gr(k,n)$ consisting of the union of the closures of the positroid cells associated to admissible diagrams, i.e.
\bas \cW(k,n) = \bigcup_{\substack{W \; admiss.\\ n\; verts, k\; props}} \overline{\Sigma(W)}. \eas
\end{dfn}

For the remainder of this paper, we restrict our attention to the geometry of $\cW(k,n)$.

It is important to note that the map from admissible Wilson loop diagrams to positroid cells is not injective.  It is certainly well-defined: by Theorem \ref{admisWLDSarepositroids}, restricting our domain to admissible diagrams ensures that we do always land in a positroid cell, but it is still possible for two different admissible Wilson loop diagrams to give rise to the same positroid cell. As such, the number of positroid cells involved in definining $\cW(k,n)$ is \emph{strictly fewer} than the number of possible admissible Wilson loop diagrams with $k$ propagators and $n$ vertices.

In order to identify precisely when this happens, we recall further notation from \cite{Wilsonloop}.

\begin{dfn} \label{exactdfn}
If $W$ is an admissible Wilson loop diagram with a non-empty set of propagators $Q \subseteq \cP$ such that $|V_Q| = |Q| +3$, then $(Q, V_Q)$ is an {\bf exact subdiagram} of $W$.
\end{dfn}

This allows us to define an equivalence relation on admissible Wilson loop diagrams.

\begin{dfn} \label{equivdfn}
Two admissible diagrams $W = (\cP, n)$ and $W'=(\cP', n)$ are equivalent if
\begin{enumerate}
\item There exists $Q \subseteq \cP \cap \cP'$ such that we can write $\cP = Q \sqcup R$, $\cP' = Q \sqcup R'$.
\item $V_R = V_{R'}$.
\item The subdiagrams $(R, V_R)$ and $(R',V_R)$ are both unions of exact subdiagrams.
\end{enumerate}
\end{dfn}

In other words, two admissible diagrams $W$ and $W'$ are equivalent if they differ only by unions of exact subdiagrams supported on the same set of vertices. See Figure \ref{fig:equiv diagrams ex} for an example of equivalent diagrams.

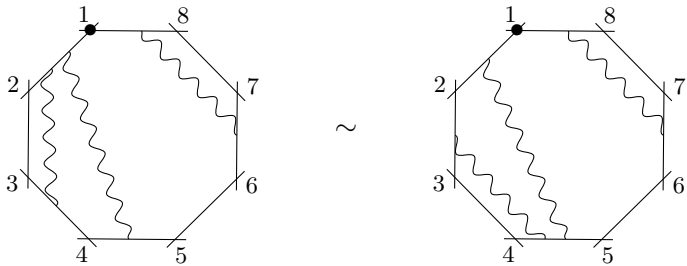
\begin{figure}[h!]
\[\begin{tikzpicture}[rotate=67,baseline=(current bounding box.east)]
	\begin{scope}
	\drawWLD{8}{1.5}
	\drawnumbers
	\drawprop{1}{1}{3}{0}
	\drawprop{1}{-1}{4}{0}
	\drawprop{6}{0}{8}{0}
		\end{scope}
	\end{tikzpicture}
\qquad \sim \qquad
\begin{tikzpicture}[rotate=67,baseline=(current bounding box.east)]
	\begin{scope}
	\drawWLD{8}{1.5}
	\drawnumbers
	\drawprop{1}{0}{4}{1}
	\drawprop{2}{0}{4}{-1}
	\drawprop{6}{0}{8}{0}
		\end{scope}
	\end{tikzpicture}
\]
\caption{An example of two equivalent diagrams with 3 propagators on 8 vertices.}
\label{fig:equiv diagrams ex}
\end{figure}

\begin{thm} \label{equivthm} \cite[Theorem 1.18]{Wilsonloop}
If $W = (\cP, n)$ and $W'=(\cP', n)$ are two  equivalent admissible Wilson  loop diagrams, then $\cC(W)$ and $\cC(W')$ define the same positroid.
\end{thm}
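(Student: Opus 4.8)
The plan is to reduce the statement ``$\cC(W)$ and $\cC(W')$ define the same positroid'' to the equality of their basis sets $\cB(W) = \cB(W')$, and then to give a purely combinatorial description of these bases. Fix $B = \{a_1 < \dots < a_k\} \in \binom{[n]}{k}$ and expand the maximal minor $\Delta_B(\cC(W))$. Each nonzero monomial in this expansion corresponds to a bijection $\beta$ from the propagators $\{p_1,\dots,p_k\}$ to $B$ with $\beta(p_b) \in V_{p_b}$ for all $b$; since the entries $c_{b,a}$ are \emph{independent} indeterminates, distinct bijections produce distinct monomials, so no cancellation can occur. Hence $\Delta_B(\cC(W)) \not\equiv 0$ if and only if such a bijection exists, i.e. if and only if $B$ contains a system of distinct representatives for the supports $(V_{p})_{p \in \cP}$. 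By Hall's theorem this is equivalent to
\[ |B \cap V_P| \ge |P| \qquad \text{for every } P \subseteq \cP. \]
In matroid language, $\cB(W)$ is exactly the set of bases of the transversal matroid $M[\cP]$ presented by the family $(V_p)_{p \in \cP}$ on the ground set $[n]$.

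Next I would exploit the decomposition $\cP = Q \sqcup R$ and $\cP' = Q \sqcup R'$. A partial transversal of the concatenated family $(V_p)_{p \in Q} \cup (V_\rho)_{\rho \in R}$ is precisely a disjoint union of a partial transversal of $Q$ and one of $R$, so $M[\cP]$ is the \emph{matroid union} $M[Q] \vee M[R]$, and likewise $M[\cP'] = M[Q] \vee M[R']$. Because matroid union is a well-defined operation on its input matroids, it now suffices to prove that $M[R] = M[R']$ as matroids on $[n]$. Since $V_R = V_{R'}$, the two presentations have the same loops (the vertices outside this common support), so the claim reduces to the equality of the two transversal matroids on the common vertex set $V_R$.

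The key lemma is that an \emph{exact} subdiagram gives a uniform matroid. Concretely, if $(R,U)$ is exact, so $U = V_R$ with $|U| = |R| + 3$, and admissibility gives $|V_P| \ge |P| + 3$ for every nonempty $P \subseteq R$, then $M[R]$ is the uniform matroid $U_{|R|,\,|R|+3}$. Indeed, for any $T \subseteq U$ with $|T| = |R|$ and any $P \subseteq R$ we have $|V_P \cap T| \ge |V_P| - |U \setminus T| \ge (|P|+3) - 3 = |P|$, so Hall's condition holds and $T$ is a basis; as every $|R|$-subset of $U$ is thus a basis, $M[R]$ is uniform. In particular, whenever $R$ and $R'$ are each exact as a whole on the same support $U$, both matroids equal $U_{|R|,|U|}$ and we are done.

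The main obstacle is the general case in which $(R,V_R)$ and $(R',V_R)$ are \emph{unions} of exact subdiagrams that need not be exact as a whole (for instance two exact pieces on disjoint arcs of the polygon). Writing $R = R_1 \cup \dots \cup R_m$ with each $(R_i, V_{R_i})$ exact, the same union-of-transversal-matroids argument gives $M[R] = U_{|R_1|,|V_{R_1}|} \vee \dots \vee U_{|R_m|,|V_{R_m}|}$, a matroid union of uniform matroids whose supports are the $V_{R_i}$. To conclude $M[R] = M[R']$ I must therefore show that the \emph{collection of supports} of the exact pieces is determined by $V_R$ alone, independently of the particular propagators of $R$ or $R'$. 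This is the heart of the argument: I would establish it by showing that the cyclic/arc structure of propagator supports $\{i,i{+}1,j,j{+}1\}$ together with admissibility forces a canonical decomposition of $V_R$ into the vertex sets of its maximal ``connected'' exact components, so that any two union-of-exact fillings supported on $V_R$ share this decomposition. Granting this structural fact, the matroid unions of uniform matroids coincide term by term, giving $M[R] = M[R']$ and hence $M[\cP] = M[\cP']$, which is the desired equality of positroids.
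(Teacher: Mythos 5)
The paper does not prove this theorem itself --- it is quoted from \cite[Theorem 1.18]{Wilsonloop} with no argument given --- so your attempt has to be judged on its own merits. Most of your machinery is correct and is indeed the natural toolkit here: the no-cancellation argument identifying $\cB(W)$ with the bases of the transversal matroid $M[\cP]$ of the family $(V_p)_{p \in \cP}$ (legitimate, since a positroid is a matroid and is determined by its basis set), the Hall's theorem reformulation, the presentation-independent splitting $M[\cP] = M[Q] \vee M[R]$ by matroid union, and the lemma that an exact subdiagram yields the uniform matroid of rank $|R|$ on $V_R$ are all valid.

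The genuine gap is the step you yourself defer (``granting this structural fact''), and it cannot be closed in the form you propose, because the structural fact is false: the collection of supports of the exact components is \emph{not} determined by $V_R$ alone. Take $n = 8$, $Q = \emptyset$, $R = \{(1,3),(5,7)\}$, $R' = \{(1,7),(3,5)\}$, and set $W = (R,8)$, $W' = (R',8)$. Both diagrams are admissible (no crossings, all support conditions hold); both $R$ and $R'$ are unions of exact subdiagrams, since \emph{every} single propagator $p$ in an admissible diagram is exact ($|V_p| = 4 = 1+3$); and $V_R = V_{R'} = [8]$. Yet $M[R] \neq M[R']$: the set $\{1,3\}$ is dependent in $M[R]$ (vertices $1$ and $3$ are supported only by the propagator $(1,3)$, so $\Delta_{13}(\cC(W)) \equiv 0$) but independent in $M[R']$ (match $1 \mapsto (1,7)$, $3 \mapsto (3,5)$, so $\Delta_{13}(\cC(W')) = c_{1,1}c_{2,3} \not\equiv 0$). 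So there is no canonical ``maximal connected exact'' decomposition of $V_R$, and in fact this example shows the theorem would be \emph{false} under a literal reading of Definition \ref{equivdfn}. The definition has to be read, as in \cite{Wilsonloop}, with the exact pieces matched up: $R = R_1 \sqcup \dots \sqcup R_m$ and $R' = R'_1 \sqcup \dots \sqcup R'_m$ with each $(R_i, V_{R_i})$ and $(R'_i, V_{R'_i})$ exact and $V_{R_i} = V_{R'_i}$ for every $i$. Under that reading your own lemmas finish the proof immediately: $|R_i| = |V_{R_i}| - 3 = |R'_i|$, so $M[R_i]$ and $M[R'_i]$ are the same uniform matroid, whence $M[\cP] = M[Q] \vee M[R_1] \vee \dots \vee M[R_m] = M[\cP']$ term by term; note also that matroid union only needs the $R_i$ to be disjoint as sets of propagators and is indifferent to overlaps among the supports $V_{R_i}$, so no connectedness or maximality analysis is needed at all.
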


\section{Feynman integrals of WLDs  \label{sec:Feynmanrules}}

In this section we give more details on the physical interpretation of Wilson loop diagrams and their associated data, in order to motivate the computations of later sections.

A  holomorphic Wilson loop at $n$ points (which we do not define here) is closely related to the $n$ point scattering amplitude of supersymmetric Yang Mills Theory (SYM $N=4$) \cite{Alday:2007hr}. These Wilson loops $\mathfrak{W}_{k,n}$ are given as sums of Feynman diagrams, which in this setting are the Wilson loop {\em diagrams} defined in Section \ref{sub:WLDdef}. These diagrams represent interactions in SYM $N=4$; a diagram with $n$ external vertices and $k$ MHV propagators represents a $N^kMHV$ interaction.

Inspired by the work on the Amplituhedron, which interprets the $n$ point $N^kMHV$ on shell scattering amplitude as the volume of a ($4k$-dimensional) subspace of $\Gr(k, k+4)$ called the Amplituhedron, we study the $n$ point $N^kMHV$ scattering amplitude as a (different) volume of the subspace $\cZ_*(\cW_*(k,n)) \subset \Gr(k, k+4)$. We work under the expectation that the $\cZ_*(\cW_*(k,n))$ is closely related to the Amplituhedron. This point of view is different from work done to associate Wilson loop diagrams and the holomorphic Wilson loop to the Amplituhedron squared \cite{Amplituhedronsquared}. Namely, in this paper, we are interested in the geometry of the $3k$-dimensional subspace $\cW(k, n) \subset \Gr(k, n)$ in Definition \ref{dfn:WLDcomplex}.

Note that the amplitude itself is defined on $\cW_*(k,n)$; this is described in Sections \ref{sub:Wilsonloops} and \ref{sub:integrals} below. However, as seen in Theorem \ref{admisWLDSarepositroids}, the postive structure of the Wilson loop diagrams is encapsulated by the subspace sections $\cW(k, n) \subseteq \Gr(k,n)$. In Sections \ref{sub:mnemonic} and \ref{sec:cancellingpoles} below, we examine whether all necessary cancellations to make the theory finite can be verified by considering only the positroid tiling of $\cW(k, n)$. We verify this in the case of $\Gr(2,6)$ (Theorem \ref{boundarycancelthm}), and conjecture that it holds for general $k$ and $n$.

\subsection{Relating Wilson loops and Wilson loop diagrams}\label{sub:Wilsonloops}

The diagrams we focus on arise from a reformulation of Wilson loops in twistor space \cite{Mason:2010yk}. In particular, the equivalent object to a Feynman diagram in this setting is the Wilson loop diagram (WLD). Recall from Section \ref{sub:notation} that the vertices of the external convex polygon of a WLD correspond to the external particles of an interaction, which we represent as a matrix $\cZ \in M_{\R, +} (n, k+4)$ with the $i^{th}$ row (denoted $Z_i$) corresponding to the $i^{th}$ vertex. We also consider one additional vector $Z_* \in \R^{4+k}$, which represents a choice of gauge.

Each $Z_i$ is a section of a $k$-dimensional real vector bundle over twistor space; the first 4 components $Z_i^\mu$ corresponding to the real momentum data in twistor space, and the last $k$ components corresponding to the bosonized fermionic data. Without loss of generality, we may choose $Z_*$ to be the $0$ section in this bundle., i.e. such that the final $k$ entries of $Z_*$ are $0$.

Each propagator depicted in the Wilson loop diagram corresponds to an MHV propagator of the overall interaction. If the propagator $p$ corresponds to the $b^{th}$ row of the matrix $\cC_*(W)$, then it is represented by the vector \ba Y_b = \cC_*(W)_b \cdot \cZ_* \in \R^{4+k} \; . \label{Ybeq}\ea

Just as a scattering amplitude is given as a sum of Feynman integrals, the holomorphic Wilson loop $\mathfrak{W}_{k,n}(\cZ)$ is given as a sum of certain integrals $\cI(W)(\cZ_*)$ associated to Wilson loop diagrams $W$ (see equation \eqref{Iwzeq} below). Given a matrix of external data $\cZ_*$ satisfying the conditions of Definition \ref{def:cZ def}, the integral $I(W)(\cZ_*)$ assigns a volume to the space $\cC_*(W) \cdot \cZ_*$. The Wilson loop is then given by
\ba \mathfrak{W}_{k,n}(\cZ) = \sum_{\begin{subarray}{c}W \textrm{admis.} \\ n \textrm{ point N${^k}$MHV diag.}\end{subarray}} \cI(W)(\cZ_*)\;. \label{holowilsonloop}\ea
For more on the relation between Wilson loop diagrams and traditional Feynman diagrams, see \cite{Adamo:2011cb}. In particular, the diagrams we consider in this paper correspond to to tree level amplitudes (no internal loops). The analysis in this paper can be extended to loop level, but we restrict ourselves to tree level interactions here.

There is one important subtlety to interpreting the holomorphic Wilson loop geometrically as in \eqref{holowilsonloop}. If two Wilson loop diagrams are equivalent, then they define the same positroid cell, i.e. $\Sigma(W) = \Sigma(W')$.  The spaces $\cC_*(W)$ and $\cC_*(W')$ are therefore also equal. If one were to rewrite the expression in \eqref{holowilsonloop} as a sum over distinct subspaces of $\cW_*(k,n)$, one would need to sum over volume functionals associated to each subspace: \ba \textrm{vol } \cC_*(W)  = \sum_{\substack{W' \textrm{ s.t. } \\ W' \sim W}} \cI(W') \;. \label{cellvolumeeq} \ea
Combining \eqref{holowilsonloop} and \eqref{cellvolumeeq} gives
\bas \mathfrak{W}_{k,n} = \sum_{\cC_*(W) \subset \cW_*(k,n)} \ \ \sum_{\substack{W' \textrm{ s.t. } \\ W' \sim W}} \cI(W') \;.\eas
Thus in order to study the Wilson loop $\mathfrak{W}_{k,n}$, we need to know exactly which positroids lie in $\cW(k,n)$ (the outer summation), how many equivalent diagrams are associated to each positroid (the inner summation), and of course how to compute the integral $\cI(W)$ for each WLD $W$.

\subsection{From the Wilson loop diagram $W$ to the integral $\cI(W)$}\label{sub:integrals}

Let $W = (\cP, n)$ be an admissible Wilson loop diagram with $|\cP| = k $ propagators, and recall the notation for the external data $\cZ_*$ given in Section \ref{sub:notation} above. The integral $\cI(W)$ is a functional associated to the space parametrized by $\cC_*(W)$: evaluating it on a choice of data $\cZ_*$ yields (a component of) the Wilson loop for that particular external data. We define $\cI(W)$ as a function of $\cZ_*$ as follows \cite{Adamo:2011cb,Mason:2010yk}:
\ba \cI(W)(\cZ_*) = \int_{\R^{4k}} \frac{\prod_{b= 1}^k \prod_{a \in V_{p_b}} \rd c_{b,a}}{R(W)} \delta^{4k | 4k}(\cC_*(W) \cdot \cZ_*) \;, \label{Iwzeq}\ea
where the $c_{b,a}$ are the entries of the matrix $\cC(W)$, and $\delta^{4k|4k}$ and $R(W)$ are given in Definitions \ref{def:delta} and \ref{def:RW} respectively. An example of the computation of $\cI(W)(\cZ_*)$ is given in Example \ref{complicateddiagrameg} below.

\begin{dfn}\label{def:delta}The notation $\delta^{4k|4k}$ is related to the Dirac delta function defined on a vector with both bosonic and fermion components. Explicitly, we have
\ba \delta^{4k | 4k}(\cC_*(W) \cdot \cZ_*) = \prod_{b=1}^k (Y_b^{4 + b})^4 \delta^{4}(Y_b^\mu) \;, \label{deltanneq}\ea
where $Y_b$ are the vectors defined in \eqref{Ybeq}, $Y_b^{4+b}$ is the ($4+b$)th entry of $Y_b$, and $Y_b^{\mu}$ is the projection of $Y_b$ to its first four entries (as described in Section \ref{sub:notation}).
\end{dfn}

In order to define the denominator $R(W)$, recall that the vertex labelling of $W$ induces a labelling of the edges, where edge $i$ connects vertex $i$ and $i+1$. For each propagator $p = (i,j)$ in $\cP$, define its edge support set to be $E_p= \{i, j\}$. As with the vertex sets, we extend this to any subset $Q \subseteq \cP$ by setting $E_Q = \cup_{p\in Q} E_p$.

\begin{dfn}\label{def:RW}  If $\{q_1 \ldots q_s\}$ is the set of propagators incident to the edge $i$, ordered such that $q_1$ is the counterclockwise most (closest to the vertex $i$) and $q_s$ the clockwise most (closest to the vertex $i+1$), define
\bas R_{i}(W) =   c_{q_1, i+1}c_{q_s, i} \prod_{m=1}^{s-1}(c_{q_m, i}c_{q_{m+1}, i+1}- c_{q_{m+1}, i} c_{q_m, i+1}) \;.\eas
Note that if $i$ only supports one propagator, then $s=1$ and $R_{i}(W) = c_{p, i} c_{p, i+1 } $.  The denominator $R(W)$ is then defined by
\bas R(W) = \prod_{e \in E_{\cP}} R_e(W) \;.\eas
\end{dfn}

Evaluating $\cI(W)$ corresponds to performing the Dirac delta functions $\delta^{4}(Y_b^\mu)$ and evaluating the expression $\frac{\prod_{b=1}^k(Y_b^{4 + b})^4}{R(W)}$ at the corresponding points. By \eqref{deltanneq} this means we should evaluate it at the solution of the system of equations \bas \cC_*(W) \cdot \cZ_*^\mu  = \mathbf{0} \; .\eas  (This process is sometimes called {\em localizing} the integral at the delta function.)

Since we have an explicit description for the hyperplane on which this occurs (recall equation \eqref{Cmatrixentries}) we can compute the integral \eqref{Iwzeq} in terms of $\cZ_*$ as follows.

We first consider the denominator, writing $R(W)(\cZ_*)$ to denote the localization of $R(W)$ at a given choice of external data $\cZ_*$. For each edge $i$ of $W$, we have \bas R_{i}(W)(\cZ_*) =   \frac{\sigma_{q_1, i+1}\sigma_{q_s, i} \prod_{m=1}^{s-1}(\sigma_{q_m, i}\sigma_{q_{m+1}, i+1}- \sigma_{q_{m+1}, i} \sigma_{q_m, i+1})}{\prod_{m=1}^k \langle i_{q_m} i_{q_{m+1}} j_{q_m} j_{q_{m+1}}\rangle^2 }\;,\eas where $\sigma_{b,a}$ is as in Definition \ref{sigmadfn}. Combining this for all edges as above, we obtain

\[R(W)(\cZ_*) = \prod_{e\in E_{\cP}}  R_{e}(W)(\cZ_*).\]

Note that $R(W)(\cZ_*)$ is a degree $0$ rational function in the determinants $\sigma_{b,a}$ and $\langle i_{p_b}i_{p_b+1}j_{p_b}j_{p_b+1}\rangle$. The {\bf physical} singularities of the theory occur when \ba \langle i_{p_b}i_{p_b+1}j_{p_b}j_{p_b+1}\rangle = 0 \label{physsingeq} \;.\ea The simple poles of $\cI(W)(\cZ_*)$ that occur when $\sigma_{b,a} = 0$ or $\sigma_{b, a}\sigma_{c, a+1} - \sigma_{c,a}\sigma_{b, a+1}=0$ are the {\bf spurious} singularites of the theory, and are expected to cancel in the sum in equation \eqref{holowilsonloop}.  There are circumstances under which the various factors of $R(W)(\cZ_*)$ are not distinct, in which case $I(W)(\cZ_*)$ has poles of higher degree. In this paper, we only consider the simple poles; see Remark \ref{rmk:bdnyspoles}.

In order to examine the numerator of $\cI(W)(\cZ_*)$, recall from \eqref{Ybeq} that given a Wilson loop diagram $W = (\cP, n)$ with $p_b = (i,j) \in \cP$ corresponding to $b^{th}$ row of $\cC_*(W)$, we have
\bas Y_p^b = c_{b,i}Z_{i}^{4+b} + c_{b,i+1}Z_{i+1}^{4+b} + c_{b,j}Z_{j}^{4+b} + c_{b,j+1}Z_{j+1}^{4+b}\;.\eas
We note that because of the symmetries in the bosonization process, the integral $\cI(W)$ is invariant the symmetric group $S_k$ acting on the rows of $\cC_*(W)$.

After localization, we obtain \ba F_p^b := (Y_p^b)^4 = \frac{1}{\langle i (i+1) j (j+1) \rangle^4}(\sigma_{b,i}Z_{i}^{4+b} + \sigma_{b,i+1}Z_{i+1}^{4+b} + \sigma_{b,j}Z_{j}^{4+b} + \sigma_{b,j+1}Z_{j+1}^{4+b})^4\;.\label{Feq}\ea

In other words, the integral $\cI(W)$ evaluates to \ba \cI(W)(\cZ_*) = \frac{\prod_{b=1}^k F_p^b}{R(W)(\cZ_*)} \label{localizedint}\;.\ea

Since SYM $N=4$ is a finite theory \cite{Alday:2007hr, ParkeTaylorids}, the scattering amplitudes (and thus the holomorphic Wilson loops $\,\mathfrak{W}_{k,n}$) are finite. However, as seen above, the integrals $\cI(W)(\cZ_*)$ have spurious poles. In order for these poles to cancel, they must appear on the boundaries of the cells $\Sigma(W)$ and cancel exactly in the induced tiling. This is parallel to the cancellation of poles associated to the BCFW integrals in the Amplituhedron calculation \cite{Arkani-Hamed:2013jha}. In Section \ref{sec:cancellingpoles} we explicitly show this cancellation for the case $k=2$, $n=6$.

\subsection{Example: computing $\cI(W)$ for a Wilson loop diagram with $k=3$, $n=8$}

Before proceeding with the more geometric aspects of these Wilson loop diagrams, we give an example of the integrals and rational functions involved. As we only consider the case of $N^2MHV$ diagrams, this is a more complicated example than any we consider in the paper, but we include it for give a fuller flavour of the calculations involved.

\begin{eg} \label{complicateddiagrameg}
Consider the following diagram:
\bas W = {\begin{tikzpicture}[rotate=67.5,baseline=(current bounding box.east)] 
\begin{scope}
\drawWLD{8}{1.5} 
\drawnumbers 
\drawprop{2}{-1}{8}{0} 
\drawprop{2}{0}{6}{0}
\drawprop{2}{1}{4}{0}
\end{scope}
\end{tikzpicture} }
\eas

From Definition \ref{Cstardef}, we have \bas C_*(W) = \begin{bmatrix}1 & c_{1,1} & c_{1,2} & c_{1,3} & 0 & 0 & 0 & 0& c_{1,8} \\ 1 & 0 & c_{2,2} & c_{2,3} & 0 & 0 & c_{2,6} & c_{2,7} & 0 \\ 1 & 0 & c_{3,2} & c_{3,3} & c_{3,4} & c_{3,5} & 0 & 0& 0\end{bmatrix} \;. \eas

Then by the algorithm in Definition \ref{def:RW}, we obtain \bas R(W) = c_{1,3}(c_{1,2}c_{2,3} - c_{2,2}c_{1,3})(c_{2,2}c_{3,3} - c_{3,2}c_{2,3})c_{3,2} c_{3,4}c_{3,5} c_{2,6}c_{2,7}  c_{1,1}c_{1,8} \; .\eas

Localizing the integral against the product of delta functions $\prod_{b=1}^3\delta^4(Y^\mu_b)$ as in Definition \ref{def:delta} gives

\bas \cC_*(W)(\cZ) = \begin{bmatrix} 1 & \frac{\langle 238*\rangle}{\langle 2381\rangle} & \frac{\langle *381\rangle}{\langle 2381\rangle} & \frac{\langle 2*81\rangle}{\langle 2381\rangle}& 0 & 0 & 0 & 0 & \frac{\langle 23*1\rangle}{\langle 2381\rangle} \\1 & 0 & \frac{\langle *367\rangle}{\langle 2367\rangle} & \frac{\langle 2*67\rangle}{\langle 2367\rangle}& 0 & 0 & \frac{\langle 23*7\rangle}{\langle 2367\rangle} & \frac{\langle 236*\rangle}{\langle 2367\rangle} & 0 \\ 1 & 0 & \frac{\langle *345\rangle}{\langle 2345\rangle} & \frac{\langle 2*45\rangle}{\langle 2345\rangle}& \frac{\langle 23*5\rangle}{\langle 2345\rangle} & \frac{\langle 234*\rangle}{\langle 2345\rangle} & 0& 0& 0  \end{bmatrix}.
\eas

By choosing a $Z_*$ with 0 fermionic components (i.e. with $0$ in the final $k$ entries) and assuming that the vectors $Z_i^\mu$ are in general position (i.e. that the determinants not involving $Z_*^\mu$ are all non-zero), we may evaluate this integral to be

\begin{align*}\cI(W) (\cZ_*) &=  \frac{\left(\begin{multlined}\big(\langle *381\rangle Z_2^5 + \langle 2\!*\!81\rangle Z_3^5 + \langle 23\!*\!1\rangle Z_8^5 + \langle 238*\rangle Z_1^5\big)^4
\\  \cdot \big(\langle *367\rangle Z_2^6 + \langle 2\!*\!67\rangle Z_3^6 + \langle 23\!*\!7\rangle Z_6^6 + \langle 236*\rangle Z_7^6\big)^4
\\ \cdot \big(\langle *345\rangle Z_2^7 + \langle 2\!*\!45\rangle Z_3^7 + \langle 23\!*\!5\rangle Z_4^7 + \langle 234*\rangle Z_5^7\big)^4 \end{multlined}\right)}{\left(\begin{multlined}\langle 2\!*\!81\rangle \big(\langle *281\rangle \langle 2\!*\!67\rangle  - \langle *367\rangle \langle 2\!*\!81\rangle\big)\big(\langle *367\rangle  \langle 2\!*\!45\rangle  - \langle *345\rangle \langle 2\!*\!67\rangle \big)\langle *345\rangle \\ \cdot \langle 23\!*\!5\rangle \langle 234*\rangle \langle 23\!*\!7\rangle \langle 236*\rangle \langle 238*\rangle \langle 23\!*\!1\rangle
  \end{multlined}\right)}
 \\[1em]
& =
\frac{\left(\begin{multlined}\big(\sigma_{1,2} Z_2^5 + \sigma_{1,3} Z_3^5 + \sigma_{1,8} Z_8^5 + \sigma_{1,1} Z_1^5\big)^4\\\cdot\big(\sigma_{2,2} Z_2^6 + \sigma_{2,3} Z_3^6 + \sigma_{2,6} Z_6^6 + \sigma_{2,7} Z_7^6\big)^4\\\cdot\big(\sigma_{3,2} Z_2^7 + \sigma_{3,3} Z_3^7 + \sigma_{3,4} Z_4^7 + \sigma_{3,5} Z_5^7\big)^4\end{multlined}\right)}
{\sigma_{1,3}\, ( \sigma_{1,2} \sigma_{2,3} -  \sigma_{2,2} \sigma_{1,3})( \sigma_{2,2} \sigma_{3,3} -  \sigma_{3,2} \sigma_{2,3}) \, \sigma_{3,2} \, \sigma_{3,4}\,  \sigma_{3,5} \, \sigma_{2,6}\, \sigma_{2,7} \, \sigma_{1,1}\, \sigma_{1,8}     }.\end{align*}

Notice that this integral has only $10$ spurious poles singularities, two of which are defined by the more complicated expressions $( \sigma_{1,2} \sigma_{2,3} -  \sigma_{2,2} \sigma_{1,3})$ and $( \sigma_{2,2} \sigma_{3,3} -  \sigma_{3,2} \sigma_{2,3})$. If we had instead considered a diagram that did not have propagators sharing a terminal edge, we would have found $12$ (i.e. $4k$) spurious poles, each given by a single $\sigma_{b,a}$.
\end{eg}

\section{The geometry of Wilson loop diagrams representing $N^2MHV$ diagrams at $6$ points}\label{sec:2,6case}

We now restrict our attention to the space $\cW(2,6)$, which is tiled by cells associated to admissible Wilson loop diagrams with 6 vertices and 2 propagators. The aim of this section is to examine the $\cW(2,6)$ case in detail, computing the Le diagrams associated to each WLD, their codimension 1 boundaries, and the homology of the subcomplex in $\Gr(2,6)$ whose top-dimensional cells are precisely those in $\cW(2,6)$.

By computing all possible codimension 1 boundaries shared by pairs of cells in $\cW(2,6)$, we also show that the standard diagramatic tool used by physicists to identify shared boundaries of Wilson loop diagrams is insufficient even in this small case, as it does not ``see'' many of the shared boundaries. This suggests that the combinatorial approach to studying Wilson loop diagrams is a fruitful one.

For simplicity and clarity, we omit the computations underlying the data in this section, but note that they can easily be reconstructed using the vertex-disjoint path system approach described in Section \ref{sub:positroidbackground}.\footnote{The Python code used by the authors to perform these computations is available as an auxiliary file to this paper on arXiv.} Many other tools are available to study the general case, which we explore further in a forthcoming paper.

\subsection{The Le diagrams associated to Wilson loop diagrams in $\cW(2,6)$}\label{sub:2,6data}

In Table \ref{nameWLDLetable} we list the 21 admissible Wilson loop diagrams with 2 propagators on 6 vertices, along with the Le diagram of their associated positroid. We also give each WLD a name ($V_{\bullet}$, $P_{\bullet}$, or $E_{\bullet}$) in order to easily refer to them later. Notice that the rotational symmetry of the Wilson loop diagrams is reflected in the Le diagrams.

\begin{longtable}[t]{ccl || ccccl }

\multicolumn{2}{c}{\textbf{WLD}} & \textbf{Le diagram} & \multicolumn{4}{c}{\textbf{WLD}} & \textbf{Le diagram} \\
$V_1$ & \makediag{1}{-1}{5}{0}{1}{1}{3}{0}& \ytableaushort{0+0+,++++} &  $E_{1,L}$  & \makediag{5}{1}{2}{0}{5}{-1}{3}{0}& $E_{1,R}$ & \makediag{2}{-1}{5}{0}{2}{1}{4}{0} & \ytableaushort{+++,+++}\\
$V_2$ & \makediag{6}{0}{2}{-1}{2}{1}{4}{0} &\ytableaushort{+0++,0+++}&  $E_{2, L}$ & \makediag{6}{1}{3}{0}{6}{-1}{4}{0} & $E_{2, R}$ &\makediag{3}{-1}{6}{0}{3}{1}{5}{0}&\ytableaushort{+++0,+++} \\
$V_3$ & \makediag{1}{0}{3}{-1}{3}{1}{5}{0} & \ytableaushort{0+++,+++} &  $E_{3,L}$ &\makediag{1}{1}{4}{0}{1}{-1}{5}{0} & $E_{3,R}$ &\makediag{4}{-1}{1}{0}{4}{1}{6}{0}&\ytableaushort{+++0,+++0} \\
$V_4$ & \makediag{6}{0}{4}{1}{2}{0}{4}{-1}&\ytableaushort{+++0,0+++} &  $E_{4, L}$ & \makediag{2}{1}{5}{0}{2}{-1}{6}{0} & $E_{4, R}$ & \makediag{5}{-1}{2}{0}{5}{1}{1}{0}&\ytableaushort{++0+,++0+}\\
$V_5$ &\makediag{1}{0}{5}{1}{3}{0}{5}{-1}&\ytableaushort{++0+,+++} &  $E_{5,L}$ & \makediag{3}{1}{6}{0}{3}{-1}{1}{0} & $E_{5,R}$ & \makediag{6}{-1}{3}{0}{6}{1}{2}{0} & \ytableaushort{+0++,+0++} \\
$V_6$ & \makediag{6}{1}{2}{0}{6}{-1}{4}{0} &\ytableaushort{+0+0,++++} &  $E_{6, L}$ & \makediag{4}{1}{1}{0}{4}{-1}{2}{0} & $E_{6, R}$&\makediag{1}{-1}{4}{0}{1}{1}{3}{0} &\ytableaushort{0+++,0+++}\\

$P_1$ &\makediag{1}{0}{3}{0}{4}{0}{6}{0}&\ytableaushort{0++0,++++}\\
$P_2$ & \makediag{1}{0}{5}{0}{2}{0}{4}{0}&\ytableaushort{++0+,0+++}\\
$P_3$ &\makediag{6}{0}{2}{0}{3}{0}{5}{0}&\ytableaushort{+0++,+++}\\

\caption{All Wilson loop diagrams with $k=2$ and $n=6$, and their associated Le diagrams.} \label{nameWLDLetable}
\end{longtable}

Recall from \eqref{cellvolumeeq} that \bas \textrm{vol}_{\cZ_*} (\cC_*(W) \cdot \cZ_*) = \sum_{W' \sim W} I(W')(\cZ_*)\;.\eas That is, the volume of the space parametrized by matrices $\cC_*(W)$ of a Wilson Loop diagram $W$ is given by a sum of integrals associated to all diagrams equivalent to $W$ (including $W$ itself). If $W$ contains an exact subdiagram, then this equivalence class may contain more than one diagram, and thus the volume may involve more than one integral.

\begin{eg}
From Table \ref{nameWLDLetable}, we see that there is an equivalence between each pair of Wilson loop diagrams $E_{i,L}$ and $E_{i,R}$. Indeed, $E_{i,L}$ and $E_{i,R}$ both have two propagators, supported in each case on the set $V_{\cP} = [6]\setminus \{i\}$, and these propagators form an exact subdiagram (see Definition \ref{exactdfn}). By Definition \ref{equivdfn} we therefore have $E_{i,L} \sim E_{i,R}$, and hence by Theorem \ref{equivthm} they have the same associated positroid.

In the case of $N^2MHV$ diagrams at 6 points, these are the only non trivial equivalence classes. These are also the only diagrams that contain exact subdiagrams.
\end{eg}

\begin{rmk}
There are also six $6$-dimensional positroid cells in $\Gr(2,6)$ that do not correspond to any Wilson loop diagram. They are listed in Figure \ref{missingcellsfigure}.
\begin{figure}[h!]
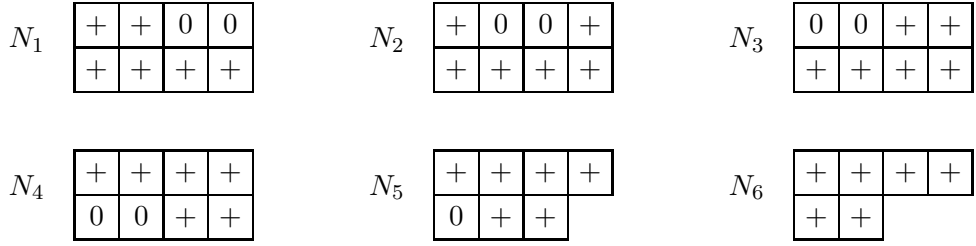

\begin{gather*} N_1 \quad \ytableaushort{++00,++++} \qquad \qquad N_2 \quad \ytableaushort{+00+,++++} \qquad \qquad N_3 \quad \ytableaushort{00++,++++}\\
\
\\ N_4\quad \ytableaushort{++++,00++} \qquad \qquad N_5 \quad \ytableaushort{++++, 0++} \qquad \qquad N_6 \quad \ytableaushort{++++,++} \end{gather*}
\caption{The 6-dimensional positroid cells in $\Gr(2,6)$ that do not correspond to Wilson loop diagrams. As in Table \ref{nameWLDLetable}, we assign labels to each cell in order to refer to them later.}
\label{missingcellsfigure}
\end{figure}
\end{rmk}

\subsection{The geometry of $\cW(2,6)$}\label{sub:2,6geometry}

The spurious singularities of the integrals $\cI(W)(\cZ_*)$ in equation \eqref{Iwzeq} are conjectured to cancel on the codimension 1 boundaries shared between pairs of cells associated to Wilson loop diagrams. In this section, we use the technology described in Section \ref{sec:WLDbackground} to establish exactly which cells in $\cW(2,6)$ share codimension 1 boundaries in the CW complex of $\Gr(2,6)$.

In this small and easily computable setting, identifying a shared boundary proceeds as follows. Given two Le diagrams $L_1$ and $L_2$, each representing positroid cells of dimension $d$, we can first obtain their bases sets $\BB_1$ and $\BB_2$ via the method of vertex-disjoint path systems. Then the cells corresponding to $L_1$ and $L_2$ share a $d-1$ dimension boundary if and only if there is a Le diagram $L_3$ with bases set $\BB_3$ such that \begin{enumerate} \item $L_3$ has exactly $d-1$ squares containing a $+$ symbol; \item $\BB_3 \subset \BB_2 \cap \BB_1$. \end{enumerate}

We now give several examples to illustrate this method, and to highlight the types of behavior exhibited by the cells in $\cW(2,6)$.

\begin{eg}\label{eg:bounds1}
Consider the Wilson loop diagrams $V_1$ and $E_{6,R}$, and their associated positroid cells:

\[\begin{tabular}{c|ccl}
$V_1$ & \begin{tikzpicture}[rotate=60,baseline=(current bounding box.east)]
	\begin{scope}
	\drawWLD{6}{0.8}
	\drawnumbers
	\drawprop{1}{-1}{5}{0}
	\drawprop{1}{1}{3}{0}
		\end{scope}
	\end{tikzpicture}

 & \ytableaushort{0+0+,++++} &
$\BB_{V_1} = \{12, 13, 14, 15, 16, 23, 24, 25, 26, 35, 36, 45, 46 \}$
\\ \hline
$E_{6,R}$
&
\begin{tikzpicture}[rotate=60,baseline=(current bounding box.east)]
	\begin{scope}
	\drawWLD{6}{0.8}
	\drawnumbers
	\drawprop{1}{-1}{4}{0}
	\drawprop{1}{1}{3}{0}
		\end{scope}
	\end{tikzpicture}
&
\ytableaushort{0+++,0+++}
&
$\BB_{E_{6,R}} = \{12,13,14,15,23,24,25,34,35,45\}$
\\

\end{tabular}\]

Drawing on the intuition of vertex-disjoint path systems, it is clear that adding more zeros to a Le diagram (while continuing to respect the Le condition) corresponds to constructing a positroid cell lying on the boundary of the original cell. With this in mind, there is an obvious choice for a codimension 1 boundary shared by $V_1$ and $E_{6,R}$, namely
\[\ytableaushort{0+0+,0+++}.\]
This Le diagram has the basis set $\{12,13,14,15,23,24,25,35,45\}$, which is precisely the intersection of $\BB_{V_1}$ and $\BB_{E_{6,R}}$.
\end{eg}

\begin{eg}\label{eg:bounds2}
Now consider the Wilson loop diagrams $V_1$ and $P_1$:

\[\begin{tabular}{c|ccl}
$V_1$ & \begin{tikzpicture}[rotate=60,baseline=(current bounding box.east)]
	\begin{scope}
	\drawWLD{6}{0.8}
	\drawnumbers
	\drawprop{1}{-1}{5}{0}
	\drawprop{1}{1}{3}{0}
		\end{scope}
	\end{tikzpicture}

 & \ytableaushort{0+0+,++++} &
$\BB_{V_1} = \{12, 13, 14, 15, 16, 23, 24, 25, 26, 35, 36, 45, 46 \}$
\\ \hline
$P_1$
&
\begin{tikzpicture}[rotate=60,baseline=(current bounding box.east)]
	\begin{scope}
	\drawWLD{6}{0.8}
	\drawnumbers
	\drawprop{6}{0}{4}{0}
	\drawprop{1}{0}{3}{0}
		\end{scope}
	\end{tikzpicture}
&
\ytableaushort{0++0, ++++}
&
$\BB_{P_1} = \{12, 13, 14, 15, 16, 24, 25, 26, 34, 35, 36, 45, 46\}$
\\

\end{tabular}\]

Once again there is an ``obvious'' codimension 1 boundary:
\[\ytableaushort{0+00,++++},\]
with bases $\{12,13,14,15,16,25,26,35,36,45,46\} \subsetneq \BB_{V_1} \cap \BB_{P_1}$.
In fact, $V_1$ and $P_1$ share a second codimension 1 boundary, namely
\[\ytableaushort{0++0,+++0},\] corresponding to the bases set $\{12,14,15,16,24,25,26,35,36,45,46\}$.
This is a manifestation of the same type of behavior highlighted in Example \ref{eg:badLebehavior}, and demonstrates why a simpler approach to identifying lower dimensional boundary cells (e.g. via propagator moves, as in Section \ref{sub:mnemonic} below, or by constructing Le diagrams ``by eye'' as in Example \ref{eg:bounds1}) is insufficient.

Having identified this unexpected second boundary, it is now easy to see (using Table \ref{nameWLDLetable}) that this boundary is also shared by the cell $\Sigma(E_{3,\bullet})$.
\end{eg}

\begin{eg}\label{eg:bounds3}
The 5-dimesional positroid cell corresponding to the Le diagram
\[\ytableaushort{000+,++++}\]
also lies on the boundary of $\Sigma(V_1)$ in $\Gr(2,6)$. However, this $5$-dimensional cell does not lie on the boundary of any of the other positroid cells listed in Table \ref{nameWLDLetable}; instead, it shares a boundary (in $\Gr(2,6)$) with the 6-dimensional cells
\[N_2 = \ytableaushort{+00+,++++} \quad \text{ and } \quad N_3 = \ytableaushort{00++,++++}.\]
As per Figure \ref{missingcellsfigure}, neither of these correspond to Wilson loop diagrams.
\end{eg}

Therefore, in Examples \ref{eg:bounds1} through \ref{eg:bounds3}, we see that the behavior of boundaries between positroid cells in $\Gr(2,6)$ is quite complicated. Certain cells, such as $\Sigma(E_{6,R})$ and $\Sigma(V_1)$ share one codimension $1$ boundary between them. Others, such as  $\Sigma(V_1)$ and $\Sigma(P_1)$ share two codimension one boundaries between them. Finally, there are sets of 3 cells that share codimension $1$ boundaries with each other, for instance $\Sigma(V_1)$, $\Sigma(N_2)$ and $\Sigma(N_3)$.

In $\Gr(2,6)$ there are 21 positroid cells of dimension 6 and 50 of dimension 5. From Table \ref{nameWLDLetable} we know that 15 of the 6-dimensional cells in $\Gr(2,6)$ appear in $\cW(2,6)$. Furthermore, from direct computation we find that $38$ of the 5-dimensional cells of $\Gr(2,6)$ share a boundary with at least two distinct positroid cells associated to Wilson loop diagrams. A further six of the 5-dimensional cells are boundaries of cells $\Sigma(E_{i})$, and are thus associated to two different Wilson loop diagrams. The remaining six 5-dimensional cells in $\Gr(2,6)$ are each boundaries of precisely one WLD.

\begin{figure}[p!]
\begin{center}

\begin{tikzpicture}
\pgfmathsetmacro{\radiusA}{1}
\pgfmathsetmacro{\radiusB}{2.5}
\pgfmathsetmacro{\angle}{360/8}

\node (M) at (0,0) {$V_i$};

\foreach \i in {0,...,7} \draw let \n1={\i} in (\angle*\n1+ 90:\radiusA) node(B\n1){$\cdot$};

\foreach \i in {0,1,2,3,5,6,7} \draw let \n1={\i} in (M) -- (B\n1);

\draw[gray,dashed] (M) -- (B4);

\node (1) at (90 + \angle/4:\radiusB) {$V_{i+2}$};
\node (2) at (90 - \angle/4:\radiusB) {$V_{i+4}$};

\draw (B0) -- (1) (B0) --(2);

\node (3A)[gray] at (90 + \angle :\radiusB) {$N_{i+3}$};
\node (3) at (90 + \angle + \angle/2:\radiusB) {$P_{i+1}$};
\node (4) at (90 + 2*\angle:\radiusB) {$E_{i+5}$};

\draw[gray] (B1) -- (3A);
\draw (B1) -- (3) (B2) -- (3);
\draw (B2)-- (4);

\node (5) at (90 + 3*\angle:\radiusB) {$E_{i+4}$};
\node(5A)[gray] at (90 + 4*\angle - \angle/4:\radiusB) {$N_{i+1}$};
\node(5B)[gray] at (90 +4*\angle + \angle/4:\radiusB) {$N_{i+2}$};
\node (6) at (90 + 5*\angle:\radiusB) {$E_{i+3}$};

\draw (B3) -- (5) (B5) -- (6);
\draw[gray]  (B3) -- (5A) (B5) -- (5B);
\draw[gray,dashed] (B4) -- (5B) (B4) -- (5A);

\node (7) at (90 + 6*\angle:\radiusB) {$E_{i+2}$};
\node (8) at (90 + 6*\angle + \angle/2:\radiusB) {$P_{i}$};

\node (8A)[gray] at (90 + 7*\angle:\radiusB) {$N_i$};

\draw (B6) -- (7);
\draw (B6) -- (8) (B7) -- (8);
\draw[gray] (B7) -- (8A);

\end{tikzpicture}

\vspace{2em}

\begin{tikzpicture}
\pgfmathsetmacro{\radiusA}{1}
\pgfmathsetmacro{\radiusB}{2.5}
\pgfmathsetmacro{\angleA}{360/4}
\pgfmathsetmacro{\angleB}{\angleA/5}

\node (M) at (0,0) {$P_i$};

\foreach \i in {0,...,3} \draw let \n1={\i} in (\angleA*\n1+ 135 - \angleB:\radiusA) node(B\n1){$\cdot$};
\foreach \i in {0,...,3} \draw let \n1={\i} in (\angleA*\n1+ 135 + \angleB:\radiusA) node(C\n1){$\cdot$};

\foreach \i in {0,...,3} \draw let \n1={\i} in (M) -- (B\n1);
\foreach \i in {0,...,3} \draw let \n1={\i} in (M) -- (C\n1);

\node (0) at (135 + 0*\angleA:\radiusB) {$V_{i+5}$};
\node (1) at (135 + 1*\angleA:\radiusB) {$V_{i+3}$};
\node (2) at (135 + 2*\angleA:\radiusB) {$V_{i+2}$};
\node (3) at (135 + 3*\angleA:\radiusB) {$V_{i}$};

\foreach \i in {0,...,3} \draw let \n1={\i} in (\n1) -- (B\n1) (\n1) -- (C\n1);

\node (00) at (135 + 0.3*\angleA:\radiusB) {$E_{i+4}$};
\node (01) at (135 + 1.3*\angleA:\radiusB) {$E_{i+5}$};
\node (02) at (135 + 2.3*\angleA:\radiusB) {$E_{i+1}$};
\node (03) at (135 + 3.3*\angleA:\radiusB) {$E_{i+2}$};

\foreach \i in {0,...,3} \draw let \n1={\i} in (0\n1) -- (C\n1) ;

\node (000)[gray] at (135 - 0.3*\angleA:\radiusB) {$N_{i+2}$};
\node (001)[gray] at (135 - 3.3*\angleA:\radiusB) {$N_{i+3}$};
\node (002)[gray] at (135 - 2.3*\angleA:\radiusB) {$N_{i+5}$};
\node (003)[gray] at (135 - 1.3*\angleA:\radiusB) {$N_{i}$};

\foreach \i in {0,...,3} \draw[gray] let \n1={\i} in (00\n1) -- (B\n1) ;

\end{tikzpicture}

\vspace{2em}

\begin{tikzpicture}[rotate=-90]

\pgfmathsetmacro{\radiusA}{1}
\pgfmathsetmacro{\radiusB}{2}
\pgfmathsetmacro{\angleA}{360/5}

\node (M) at (0,0) {$E_i$};

\node (B0) at (0:\radiusA) {$*$};
\foreach \i in {1,...,4} \draw let \n1={\i} in (\angleA*\n1:\radiusA) node(B\n1){$\cdot$};
\foreach \i in {0,...,4} \draw let \n1={\i} in (M) -- (B\n1);

\node (0)[gray] at (0*\angleA:\radiusB) {$N_{i+4}$};
\node (1) at (1*\angleA  - \angleA/5:\radiusB)  {$P_{i+2}$};
\node (2) at (1*\angleA  + \angleA/5:\radiusB) {$V_{i+1}$};
\node (3) at (2*\angleA  - \angleA/5:\radiusB)  {$V_{i+2}$};
\node (4) at (3*\angleA  + \angleA/5:\radiusB) {$V_{i+3}$};
\node (5) at (4*\angleA  - \angleA/5:\radiusB) {$V_{i+4}$};
\node (6) at (4*\angleA  + \angleA/5:\radiusB) {$P_{i+1}$};
\node (7)[gray] at (2*\angleA  + \angleA/5:\radiusB)  {$N_{i+3}$};
\node (8)[gray] at (3*\angleA  - \angleA/5:\radiusB) {$N_{i+5}$};

\draw  (B1) -- (1) (B1) -- (2) (B2) -- (3) (B3)--(4) (B4)--(5) (B4) -- (6);

\draw[gray] (7) -- (B2) (8) -- (B3) (0) -- (B0);

\end{tikzpicture}

\end{center}
\caption{Indices of $V_i$, $E_i$, and $N_i$ are taken mod 6, and indices of $P_i$ mod 3. $A - \cdot - B$ indicates that cells $A$ and $B$ share a codimension 1 boundary (represented here by the dot). Dashed gray lines represent boundaries which involve at most one cell associated to an admissible Wilson loop diagram. The solid grey lines correspond to boundaries shared with one of the six $6$-dimensional cells of $\Gr(2,6)$ that are not associated to Wilson loop diagrams. } \label{EPVboundaries}
\end{figure}

Instead of attempting to represent all of these codimension 1 boundaries in one diagram, we describe the shared boundaries of each type of diagram individually; see Figure \ref{EPVboundaries}. For visual clarity, we write $W$ in Figure \ref{EPVboundaries} to represent each positroid cell, rather than $\Sigma(W)$. Since $E_{i,L}$ and $E_{i,R}$ correspond to the same positroid cell, we suppress the $L$ or $R$ subscript and simply refer to these cells as $E_i$.

In Section \ref{sec:cancellingpoles} below we show that the spurious singularities of the integrals $\cI(W)(\cZ_*)$ do cancel exactly on the codimension 1 boundaries shared by pairs of cells associated to Wilson loop diagrams. Figure \ref{EPVboundaries} highlights two types of boundary which will need extra care:

\begin{enumerate}
\item The behavior highlighted in Example \ref{eg:bounds3} above: a 5-dimensional positroid cell in $\Gr(2,6)$ that lies on the boundary of $\Sigma(V_i)$ and is shared with no other Wilson loop diagram. This is represented by the dashed grey lines in Figure \ref{EPVboundaries}.
\item The cell labelled $*$ in Figure \ref{EPVboundaries} lies on the boundary of only one cell associated to a Wilson loop diagram, namely the cell $\Sigma(E_{i,\bullet})$, but this cell is associated to two different Wilson loop diagrams: $E_{i,R}$ and $E_{i,L}$.
\end{enumerate}

\subsection{A graphical device for understanding codimension one boundaries}\label{sub:mnemonic}

In this section we describe how some of the boundaries in Figure \ref{EPVboundaries} can be seen directly from the Wilson loop diagrams.

\begin{dfn}\label{def:boundaryprops}
Let $W = (\cP, n)$ be an admissible Wilson loop diagram, and $p \in \cP$ one of its propagators. For $v \in V_p$, the {\bf boundary propagator} $\D_v p$ is obtained by moving the endpoint of $p$ away from vertex $v$ while maintaining the requirement that no two propagators cross each other, until one of the following occurs:
\begin{enumerate}
\item the endpoint of $p$ reaches another vertex, i.e. $\D_vp$ is supported on $V_p \backslash {v}$; or
\item the endpoint of $p$ touches the endpoint of another propagator $q$.
\end{enumerate}

Define the {\bf boundary diagram} $\D_{p,v}(W)$ to be the diagram obtained from $W$ by replacing propagator $p$ with $\D_vp$. We say that $\D_{p,v}(W)$ is {\bf degenerate} if there is a subset $Q \subset (\cP \setminus p)\cup \D_vp$ such that $|V_Q| < |Q| + 3$.
\end{dfn}

\begin{eg}\label{boundaryeg}
Consider the Wilson loop diagram $V_1$, i.e.
\[\begin{tikzpicture}[rotate=60,baseline=(current bounding box.east)]
	\begin{scope}
	\drawWLD{6}{1.5}
	\drawnumbers
\drawprop{1}{-1}{5}{0}
\drawprop{1}{1}{3}{0}
		\end{scope}
	\end{tikzpicture}\]
Consider the propagator $p = (1,5)$. By replacing $p$ with $\D_2p$ and $\D_1p$ respectively, we obtain examples of both types of boundary diagrams:
\begin{gather*}\D_{p,2}(V_1)  =
\begin{tikzpicture}[rotate=60,baseline=(current bounding box.east)]
	\begin{scope}
	\drawWLD{6}{1.5}
	\drawnumbers
	\modifiedprop{1}{-0.5}{5}{0}{propagator,dotted}
	\boundaryprop{5}{0}{1}{propagator}
	\drawprop{1}{0.5}{3}{0}
	\boundA{1}{-0.5}{1}
		\end{scope}
	\end{tikzpicture}
\qquad
\D_{p,1}(V_1) =
\begin{tikzpicture}[rotate=60,baseline=(current bounding box.east)]
	\begin{scope}
	\drawWLD{6}{1.5}
	\drawnumbers
	\drawprop{1}{0.3}{3}{0}
	\modifiedprop{5}{0}{1}{-0.3}{propagator,dotted}
	\modifiedprop{5}{0}{1}{1.2}{propagator}
	\boundB{1}{-0.5}{1}
		\end{scope}
	\end{tikzpicture}
\end{gather*}
Let $q = (1,3)$ be the other propagator in $V_1$. Clearly $\D_{p,1}(V_1)$ and $\D_{q,2}(V_1)$ are (combinatorially) the same diagram; we will not distinguish between them. By considering all possible boundary propagators for $V_1$, we see that it has 7 distinct boundary diagrams.
\end{eg}

\begin{eg}\label{eg:Degeneratebound}
Consider the diagram $E_{6,R}$, which has propagators $p = (1,5)$ and $q = (1,4)$. The boundary $\D_{p,5}(E_{6,R})$ is degenerate, since the set $\{q, \D_{5}p\}$ is supported on only $4$ vertices.
\begin{gather*}\D_{p,5}(E_{6,R})  =
\begin{tikzpicture}[rotate=60,baseline=(current bounding box.east)]
	\begin{scope}
	\drawWLD{6}{1.5}
	\drawnumbers
	\modifiedprop{1}{-0.5}{4}{0}{propagator,dotted}
	\boundaryprop{1}{-0.5}{4}{propagator}
	\drawprop{1}{0.5}{3}{0}
	\boundA{4}{-0.5}{4}
		\end{scope}
	\end{tikzpicture}
\end{gather*}
The diagram $\D_{q,3}(E_{6,R})$ is also degenerate, leaving $E_{6,R}$ with 5 distinct nondegenerate boundary diagrams.
\end{eg}

Since the support of a propagator determines which entries of $\cC(W)$ are nonzero, we can give an intuitive interpretation of Definition \ref{def:boundaryprops} in terms of $\cC(W)$:

\begin{dfn}
Define $\cC(\D_{p,v}(W))$ to be the matrix obtained by applying Definition \ref{Cstardef} to the diagram $\D_{p,v}(W)$. In other words, if $p$ is no longer supported on $v$ in $\D_{p,v}(W)$, then $c_{p,v} = 0 $ in $C(\D_{p,v}(W))$, while if $p$ and $q$ meet in $\D_{p,v}(W)$ (both lying between vertices $v$ and $v+1$, say) then $c_{q,v}$ and $c_{q,v+1}$ are constrained by the condition that $c_{p,v}c_{q,v+1} - c_{q,v}c_{p,v+1} = 0$ in $\cC(\D_{p,v}(W))$.
\end{dfn}

We write $\Delta_{p,v}(W)$ for the minor of $\cC(W)$ that is set to 0 in $\cC(\D_{p,v}(W))$; that is,
\[\Delta_{p,v}(W) =\left\{ \begin{array}{ll}
c_{p,v} & \text{ if $p$ is no longer supported on $v$ in $\D_{p,v}(W)$}; \\
c_{p,v}c_{q,v+1} - c_{q,v}c_{p,v+1} & \text{ if propagators $p$ and $q$ touch in $\D_{p,v}(W)$.}
\end{array}\right.\]
Using this notation, we can write \bas \cC(\D_{p,v}(W)) = \lim_{\Delta_{p,v} \rightarrow 0} \cC(W) \;. \eas We call $\cC(\D_{p,v}(W))$ a {\bf boundary matrix} of $\cC(W)$.

\begin{rmk}\label{rmk:bdnyspoles} By construction, these $\Delta_{p,v}(W)$ are exactly the factors of $R(W)$. In the case of Wilson loop diagrams with $2$ propagators and $6$ vertices, we may exhaustively check that the factors of $R(W)$ that correspond to degenerate boundaries are exactly those that correspond to non-simple poles of $I(W)(\cZ_*)$. 
\end{rmk}

We are now ready to introduce a graphical device for calculating boundaries of Wilson loop diagrams.

\begin{mnemonic}\label{mnemonic}
Let $W = (\cP, n) $ and $W' = (\cP', n)$ be two Wilson loop diagrams. If there exist two vertex propagators pairs $(p,v)$ and $(p', v')$, with $p \in \cP$, $v \in V_p$ and $p' \in \cP'$, $v' \in V_{p'}$ such that \bas \cC(\D_{p,v} (W))= \cC(\D_{p',v'} (W'))\;,\eas then the corresponding cells $\Sigma(W)$ and $\Sigma(W')$ share a codimension 1 boundary in $\Gr(k,n)$.
\end{mnemonic}

This is a slightly weaker condition than requiring the boundary diagrams themselves to be equal. Certainly it can happen that two boundary diagrams are equal; for instance, recall the boundary shared between $V_1$ and $E_{6,R}$ from Example \ref{eg:bounds1}. In this case, is easy to see that \[\D_{(1,5), 6}(V_1) = \D_{(1,4), 4}(E_{6,R}),\] and hence the corresponding matrices $\cC(\D_{(1,5), 6}(V_1))$ and $\cC(\D_{(1,4), 4}(E_{6,R}))$ are equal as well.

On the other hand, recall the ``obvious'' boundary between $V_1$ and $P_1$ in Example \ref{eg:bounds2}: while we do have an equality $\cC(\D_{(1,5), 2}(V_1))=\cC(\D_{(4,6), 4}(P_1))$ at the level of the matrices, the boundary diagrams $\D_{(1,5), 2}(V_1)$ and $\D_{(4,6), 4}(P_1)$ are not equal: \begin{equation}\label{eq:slidingpropagator}\D_{(1,5),2}(V_1)  =
\begin{tikzpicture}[rotate=60,baseline=(current bounding box.east)]
	\begin{scope}
	\drawWLD{6}{1.5}
	\drawnumbers
	\modifiedprop{1}{-0.5}{5}{0}{propagator,dotted}
	\boundaryprop{5}{0}{1}{propagator}
	\drawprop{1}{0.5}{3}{0}
		\end{scope}
	\end{tikzpicture}
\qquad
\D_{(4,6),4}(P_1) =
\begin{tikzpicture}[rotate=60,baseline=(current bounding box.east)]
	\begin{scope}
	\drawWLD{6}{1.5}
	\drawnumbers
	\drawprop{1}{0}{3}{0}
	\modifiedprop{6}{0}{4}{0}{propagator,dotted}
	\boundaryprop{6}{0}{5}{propagator}
		\end{scope}
	\end{tikzpicture}
\end{equation}
However, note that the two boundary diagrams only differ in one propagator, and we can obtain one from the other by ``sliding'' the boundary propagator along half the length of an edge.

\begin{rmk} This ``sliding'' of the boundary propagator yielding an equivalent diagram is a general phenomenon: if we have two Wilson loop diagrams $W = (Q\cup (i, i+2) , n)$ and $W' = (Q\cup (i-1, i+1) , n)$, then the boundary diagrams $\D_{(i,i+2), i}(W)$ and $\D_{(i-1,i+1), i+1}(W')$ differ by a half-edge slide and the boundary matrices are equal.\end{rmk}

Graphical Prompt \ref{mnemonic} was originally proposed as a method of identifying all shared boundaries between pairs of admissible Wilson loop diagrams. However, direct computation yields examples of shared boundaries which are {\em not} seen by this graphical approach. We give two examples to illustrate this phenomenon: Example \ref{eg:Vibdnys}, which was already known, and Example \ref{eg:mnemonicVEbound}, which was only identified by the authors when they started to systematically apply the tools of total positivity to this question.

\begin{eg} \label{eg:Vibdnys} Let $p$ be the propagator $(1,5)$. This is a propagator present in both $V_1$ and $V_5$. Consider the boundary diagrams
\begin{gather*}\D_{p,2}(V_1)  =
\begin{tikzpicture}[rotate=60,baseline=(current bounding box.east)]
	\begin{scope}
	\drawWLD{6}{1.5}
	\drawnumbers
	\modifiedprop{5}{0}{1}{0}{propagator,dotted}
	\drawprop{5}{0}{1}{1.5}
    \drawprop{1}{0.5}{3}{0}	
		\end{scope}
	\end{tikzpicture}
\qquad
\D_{p,5}(V_5) =
\begin{tikzpicture}[rotate=60,baseline=(current bounding box.east)]
	\begin{scope}
	\drawWLD{6}{1.5}
	\drawnumbers
	\modifiedprop{1}{0}{5}{0}{propagator,dotted}
	\drawprop{1}{0}{5}{-1.5}
    \drawprop{5}{-0.5}{3}{0}	
		\end{scope}
	\end{tikzpicture}
\end{gather*} The corresponding matrices are \bmls C(\D_{p,2}(V_1)) = \left(
\begin{array}{cccccc}
\lambda c_{q,1} & \lambda c_{q,2}  & 0 & 0 & c_{p,5} & c_{p,6} \\
c_{q,1} & c_{q,2} & c_{q,3} & c_{q,4} & 0 & 0 \\
\end{array}
\right)  \quad \textrm{and} \\ C(\D_{p,6}(V_5)) = \left(
\begin{array}{cccccc}
c_{p,1} & c_{p,2}  & 0 & 0 & \mu c_{r,5} & \mu c_{r,6} \\
0 & 0 & c_{r,3} & c_{r,4} & c_{r,5} & c_{r,6} \\
\end{array}
\right), \emls
where $\lambda, \mu \in \R^{\times}$. Even though these two boundary matrices are not equal, it is easily verified that they have the same sets of independent column vectors. Thus they define the same positroid, and hence the same $5$-dimensional cell of $\Gr(2,6)$, corresponding to the Le diagram \[\ytableaushort{0+0+,+++}.\]
\end{eg}

\begin{eg}\label{eg:mnemonicVEbound}
The positroid cells $\Sigma(V_1)$ and $\Sigma(E_{5,\bullet})$ correspond to the Le diagrams
\[
\ytableaushort{0+0+,++++} \qquad \text{ and }\qquad  \ytableaushort{+0++,+0++} \]
respectively. Keeping Example \ref{eg:badLebehavior} in mind, we see that they share a 5-dimensional boundary, namely
 \begin{equation}\label{eq:5dimcelleg}\ytableaushort{+00+,+0++}.\end{equation}
This can be realized as the cell associated to a boundary diagram for each of the diagrams $V_1$, $E_{5,L}$, and $E_{5,R}$, but in a way that is completely missed by the graphical representation. The three boundary diagrams which yield the Le diagram in \eqref{eq:5dimcelleg} are:
\begin{gather*}
\D_{(1,5),5}(V_1) =
\begin{tikzpicture}[rotate=60,baseline=(current bounding box.east)]
	\begin{scope}
	\drawWLD{6}{1.5}
	\drawnumbers
	\modifiedprop{1}{-0.5}{5}{0}{propagator,dotted}
	\boundaryprop{1}{-0.5}{6}{propagator}
	\drawprop{1}{0.5}{3}{0}
		\end{scope}
	\end{tikzpicture}
\\
\D_{(1,3),3}(E_{5,L}) = \begin{tikzpicture}[rotate=60,baseline=(current bounding box.east)]
	\begin{scope}
	\drawWLD{6}{1.5}
	\drawnumbers
	\modifiedprop{1}{0}{3}{-0.8}{propagator,dotted}
	\modifiedprop{1}{0}{3}{1}{propagator}
	\drawprop{6}{0}{3}{0.5}
		\end{scope}
	\end{tikzpicture}
	\qquad
\D_{(2,6),3}(E_{5,R}) =
\begin{tikzpicture}[rotate=60,baseline=(current bounding box.east)]
	\begin{scope}
	\drawWLD{6}{1.5}
	\drawnumbers
	\modifiedprop{6}{0.5}{2}{0}{propagator,dotted}
	\boundaryprop{6}{0.5}{2}{propagator}
	\drawprop{6}{-0.5}{3}{0}
		\end{scope}
	\end{tikzpicture}
\end{gather*}
Unlike the example in equation \eqref{eq:slidingpropagator}, there is no obvious relationship between these three diagrams.
\end{eg}

For physicists, only the study of the boundaries obtained via boundary propagators is of interest: they encode the spurious singularities of the integral $\cI(W)$. It is therefore important to have a way of identifying all such boundaries.

For the case $k=2$, $n=6$, the following result completely characterizes the shared boundaries obtained from propagator moves.

\begin{prop}\label{res:bounds}
$B$ is a $5$-dimensional cell in $\cW(2,6)$ satisfying $B \subseteq \Sigma(W) \cap \Sigma(W')$ for two distinct Wilson loop diagrams $W$ and $W'$ if and only if \begin{enumerate}
\item The cell $B$ can be realized as the cell parametrized by some boundary diagram $\D_{\hat{p}, \hat{v}}(\hat{W})$, with $\hat{W}$ an admissible Wilson loop diagram with $2$ propagators on $6$ vertices.
\item The minor $\D_{\hat{v}, \hat{p} }(\hat{W})$ corresponds to a simple pole of $I(\hat{W})(\cZ_*)$.
\end{enumerate}
\end{prop}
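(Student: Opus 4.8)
The plan is to prove the biconditional by a finite verification over the $21$ admissible diagrams of Table \ref{nameWLDLetable}, organized around the dictionary between boundary diagrams and poles. First I would record two structural reductions. From Remark \ref{rmk:bdnyspoles} (stated exactly for $k=2$, $n=6$), the minors $\Delta_{p,v}(W)$ are the factors of $R(W)$, and $\D_{p,v}(W)$ is nondegenerate precisely when the corresponding factor is a simple pole of $\cI(W)(\cZ_*)$; hence condition (2) is equivalent to $\D_{\hat p,\hat v}(\hat W)$ being nondegenerate. Second, since $\cC(\D_{p,v}(W)) = \lim_{\Delta_{p,v}\to 0}\cC(W)$, the bases of the boundary positroid form a subset of $\cB(W)$, so by Remark \ref{boundaries via bases} its cell is a face of $\Sigma(W)$, and nondegeneracy forces the dimension to drop by exactly one. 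Thus the half of condition (1) asserting $B\subseteq\Sigma(\hat W)$ is automatic, and the proposition reduces to showing that a $5$-cell is shared by two \emph{distinct} admissible diagrams if and only if it is the cell of some nondegenerate boundary diagram.

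For the implication (1)+(2)$\Rightarrow$ shared, I would, for each of the $21$ diagrams, list its nondegenerate boundary diagrams, compute their bases via vertex-disjoint path systems (Theorem \ref{res:bases from vdps}), and exhibit a second admissible diagram whose cell contains the resulting $5$-cell. The routine sub-case is a move in which the boundary matrix agrees, after the half-edge slide of the Remark following Graphical Prompt \ref{mnemonic}, with that of a neighbouring diagram; there Graphical Prompt \ref{mnemonic} supplies the second diagram, and one checks the neighbour is admissible. The cells $\Sigma(E_i)$ are handled separately: since $\Sigma(E_{i,L})=\Sigma(E_{i,R})$ with $E_{i,L}\neq E_{i,R}$, the cell labelled $*$ in Figure \ref{EPVboundaries} already lies on the boundary of a cell parametrized by two distinct diagrams.

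For the converse, I would read the $38+6=44$ shared $5$-cells off Figure \ref{EPVboundaries} and, for each, produce an explicit boundary diagram realizing it, then invoke the dictionary above to confirm the associated minor is a simple pole. Completeness must then be checked against the full census of fifty $5$-cells of $\Gr(2,6)$: the six remaining cells are the ``dangling'' boundaries of a single $\Sigma(V_i)$, drawn as the dashed edges of Figure \ref{EPVboundaries}. Each $\Sigma(V_i)$ has exactly seven boundary diagrams (Example \ref{boundaryeg}) but eight codimension-$1$ faces, so these six cells are precisely the faces arising from a Le-degeneration rather than a propagator move (cf.\ Example \ref{eg:bounds3}); being shared only with the non-Wilson cells $N_{i+1},N_{i+2}$, they correctly fail both sides of the equivalence.

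I expect the main obstacle to be the shared boundaries that are invisible to the graphical test, as in Examples \ref{eg:bounds2}, \ref{eg:Vibdnys} and especially \ref{eg:mnemonicVEbound}: there the two boundary matrices are genuinely unequal, yet define the same positroid, so one cannot match them by comparing diagrams, nor even Le diagrams directly (recall Example \ref{eg:badLebehavior}). The comparison has to be made at the level of bases sets via Theorem \ref{res:bases from vdps}, which is exactly where the total-positivity machinery becomes indispensable. Concretely, the delicate point is to verify that \emph{every} shared codimension-$1$ boundary — including the unexpected second boundaries produced by the Le-diagram phenomenon — is nonetheless realized by some nondegenerate boundary diagram, and that no genuinely shared boundary slips through as a non-simple (degenerate) pole; this is precisely the content that the naive Graphical Prompt \ref{mnemonic} fails to capture.
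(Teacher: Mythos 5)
Your proposal is correct and takes essentially the same approach as the paper: the paper's proof of Proposition \ref{res:bounds} is exactly a direct calculation comparing all $5$-dimensional cells contained in $\cW(2,6)$ (as recorded in Figure \ref{EPVboundaries}) against all boundary diagrams of the $21$ admissible diagrams, which is precisely the finite verification you organize. Your preliminary reductions (simple pole $\Leftrightarrow$ nondegenerate boundary via Remark \ref{rmk:bdnyspoles}, face containment via Remark \ref{boundaries via bases}, and the accounting of the six ``dashed'' cells failing both conditions) are faithful elaborations of that same computation rather than a different argument.
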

\begin{proof}
This is verified by direct calculation, by computing all the $5$-dimensional cells contained in $\cW(2,6)$ (as shown in Figure \ref{EPVboundaries}) and all the boundary diagrams for admissible Wilson loop diagrams with $2$ propagators on $6$ points.
\end{proof}

In light of Proposition \ref{res:bounds}, and preliminary computations for higher $k$ and $n$, we make the following conjecture.

\begin{conj}$B$ is a $(3k-1)$-dimensional cell in $\cW(k,n)$ satisfying $B \subseteq \Sigma(W) \cap \Sigma(W')$ for two distinct Wilson loop diagrams $W$ and $W'$ if and only if \begin{enumerate}
\item The cell $B$ can be realized as the cell parametrized by some boundary diagram $\D_{\hat{p}, \hat{v}}(\hat{W})$, with $\hat{W}$ an admissible Wilson loop diagram with $k$ propagators on $n$ vertices.
\item The minor $\D_{\hat{v}, \hat{p} }(\hat{W})$ corresponds to a simple pole of $I(\hat{W})(\cZ_*)$.
\end{enumerate}\end{conj}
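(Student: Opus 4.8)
The plan is to upgrade the brute-force verification behind Proposition \ref{res:bounds} into a structural argument valid for all $k$ and $n$. Two ingredients must be in place before either direction can be attacked. First, we need the general analogue of the $3k$-dimensionality verified here for $(2,6)$: that every admissible $W$ yields a positroid cell $\Sigma(W)$ of dimension exactly $3k$, so that its codimension $1$ boundaries are precisely the $(3k-1)$-cells in the statement. Second, we need the general form of Remark \ref{rmk:bdnyspoles}: a bijection between the factors of $R(W)$ and the boundary diagrams $\D_{p,v}(W)$ under which the simple-pole factors correspond exactly to the non-degenerate boundary diagrams (the degenerate ones, by the admissibility inequality $|V_Q| \geq |Q|+3$ in Definition \ref{admisWLD}, dropping dimension by more than one). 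Granting these, Remark \ref{boundaries via bases} lets us reduce both implications to comparisons of bases sets $\BB(W)$, read off via the vertex-disjoint path systems of Theorem \ref{res:bases from vdps}.

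For the backward implication, suppose $B$ is the cell of a non-degenerate boundary diagram $\D_{\hat p, \hat v}(\hat W)$ whose minor $\Delta_{\hat v, \hat p}(\hat W)$ is a simple pole, so that $\dim B = 3k-1$ and $B$ is a genuine codimension $1$ boundary of $\Sigma(\hat W)$. The task is to exhibit a second, inequivalent diagram $W'$ with $B \subseteq \Sigma(W')$. I would split along the two clauses of Definition \ref{def:boundaryprops}: if $\D_{\hat v}\hat p$ slides $\hat p$ onto a new vertex, the propagator-slide phenomenon recorded after Graphical Prompt \ref{mnemonic} produces an explicit $W'$ differing from $\hat W$ by a half-edge move; if instead the endpoint of $\hat p$ merges with a neighbouring propagator, one re-splits the merged endpoints the other way, the matrix $\cC(\D_{\hat p, \hat v}(\hat W))$ being unchanged since the merge imposes only the single relation $\Delta_{\hat v, \hat p}=0$. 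In each case one checks $\BB_B \subseteq \BB(W')$. The difficulty is that, as Example \ref{eg:mnemonicVEbound} shows, the required $W'$ sometimes bears no visible relation to $\hat W$ at all, so the construction cannot rely on any graphical move and must be carried out purely at the level of bases.

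For the forward implication, let $B$ be a $(3k-1)$-cell with $B \subseteq \Sigma(W) \cap \Sigma(W')$ for distinct $W \neq W'$, so $\BB_B \subseteq \BB(W) \cap \BB(W')$. The content is that this forces $B$ to arise from a simple-pole boundary diagram, rather than from one of the ``hidden'' Le-boundaries of Example \ref{eg:badLebehavior} or from a boundary shared only with a non-WLD cell. The strategy is to translate $B \subseteq \Sigma(W)$ into the vanishing of a single minor of $\cC(W)$ on $B$, identify that minor with a factor of $R(W)$ from the explicit column structure of $\cC(W)$, and then argue that the requirement that the \emph{same} cell $B$ also bound a \emph{different} WLD cell $\Sigma(W')$ rules out the minor being a higher-order pole factor, leaving only the simple-pole (equivalently non-degenerate) boundary diagrams.

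The main obstacle is the forward direction, and specifically the absence of any a priori move relating two diagrams that share $B$. In the $(2,6)$ case one enumerates all $50$ five-dimensional cells against all boundary diagrams; for general $k,n$ this must be replaced by a uniform combinatorial criterion — phrased in terms of vertex-disjoint path systems — certifying when a given $(3k-1)$-cell is a common boundary of two WLD cells, together with a matching argument identifying exactly these cells with the non-degenerate boundary diagrams. This is a statement about the face poset of the subcomplex $\cW(k,n)$ that the Le diagrams alone do not control (Example \ref{eg:badLebehavior}), and it is where I expect the real work to lie. The two prerequisites above — the $3k$-dimensionality and the general form of Remark \ref{rmk:bdnyspoles} — are themselves substantial and would presumably be imported from the forthcoming general-$(k,n)$ analysis mentioned in the introduction.
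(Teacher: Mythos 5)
The statement you are trying to prove is a \emph{conjecture} in the paper: the authors offer no proof of it for general $k$ and $n$. The only thing they establish is the special case $k=2$, $n=6$ (Proposition \ref{res:bounds}), and that proof is pure enumeration --- they compute all $50$ five-dimensional cells of $\Gr(2,6)$, all boundary diagrams of the $21$ admissible diagrams, and check the two lists against each other by computer. So there is no paper argument for your proposal to match or diverge from; the honest comparison is between your outline and an open problem.

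Measured against that, your proposal is a reasonable research plan but it is not a proof, and you largely say so yourself. The concrete gaps are these. First, both of your prerequisites are themselves unproven: the paper only \emph{conjectures} that $\cW(k,n)$ is $3k$-dimensional in general (the introduction says ``conjecturally $3k$-dimensional''), and Remark \ref{rmk:bdnyspoles} is explicitly verified only by exhaustive check in the $(2,6)$ case --- importing them ``from the forthcoming general-$(k,n)$ analysis'' is importing results that do not yet exist. Second, in the backward direction your case split (slide versus merge) does not produce the required $W'$ in general: as you note via Example \ref{eg:mnemonicVEbound}, the second diagram sharing $B$ can bear no graphical relation to $\hat{W}$, so the construction ``must be carried out purely at the level of bases'' --- but that construction is exactly what is missing, and no candidate mechanism is proposed. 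Third, the forward direction reduces, in your own words, to ``a uniform combinatorial criterion --- phrased in terms of vertex-disjoint path systems --- certifying when a given $(3k-1)$-cell is a common boundary of two WLD cells,'' which is a restatement of the conjecture rather than a step toward it; the difficulty you flag (face-poset containments invisible to Le diagrams, Example \ref{eg:badLebehavior}) is real, but flagging it does not close it. In short: your skeleton correctly identifies where the work lies, but every load-bearing step is deferred, so nothing beyond the paper's own $(2,6)$ computation is actually established.
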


\begin{rmk} $\cW(2,6)$ is not simply the 6-skeleton of $\Gr(2,6)$. Indeed, each of the cells $\Sigma(V_i)$ admits a boundary in $\Gr(2,6)$ which cannot be realized as a the cell of a boundary diagram. These are the dashed gray boundaries seen in Figure \ref{EPVboundaries}, which are shared in $\Gr(2,6)$ only with cells that do not correspond to admissible Wilson loop diagrams.\end{rmk}

\begin{rmk} While the diagrams $E_{i,L}$ and $E_{i,R}$ both give rise to the same cell, Proposition \ref{res:bounds} indicates that they share a boundary. This is the boundary labelled $*$ in Figure \ref{EPVboundaries}, and it will play an important role in Section \ref{sec:cancellingpoles} when we consider the cancellation of spurious poles on codimension 1 boundaries. However, this 5-dimensional cell lies on the boundary of the space $\cW(2,6)$, as it is shared by the cells $\Sigma(E_{i,\bullet})$ and $\Sigma(N_{i+4})$ only.\end{rmk}

We conclude this section by providing a different intepretation of Example \ref{eg:mnemonicVEbound} in terms of propagator moves. Indeed, to be able to fully graphically predict the geometric relationship between this boundary cell and the Wilson loop diagrams it borders, one needs to step briefly into the world of inadmissible diagrams. We provide this as an example of the complexity present in the geometry even in this simple case, not to advocate including non-admissible diagrams into the theory.

\begin{eg}
Consider the non-admissible Wilson loop diagram
\[
W' = \begin{tikzpicture}[rotate=60,baseline=(current bounding box.east)]
	\begin{scope}
	\drawWLD{6}{1.5}
	\drawnumbers
	\drawprop{1}{0}{3}{0}
	\drawprop{2}{0}{6}{0}

		\end{scope}
	\end{tikzpicture} .
\]
Computing the positroid associated with $W'$ yields the cell $\Sigma(E_5)$. Direct computation shows that
\[\cC(\D_{(1,5), 5}(V_1))= \cC(\D_{(2,6),3}(W')),\]
i.e. according to Graphical Prompt \ref{mnemonic} we would expect $V_1$ and $W'$ to share a boundary. Indeed, if we draw the two boundary diagrams we obtain
\[
\D_{(1,5),5}(V_1) = \begin{tikzpicture}[rotate=60,baseline=(current bounding box.east)]
	\begin{scope}
	\drawWLD{6}{1.5}
	\drawnumbers
	\modifiedprop{1}{-0.5}{5}{0}{propagator,dotted}
	\boundaryprop{1}{-0.5}{6}{propagator}
	\drawprop{1}{0.5}{3}{0}
		\end{scope}
	\end{tikzpicture}
\quad
\sim
\quad
\begin{tikzpicture}[rotate=60,baseline=(current bounding box.east)]
	\begin{scope}
	\drawWLD{6}{1.5}
	\drawnumbers
	\modifiedprop{6}{0}{2}{0}{propagator,dotted}
	\boundaryprop{6}{0}{2}{propagator}
	\drawprop{1}{-0.5}{3}{0}

		\end{scope}
	\end{tikzpicture}
	\quad
= \D_{(2,6),3}(W'),
\]
i.e. the two diagrams are related by the same ``half-edge slide'' propagator move seen in equation \eqref{eq:slidingpropagator}.

Diagrams with crossing propagators are meaningless from a physical point of view, but this suggests they may be a useful tool to study the combinatorics and geometry of Wilson loop diagrams in the future.
\end{eg}

\subsection{The homology of $\cW(2,6)$}\label{sub:2,6homology}

Figure \ref{EPVboundaries} gives an insight into the geometry of the space $\cW(2,6)$. In this section, we discuss the geometry in more detail and compute the homology of this space. Note that this is \emph{not} the cohomology of the Amplituhedron. Nor is it the homology of the larger space $\cW_*(2,6)$ that is conjectured to be the preimage of the Amplituhedron. We only consider the positive subspace $\cW(2,6)$ tiled by the positroid cells.

Recall from Section \ref{sec:Feynmanrules} that the Wilson loop diagrams define a subspace $\cW_*(2,6) \subseteq \mathbb{G}_{\R}(2, 7)$ parametrized by the matrices $\cC_*(W)$. The external data $\cZ_* \in M(7, 6)$ defines a projection \bas \cZ_*: \G_{\R}(2, 7) \rightarrow \Gr(2, 6)\;,\eas that can be restricted onto the subspace $\cW_*(2,6)$. The holomorphic Wilson loop \[\mathfrak{W}_{2,6} = \sum_{\substack{W \; \textrm{admis.,} \\ \textrm{2 props., 6 verts.}}} \cI(W)(\cZ_*)\] assigns a volume to the projection $\cZ_*(\cW_*(2,6))$.

Recall from Definition \ref{dfn:WLDcomplex} that \bas \cW(2,6) = \bigcup_{\substack{W \; \textrm{admis.,} \\ \textrm{2 props., 6 verts.}}} \overline{\Sigma(W)} \;.\eas The Le diagrams associated to these cells are listed in Table \ref{nameWLDLetable}. By counting the $+$ symbols in each Le diagram, we see that $\cW(2,6)$ is a 6-dimensional submanifold of $\Gr(2,6)$. The following facts about $\cW(2,6)$ follow by direct computation.

\begin{enumerate}
\item There are six $6$-dimensional cells in $\Gr(2,6)$ that are not part of $\cW(2,6)$. These are denoted $N_i$ in Figure \ref{missingcellsfigure}.
\item There are six $5$-dimensional cells in $\Gr(2,6)$ lying on the boundaries between $N_i$ and $N_{i+1}$ which are also not in $\cW(2,6)$.
\item All other cells of dimension $\leq 5$ in $\Gr(2,6)$ are contained in $\cW(2,6)$.
\item The manifold $\cW(2,6)$ is not closed.  The boundary of the manifold consists of exactly the twelve $5$-dimensional cells indicated in Figure \ref{EPVboundaries} that are shared by exactly one cell defined by a Wilson loop diagram and a cell $N_i$.
\end{enumerate}

While there are several conjectures about the homology of the Amplituhedron  \cite{UnwindingAmplituhedron}, there is little know about the preimage of this space before the projection imposed by the external particle data, either in the BCFW or Wilson loop context. With the data described above in hand, we are able to compute the homology of $\cW(2,6)$ directly.

\begin{thm}\label{res:homology}
The homology groups of $\cW(2,6)$ are as follows: \bas H_i(\cW(2,6)) =\begin{cases} \R & \textrm{if } i \in 0, 5 \\ 0 & \textrm{else}.  \end{cases} \eas
\end{thm}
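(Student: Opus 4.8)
The plan is to compute the homology of $\cW(2,6)$ directly from its CW structure, using the explicit cell and boundary data already assembled in Figure \ref{EPVboundaries} and the four enumerated facts at the end of Section \ref{sub:2,6homology}. The space $\cW(2,6)$ is a finite regular CW complex (inherited from the positroid stratification of $\Gr(2,6)$, which is a regular CW complex by \cite[Theorem 1.1]{GrBall}), so its cellular chain complex is finite-dimensional in each degree and its homology is computable by linear algebra over $\R$. First I would tabulate the cells of each dimension contained in $\cW(2,6)$: by fact (3) all cells of dimension $\leq 4$ of $\Gr(2,6)$ lie in $\cW(2,6)$, while in dimensions $5$ and $6$ only certain cells appear (the $15$ cells of dimension $6$ coming from Table \ref{nameWLDLetable}, and the $44$ cells of dimension $5$ obtained by excluding the six $N_i$--$N_{i+1}$ boundary cells of fact (2)).

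Next I would assemble the cellular chain complex $C_\bullet(\cW(2,6); \R)$, where $C_i$ is the free $\R$-vector space on the $i$-dimensional cells and the boundary maps $\partial_i \colon C_i \to C_{i-1}$ are the cellular boundary maps. Because the complex is \emph{regular}, each incidence coefficient $[\sigma:\tau]$ for a codimension-one pair is $\pm 1$, so the boundary matrices have entries in $\{0, \pm 1\}$ with the signs determined by a consistent choice of orientation on each cell. The face-poset containment data (which cells border which, readable from the bases inclusions via Remark \ref{boundaries via bases} and recorded in Figure \ref{EPVboundaries}) tells us exactly which entries are nonzero. The homology is then $H_i = \ker \partial_i / \operatorname{im} \partial_{i+1}$, a finite rank computation.

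To pin down the answer I would proceed as follows. The class in $H_0$ is immediate: $\cW(2,6)$ is connected (all the top cells share codimension-one faces, as Figure \ref{EPVboundaries} exhibits), giving $H_0 = \R$. For the top degree, fact (4) identifies the boundary manifold $\partial \cW(2,6)$ as exactly twelve $5$-dimensional cells, each lying on a single top cell $\Sigma(W)$; since $\cW(2,6)$ is a $6$-manifold with boundary, one expects $H_6 = 0$ (there is no closed orientable top class once the boundary is nonempty). The interesting class sits in $H_5$, and the claim $H_5 = \R$ should follow because the combinatorics force exactly one nontrivial relation among the $5$-cycles: the cancellation structure visible in Figure \ref{EPVboundaries}, where each interior $5$-cell bounds two top cells, is precisely what makes a single $5$-dimensional cycle survive. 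Concretely I would verify that $\operatorname{rank} \partial_6$ and $\operatorname{rank} \partial_5$ combine to leave a one-dimensional quotient in degree $5$ and kill homology in degrees $1$ through $4$.

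\textbf{The hard part} will be organizing the degree $\leq 4$ computation rigorously enough to conclude vanishing there, since $\Gr(2,6)$ has a large number of low-dimensional cells and one must confirm that $\cW(2,6)$ inherits the full lower skeleton (fact (3)) \emph{and} that no spurious homology is created by the deletion of the $N_i$ cells and their shared boundaries. Rather than computing all boundary ranks by hand, I expect the cleanest route is to exploit the relationship between $\cW(2,6)$ and $\Gr(2,6)$ itself: since $\Gr(2,6)$ is a closed $8$-ball \cite{GrBall} (hence contractible) and $\cW(2,6)$ is obtained by removing the open stars of the six cells $N_i$ together with the six interior boundary cells, one can analyze the homotopy type via the effect of these deletions, or equivalently via a Mayer--Vietoris or long-exact-sequence argument comparing $\cW(2,6)$ to the full skeleton. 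This reduces the problem to understanding the local structure around the removed $N_i$ cells, where the dashed and solid grey edges of Figure \ref{EPVboundaries} record exactly the incidences needed. The main obstacle is thus bookkeeping: ensuring the orientation signs and the face incidences are consistent across all $59$ relevant cells so that the single surviving $5$-cycle is correctly identified and no lower homology is overlooked. This is why the authors note the computation was carried out with the aid of the Python code; a fully by-hand verification would hinge on the symmetry (the cyclic $\Z/6$ action visible in the indexing of Figure \ref{EPVboundaries}) to cut the casework down to a manageable number of orbit representatives.
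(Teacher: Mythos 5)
Your overall strategy---assemble the cellular chain complex of $\cW(2,6)$ from the positroid face data and compute boundary ranks---is the same as the paper's, whose entire proof is exactly such a direct (Sage-assisted) computation. The problem is that the computation you describe, run on the cell inventory you specify, cannot output the stated homology. You take from Section \ref{sub:2,6homology} the counts ``$15$ six-cells, $44$ five-cells, and the full $4$-skeleton of $\Gr(2,6)$.'' But $\Gr(2,6)$ is a regular CW ball, so $\chi(\Gr(2,6))=1$, and it has exactly one $8$-cell, six $7$-cells (the all-$+$ Le diagram of shape $(4,3)$, plus the five fillings of the full $2\times 4$ rectangle with a single $0$, which must lie in the top row or the lower-left box), $21$ six-cells and $50$ five-cells; hence the alternating count of cells of dimension $\leq 4$ is $1+50-21+6-1=35$. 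Your inventory then forces $\chi(\cW(2,6)) = 35 - 44 + 15 = 6$, whereas the theorem requires $\chi = 1 - 1 = 0$. No choice of boundary matrices can bridge this, so the linear algebra you outline would necessarily return something other than the claimed answer. The source of the discrepancy is that facts (2) and (3) of Section \ref{sub:2,6homology} are themselves erroneous (and inconsistent with both Theorem \ref{res:homology} and with Section \ref{sub:2,6geometry}, which states that \emph{every} $5$-cell of $\Gr(2,6)$ is a boundary of at least one WLD cell). Concretely, the $5$-cell with bases $\{14,15,24,25,34,35,45,46,56\}$ is the unique $5$-cell lying in $\overline{\Sigma(N_5)}\cap\overline{\Sigma(N_6)}$, yet its bases are contained in $\cB(V_4) = \binom{[6]}{2}\setminus\{16,23\}$, so by Remark \ref{boundaries via bases} it lies in $\overline{\Sigma(V_4)}\subset\cW(2,6)$. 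Thus $\cW(2,6)$ contains the entire $5$-skeleton, $\chi = 35-50+15 = 0$, and a correct proof must first recompute the face relations via bases containment rather than quoting the list of facts you rely on.

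Your two structural shortcuts also fail and would need replacing. First, $H_6=0$ does not follow from ``$\cW(2,6)$ is a compact $6$-manifold with nonempty boundary,'' because $\cW(2,6)$ is not a manifold: the paper's own Figure \ref{EPVboundaries} and Case 2 of the proof of Theorem \ref{boundarycancelthm} exhibit single $5$-cells lying in the closures of \emph{three} cells of $\cW(2,6)$ (namely $\Sigma(V_i)$, $\Sigma(V_{i+2})$, $\Sigma(V_{i+4})$), which is impossible in a $6$-manifold with boundary. Second, the ``cleanest route'' via contractibility of $\Gr(2,6)$ misdescribes the topology: $\cW(2,6)$ is a codimension-$2$ subcomplex of the $8$-ball, not the ball minus open stars, so the relevant comparison space is the $6$-skeleton $X$ of $\Gr(2,6)$, and $X$ is \emph{not} contractible ($H_6(X) = \ker\partial_6 = \mathrm{im}\,\partial_7 \cong \R^5$, since the seven-dimensional chain group has rank $6$ and $\ker \partial_7$ has rank $1$). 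With the corrected face data a long-exact-sequence argument does work, and is worth carrying out: since $\cW(2,6)$ is $X$ minus the six open cells $N_i$ only, the relative complex of $(X,\cW(2,6))$ is concentrated in degree $6$, giving $H_i(\cW(2,6))\cong H_i(X)$ for $i\leq 4$ (i.e.\ $\R,0,0,0,0$) and an exact sequence
\ba 0 \to H_6(\cW(2,6)) \to \R^5 \to \R^6 \to H_5(\cW(2,6)) \to 0 \;, \ea
where the middle map records the coefficients of the $N_i$ in a $6$-cycle of $X$. The theorem is thus equivalent to injectivity of that single $5\times 6$ system---i.e.\ to the statement that no nonzero $6$-cycle of $\Gr(2,6)$ is supported entirely on the $15$ WLD cells---and \emph{that} is the finite check which must be done by hand (using the $\Z/6$ symmetry) or by machine; neither your proposal nor the paper's two-line proof makes this step explicit.
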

\begin{proof}
This result was obtained by direct computation, using Python and Sympy \cite{sympy} to obtain the basis sets for each positroid and to compare each pair of basis sets, and the Chain Complexes module of the computer algebra system Sage \cite{sagemath} to compute the homology. The code used by the authors is provided as an auxiliary file on arXiv.
\end{proof}

\section{The cancellation of spurious poles \label{sec:cancellingpoles} }

In this section, we return to the question posed in Section \ref{sec:Feynmanrules}: verifying that the spurious singularites of the Wilson loop diagrams with $2$ propagators on $6$ points cancel in the holomorphic Wilson loop calculation given in \eqref{holowilsonloop}.

Recall from Proposition \ref{res:bounds} that in $\cW(2,6)$, \begin{enumerate}
\item The non-degenerate boundaries $\D_{p,v}(W)$ correspond to degree one factors of $R(W)(\cZ_*)$, i.e. spurious poles of $\cI(W)(\cZ_*)$. These poles are denoted $\Delta_{p,v}(W)$.
\item The limits $\lim_{\Delta_{p,v}(W) \rightarrow 0} C(W)$ correspond exactly to the codimension $1$ boundaries of $\Sigma(W)$ that are shared with some cell $\Sigma(W')$ with $W' \neq W$. (Recall that while the Wilson loop diagrams $W$ and $W'$ are different, their corresponding positroid cells may be the same.)
\end{enumerate}

In this section, we prove the following theorem.

\begin{thm} \label{boundarycancelthm}
Let $W$ be an admissible Wilson loop diagram in with $2$ propagators and $6$ external particles. Let $\Sigma'$ be any $5$-dimensional boundary of the $6$-dimensional cell $\Sigma(W)$. Then
\ba \sum_{\begin{subarray}{c} \D_{p,v}(W') = \Sigma', \\ W' \textrm{ admiss. } \end{subarray}} \Res_{\Delta_{p,v}(W') \rightarrow 0}\cI(W')(\cZ_*) =0\label{boundarycanceleq}\;.\ea\end{thm}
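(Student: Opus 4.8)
The plan is to turn \eqref{boundarycanceleq} into a finite collection of polynomial identities in the determinants $\sigma_{b,a}$ and $\langle abcd\rangle$, organised by the type of boundary $\Sigma'$ and cut down using the dihedral symmetry of the hexagon. First note that the sum in \eqref{boundarycanceleq} is non-empty only when $\Sigma'$ arises as a non-degenerate boundary diagram; by Proposition \ref{res:bounds} such $\Sigma'$ are exactly the $5$-dimensional cells shared by at least two distinct admissible diagrams, so for the ``dashed'' boundaries of Figure \ref{EPVboundaries} (which are not boundary diagrams) the sum is empty and \eqref{boundarycanceleq} holds vacuously. For the remaining boundaries, reading off Figure \ref{EPVboundaries} together with Proposition \ref{res:bounds} gives a short list of configurations: boundaries carried by exactly two cells, the three-cell boundaries exhibited in Example \ref{eg:bounds2}, and the special boundary $*$ shared by the equivalent pair $E_{i,L}, E_{i,R}$. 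The dihedral group acts simultaneously on the diagram labels and on the rows of $\cZ_*$, permuting the boundaries within each type, so it suffices to verify the cancellation for one representative of each.

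Second, I would make the residues explicit. Writing $\cI(W')(\cZ_*) = F_p^1 F_p^2 / R(W')(\cZ_*)$ as in \eqref{localizedint}, and using that for a non-degenerate boundary $\Delta_{p,v}(W')$ occurs as a single factor of $R(W')(\cZ_*)$ (Remark \ref{rmk:bdnyspoles}), the residue is obtained by deleting that factor and restricting the remaining rational function to the hypersurface $\Delta_{p,v}(W') = 0$. On this hypersurface the gauge projection $Z_*^\mu$ lies in the linear span of three of the vectors $Z_a^\mu$; writing $Z_*^\mu$ in terms of those three turns each surviving $\sigma_{b,a}$ into a fixed determinant $\langle\,\cdot\,\rangle$ times one of the three expansion coefficients, so every residue becomes an explicit rational function of the external data restricted to $\Sigma'$.

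The crux is then a comparison on $\Sigma'$. I expect that the localised numerator $F_p^1 F_p^2$, restricted to the boundary, depends only on $\Sigma'$ and not on which diagram $W'$ produced it: after the substitution above the spurious factors $\langle\,\cdot\,\rangle$ cancel and one is left with the fourth powers built from the same three surviving $Z_a^\mu$ (this is exactly what happens for the $V_1$, $E_{6,R}$ boundary of Example \ref{eg:bounds1}). Granting this, the vanishing of \eqref{boundarycanceleq} reduces to an identity among the leftover denominators $R(W')(\cZ_*)/\Delta_{p,v}(W')$ evaluated on $\Sigma'$. For the two-cell boundaries this is a sign computation: the two surviving denominators differ by a factor that is negative on $\Sigma'$, reflecting the half-edge slide discussed after \eqref{eq:slidingpropagator}. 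For the three-cell boundaries it is a genuine three-term relation, which I expect to follow from a Plücker relation among the $4\times 4$ determinants $\sigma_{b,a}$, i.e. from the linear dependence of any five of the $Z_a^\mu \in \R^4$.

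The main obstacle is this final step. Numerator agreement is geometric and uniform, but the denominator identities must be produced explicitly, and they are not mere sign flips in the hardest cases. In particular the three-cell cancellation requires pinning down the correct Plücker identity together with the precise factorisations of $R(W')$ coming from Definition \ref{def:RW}, and the $E_{i,L}$--$E_{i,R}$ boundary is subtler still: here the two diagrams define the \emph{same} positroid cell, so the two residues come from the distinct integrals $\cI(E_{i,L})(\cZ_*)$ and $\cI(E_{i,R})(\cZ_*)$ attached to one cell, and one must check that these genuinely cancel rather than reinforce. Once a representative of each boundary type is settled in this way, the dihedral symmetry propagates the result to every $\Sigma'$, proving \eqref{boundarycanceleq}.
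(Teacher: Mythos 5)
Your outline is sound for the simplest boundaries --- the vacuous (dashed) ones, the dihedral reduction, and the two-cell boundaries where $\Delta_{p,v}(W')$ is a single entry $\sigma_{p',v'}$ --- and there your argument essentially reproduces the paper's Case 1 (writing $Z_*^\mu$ in the span of the three surviving $Z_a^\mu$ is exactly the paper's equation \eqref{usefultrick}, and the residues cancel with a sign). But the proposal has a genuine gap at the boundaries where two propagators are brought together on an edge, i.e.\ the three-way boundaries $\Sigma(V_i)\cap\Sigma(V_{i+2})\cap\Sigma(V_{i+4})$ of Example \ref{eg:Vibdnys}. There the vanishing locus is the $2\times 2$ minor $\sigma_{p,v}\sigma_{q,v+1}-\sigma_{q,v}\sigma_{p,v+1}=0$, which does \emph{not} force $Z_*^\mu$ into any three-dimensional span, so your step ``on this hypersurface $Z_*^\mu$ lies in the linear span of three of the $Z_a^\mu$'' fails; the paper says explicitly that the trick \eqref{usefultrick} cannot be employed in this case. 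Your ``crux'' claim fails with it: the three residues are not a common, diagram-independent numerator times three denominators, and the cancellation is not a Pl\"ucker identity among the $\sigma_{b,a}$. What the paper actually does (its Case 2) is reparametrize each of the three cells so that the relevant minor becomes a single coordinate ($f$, $f_+$, $f_-$), perform an explicit $2\times 2$ change of basis inside each integral, and show the three residues are proportional to $\lim_{f\to 0}\cI(V_i)$ with coefficients $1$, $\tfrac{-1}{1-\sigma_{2,i}/\sigma_{1,i}}$ and $\tfrac{\sigma_{2,i}/\sigma_{1,i}}{1-\sigma_{2,i}/\sigma_{1,i}}$, which sum to zero --- nontrivial rational coefficients, not signs.

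The second gap is a miscount of which integrals contribute at a given $\Sigma'$. Your classification sorts boundaries by how many \emph{cells} meet there, but the sum \eqref{boundarycanceleq} runs over \emph{diagrams}, and these differ because $E_{j,L}\sim E_{j,R}$ give the same cell but distinct integrals. The boundaries of Example \ref{eg:mnemonicVEbound} are shared by only two cells, $\Sigma(V_i)$ and $\Sigma(E_j)$, so in your scheme they would be settled by a ``sign computation''; in fact three residues contribute there ($V_i$, $E_{j,L}$ and $E_{j,R}$), no pairwise sign flip can kill the sum, and these boundaries have nothing to do with the half-edge slide. The paper's Case 3 handles this by first proving, via further changes of variables, that $\cI(E_{j,L})+\cI(E_{j,R})=\cI(\cC_*(E_j))$ for a third, symmetric parametrization $\cC_*(E_j)$ of the common cell, and only then cancelling the residue of this combined integral against that of $\cI(V_i)$ as in Case 1. (Similarly, at the $V$--$P$--$E$ triples of Example \ref{eg:bounds2} there are four contributing diagrams, cancelling in two Case-1 pairs, not a three-term relation.) You flag the $E_{j,L}$--$E_{j,R}$ subtlety only at the $*$ boundary, where a pairwise cancellation does suffice; it is at these mixed $V$--$E$ boundaries that the combination step is essential, and nothing in your proposal produces it.
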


In other words, we show that the residues of the integrals $\Res_{\Delta_{p,v}(W) \rightarrow 0}\cI(W)(\cZ_*)$ cancel exactly on the $5$-dimensional cells $\lim_{\Delta_{p,v}(W) \rightarrow 0} C(W)$. A direct corollary of this is that the sum of the spurious poles cancel in the sum
\[\mathfrak{W}_{2,6} = \sum_{\substack{W \; admiss.\\ n\; verts, k\; props}} \cI(W)(\cZ_*)\]

We begin with an outline of the proof, which is proved in three cases.

Case 1 considers the types of propagators moves described in Section \ref{sub:mnemonic}. In this simplest case, we consider two Wilson loop Diagrams $W = (\{p,q\}, 6)$ and $W' = (\{p',q\}, 6)$, where $p$ and $p'$ are such that $\cC(\D_{p,v}(W)) = \cC(\D_{p',v'}(W'))$. For a picture of the type of computations considered in this case, see Example \ref{eg:bounds2}. This case captures the boundary between $P_i$ and $V_i$, $V_{i+2}$, $V_{i+3}$, or $V_{i+5}$; the boundaries between $P_i$ and $E_{i+1, L}$, $E_{i+2, R}$, $E_{i+4, L}$, or $E_{i+5, R}$; the boundaries between $V_i$ and $E_{i-1, R}$ or $E_{i+2, L}$; and the boundaries between $E_{i, R}$ and $E_{i,L}$ indicated in Figure \ref{EPVboundaries}. The fact that $\Sigma(E_{i,R}) = \Sigma(E_{i,L})$ then explains why certain triples of the form $\Sigma(P_i)$, $\Sigma(E_j)$ and $\Sigma(V_k)$ share a common boundary.

Case 2 considers what happens when bringing together two propagators on a boundary edge. In this case, one is not simply setting a single parameter to $0$, but instead setting a $2 \times 2$ minor of $\cC(W)$ to $0$. The actual calculation for this case requires a change of basis in the integral. For a picture of the type of computations considered in this case, see Example \ref{eg:Vibdnys}. This accounts for the three way boundary between $V_i$, $V_{i+2}$ and $V_{i+4}$ indicated in Figure \ref{EPVboundaries}.

Case 3 handles the type of boundary highlighted in Example \ref{eg:mnemonicVEbound}. As shown in that example, there are two spurious poles of $\cI(V_i)$, namely $\Delta_{(i,i-2), i-2}(V_i)$ and $\Delta_{(i,i+2), i+2}(V_i)$, that correspond to a $5$-dimensional boundary of the cells $\Sigma(E_{i-2})$ and $\Sigma(E_{i-3})$ respectively. However, the fact that this boundary sits between these two cells is missed by Graphical Prompt \ref{mnemonic}, indicating that something more complicated is happening here. For a picture of the type of computations considered in this case, see Example \ref{eg:mnemonicVEbound}.

\begin{proof}[Proof of Theorem \ref{boundarycancelthm}]
Recall from \eqref{localizedint} that for $W = (\{p,q\}, 6)$, the integral $\cI(W)$ localized at $\cZ_*$ is given by \bas \cI(W)(\cZ_*) = \frac{F_p^1F_q^2}{R(W)(\cZ_*)} \;,\eas where the $F_p^b$ are defined in $\eqref{Feq}$, and $R(W)(\cZ_*)$ is computed directly after Definition \ref{def:RW}. The proof proceeds by considering the three cases outlined above. Inspection of Figure \ref{EPVboundaries} shows that these cases are exhaustive.

\textbf{Case 1}: In this case, there exist a pair of Wilson loop diagrams $W = (\{p,q\}, 6)$ and $W' = (\{p',q\}, 6)$ such that $\cC(\D_{p,v}(W)) = \cC(\D_{p',v'}(W'))$. After localization, we have $\Delta_{v,p}(W) = \sigma_{p,v}$ and $\Delta_{v',p'}(W') = \sigma_{p',v'}$. Notice that $|V_p \cap V_{p'}| = 3$, and that $\{v\} = V_p \setminus V_{p'}$ and $\{v'\} = V_{p'} \setminus V_{p}$. In this case, it follows from the definition of $\sigma_{b,a}$  that
\bas \sigma_{p,v} = -\sigma_{p',v'}\;. \eas
Also, in the limit $\sigma_{p,v}\rightarrow 0$, we may write $Z_*^\mu$ as a linear combination of the $Z_w^\mu$ with $w \in V_p$. That is,
\ba Z_*^\mu = \sum_{w \in V_p; w \neq v} \alpha_w Z_w^\mu \quad \alpha_w \in \R \:. \label{usefultrick}\ea
In this limit, for any $w \in V_p \cap V_{p'}$, it follows from \eqref{usefultrick} that $\sigma_{p, w} = \alpha_w\langle i_p (i_p+1) j_p (j_p+1) \rangle$ and $\sigma_{p', w}=-\alpha_w\langle i_{p'}(i_{p'}+1) j_{p'}(j_{p'}+1) \rangle$. We can use this to write
\bas \lim_{\sigma_{p,v}\rightarrow 0}F_p^1 = \left(\sum_{w \in V_p \cap V_{p'}} \alpha_w Z_w^\mu\right)^4\langle i_p (i_p+1) j_p (j_p+1) \rangle^4 \;.\eas
Similarly,
\bas \lim_{\sigma_{p',v'}\rightarrow 0}F_{p'}^1 = \left(\sum_{w \in V_p \cap V_{p'}} -\alpha_w Z_w^\mu\right)^4\langle i_{p'} (i_{p'}+1) j_{p'} (j_{p'}+1) \rangle^4 \;.\eas
Therefore, after making the above substitutions at the appropriate limits, one sees exactly that \bas \lim_{\sigma_{p,v} \rightarrow 0} \big(\cI(W)(\cZ_*)\big) = -\lim_{\sigma_{p',v'} \rightarrow 0} \big(\cI(W')(\cZ_*)\big) .\eas

\textbf{Case 2}: In this case, and the next, one needs to first perform a careful change of variables before localization. Then one may proceed by calculations in the same vein as in Case 1. In this case, we consider what happens when bringing two propagators to meet on an edge, as in the boundary diagram $\D_{(i,i+4), i}(V_i)$ for example. We begin by rewriting the matrices $\cC_*(V_i)$, $\cC_*(V_{i+2})$ and $\cC_*(V_{i-2})$ such that the minors $\Delta_{i, (i, i+4)}(V_i)$, $\Delta_{i+2, (i+2, i)}(V_{i+2})$ and $\Delta_{i-2, (i-2, i+2)}(V_{i-2})$ can each be expressed by a single variable. In other words, we chose $\alpha, \alpha_+, \alpha_-, f, f_+$ and $f_-$ to be real variables such that
\bmls \cC_*(V_i) = \begin{bmatrix} 1& \cdots & c_{1,i} & c_{1,i+1} & \cdots \\1&  \cdots & \alpha c_{1,i} & \alpha c_{1,i+1} +f_+ & \cdots
\end{bmatrix} \quad ; \quad \cC_*(V_{i+2}) = \begin{bmatrix} 1& \cdots & c_{1,i+2} & c_{1,i+3}  & \cdots \\1&  \cdots & \alpha_+ c_{1,i+2} & \alpha_+ c_{1,i+3} +f_- & \cdots  \end{bmatrix} \\ \textrm{ and } \cC_*(V_{i-2}) = \begin{bmatrix} 1& \cdots & c_{1,i-2} & c_{1,i-1} & \cdots \\1&  \cdots & \alpha_- c_{1,i-2} & \alpha_- c_{1,i-1}+f & \cdots
\end{bmatrix} \;. \emls
Under this notation, the common $5$-dimensional cell, $\Sigma(V_i) \cap \Sigma(V_{i+2}) \cap \Sigma(V_{i-2})$, can be represented by the limit \bas \lim_{f \rightarrow 0} \cC_*(V_i) = \lim_{f_+ \rightarrow 0} \cC_*(V_{i+2}) = \lim_{f_- \rightarrow 0} \cC_*(V_{i-2}) \;. \eas  Rewriting the integrals $\cI(V_i)(\cZ_*)$ under this new parametrization, we obtain \bas \cI(V_i) = \int \frac{ \rd c_{1,i-2} \rd c_{1,i-1} \rd c_{1,i} \rd c_{1,i+1} \rd \alpha \rd f \rd  c_{2,i+2} \rd c_{2,i+3}}{ c_{1,i-2} c_{1,i-1} c_{1,i}  c_{1,i+1}  \alpha  f  c_{2,i+2} c_{2,i+3}} \delta^{8|8}(\cC_*(V_i) \cdot \cZ_*) \;,\eas \bas \cI(V_{i+2})(\cZ_*) = \int \frac{ \rd c_{1,i} \rd c_{1,i+1} \rd c_{1,i+2} \rd c_{1,i+3} \rd \alpha_+ \rd f_+ \rd  c_{2,i+4} \rd c_{2,i+5}}{ c_{1,i} c_{1,i+1} c_{1,i+2}  c_{1,i+3}  \alpha_+  f_+  c_{2,i+4} c_{2,i+5}} \delta^{8|8}(\cC_*(V_{i+2}) \cdot \cZ_*) \;,\eas and \bas \cI(V_{i-2})(\cZ_*) = \int \frac{ \rd c_{1,i-2} \rd c_{1,i-1} \rd c_{1,i} \rd c_{1,i+1} \rd  c_{2,i-4} \rd c_{2,i-3} \rd \alpha_- \rd f_- }{ c_{1,i-2} c_{1,i-1} c_{1,i}  c_{1,i+1}  c_{2,i-2} c_{2,i-3} \alpha_-  f_-  } \delta^{8|8}(\cC_*(V_{i-2}) \cdot \cZ_*) \;.\eas

In this calculation, since the respective boundaries send the $2\times 2$ minors $\Delta_{i, (i, i+4)}(V_i)$, $\Delta_{i+2, (i+2, i)}(V_{i+2})$, and $\Delta_{i-2, (i-2, i+2)}(V_{i-2})$ to $0$, one cannot employ the computational trick of \eqref{usefultrick} to compare $F_p^1$ to $F_{p'}^1$. Therefore, we need to introduce a change of parametrization of $\cC_*(V_{i+2})$ and $\cC_*(V_{i-2})$.
Given an appropriate choice of ordering of the propagators of each diagram,\footnote{Note that this process requires a choice of ordering on the propagators of $V_i$. We use the convention that the propagator $(j, j+2)$ defines the second row of the matrix $C_*(V_j)$ for $j = i$ or $j = i+2$, while the same propagator defines the first row of the associated matrix when $j = i-2$.
} define \ba \cC_*'(V_{i+2}) = \begin{bmatrix}\frac{-\alpha_+}{1-\alpha_+} & \frac{1}{1-\alpha_+} \\ 1 & 0 \end{bmatrix} \cC_*(V_{i+2})\;. \label{+changeofvars} \ea and \ba \cC_*'(V_{i-2}) = \begin{bmatrix}1 & 0  \\ \frac{-\alpha_-}{1-\alpha_-} & \frac{1}{1-\alpha_-}  \end{bmatrix} \cC_*(V_{i-2})\;. \label{-changeofvars} \ea Notice that the matrices $\lim_{f_+\rightarrow 0} \cC_*'(V_{i+2})$ and $\lim_{f_- \rightarrow 0}\cC_*'(V_{i-2})$ have the same form as $\lim_{f\rightarrow 0}\cC_*(V_i)$. That is, all three matrices have $0$s, $1$s and variables in the same entries.

Let $\cI (\cC_*'(V_{i+2}))$ and $\cI (\cC_*'(V_{i-2}))$ be the two integrals $\cI(V_{i+2})$ and $\cI(V_{i-2})$ rewritten in terms of the variables entries of $\cC_*'(V_{i+2})$ and $\cC_*'(V_{i-2})$ respectively.  Evaluating these integrals (as in Case 1, or as demonstrated explicitly in Example \ref{complicateddiagrameg}), we get that \bas \lim_{f_+ \rightarrow 0} \cI (V_{i+2}) \lim_{f_+ \rightarrow 0} \cI (\cC'_*(V_{i+2})) = \frac{-1}{1 - \sigma_{2,i}/\sigma_{1,i}} \lim_{f \rightarrow 0} \cI(V_i) \eas and \bas \lim_{f_- \rightarrow 0} \cI (V_{i-2}) \lim_{f_- \rightarrow 0} \cI (\cC'_*(V_{i-2})) = \frac{\sigma_{2,i}/\sigma_{1,i} }{1 - \sigma_{2,i}/\sigma_{1,i}} \lim_{f \rightarrow 0}\cI(V_i)  \;.\eas Therefore, $\lim_{f\rightarrow 0} \cI(V_i) + \lim_{f_+\rightarrow 0} \cI(V_{i+2}) + \lim_{f_-\rightarrow 0} \cI(V_{i-2}) = 0$, as desired.

\textbf{Case 3}: The final case is very similar to the second case, but with a different change of variables which exploits the fact that $\Sigma(E_{i,R}) = \Sigma(E_{i,L})$. The purpose of this change of variables is to write $\cI(E_{i,R})$ and $\cI(E_{i,L})$ under a common parametrization of this cell, and thus associate a single integral to it. We introduce a new matrix $\cC_*(E_i)$, given by  \bas \cC_*(E_i) = \begin{bmatrix} 1 & \ldots & c_{1,i+1} & c_{1,i+2} & c_{1,i+3}& c_{1,i+4} & 0 &\ldots \\ 1 & \ldots & 0 &c_{2,i+2} & c_{2,i+3} &c_{2,i+4}&c_{2,i+5} & \ldots  \end{bmatrix} \;,  \eas where the $c_{b,a}$ are real variables as usual. Note that this matrix is yet another parametrization of the cell $\Sigma(E_i)$. Write
\bas \cI\big(\cC_*(E_i)\big) = \int_{(\R^4)^2} \frac{\rd c_{1,i+1} \rd c_{1,i+2} \rd c_{1,i+3} \rd c_{1,i+4} \rd c_{2,i+2} \rd c_{2,i+3} \rd c_{2,i+4} \rd c_{2,i+5}}{\begin{multlined}c_{1,i+1} c_{2,i+5} c_{2,i+2} c_{1,i+4}(c_{1,i+2}c_{2,i+3}-c_{2,i+2}c_{1,i+3})\cdot\\(c_{1,i+3}c_{2,i+4}-c_{1,i+4}c_{2,i+3})\end{multlined}}\delta^{8|8}(C_*(E_i)\cdot \cZ_*)\;.\eas
By direct calculations of the form employed in Case 1, we see that \bas \lim_{\sigma_{1, i+4} \rightarrow 0} \cI(\cC_*(E_i))(\cZ_*)  & = - \lim_{c_{(i, i+2), i} \rightarrow 0} \cI(V_{i+2}) \\ \lim_{c_{2, i+2} \rightarrow 0} \cI(\cC_*(E_i))  & = - \lim_{c_{(i+3, i+5), i} \rightarrow 0} \cI(V_{i+3})\;.\eas However, $\cI(\cC_*(E_i))$ is a different integral than either $\cI(E_{i,R})$ or $\cI(E_{i,L})$. The rest of this proof proceeds by showing that \bas \cI(\cC_*(E_i)) =\cI(E_{i,R}) + \cI(E_{i,L}) \;.\eas

We proceed by a change of variables as in Case 2. We rewrite the integrals $\cI(E_{i,R})$ and $\cI(E_{i,L})$ in terms of the parametrization of $\Sigma(E_i)$ given by the matrix $\cC_*(E_i)$. As before, we fix an ordering of the propagators in both $\cC_*(E_{i,R})$ and $\cC_*(E_{i, L})$. Let the first row of both matrices correspond to the propagator $(i+1, i + 4)$. Under this choice of ordering, write \bas \cC_*'(E_{i,R}) = \begin{bmatrix}0 & 1 \\ \frac{c_{2,i+1}}{c_{2,i+1} - c_{1,i+1}} & \frac{-c_{1,i+1}}{c_{2,i+1} - c_{1,i+1}} \end{bmatrix} \cC_*(E_{i, R}) \eas and \bas \cC_*'(E_{i,L}) = \begin{bmatrix} \frac{c_{2,i-1}}{c_{2,i-1} - c_{1,i-1}} & \frac{-c_{1,i-1}}{c_{2,i-1} - c_{1,i-1}} \\ 0 & 1  \end{bmatrix} \cC_*(E_{i, L}) \;.\eas Under these changes of variables, the matrices $\cC'_*(E_{i,R})$ and $\cC'_*(E_{i, L})$ have the same form as $\cC_*(E_i)$. That is, they all have $0$s, $1$s and variables in the same positions.\footnote{The interested reader is invited to multiply out the expressions for $\cC_*'(E_{i,\bullet})$ and verify this for themselves!} Let $\cI(\cC_*'(E_{i,R}))$ and $\cI(\cC_*'(E_{i,L}))$ be the integrals $\cI(E_{i,R})$ and $\cI(E_{i,R})$ written in terms of the variables in the parametrization $\cC_*(E_i)$. By performing the appropriate changes of variables (as in Case 2), we obtain \bas \cI(\cC_*'(E_{i,R})) + \cI(\cC_*'(E_{i,L})) = \cI(\cC_*(E_i)) \;,\eas
as required.
\end{proof}

\begin{cor}
All spurious poles cancel out in the sum
\[\mathfrak{W}_{2,6} = \sum_{\substack{W \; admiss.\\ n\; verts, k\; props}} \cI(W)(\cZ_*).\]
\end{cor}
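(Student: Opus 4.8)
The plan is to deduce the corollary directly from Theorem~\ref{boundarycancelthm} by commuting the (finite) residue operation with the (finite) sum defining $\mathfrak{W}_{2,6}$. Concretely, I would show that the residue of $\mathfrak{W}_{2,6}$ along each of its spurious pole loci vanishes identically, which is precisely the assertion that all spurious poles cancel.

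First I would catalogue the spurious poles of the summands. By the discussion following Definition~\ref{def:RW}, every spurious pole of an individual $\cI(W)(\cZ_*)$ occurs along a factor of $R(W)(\cZ_*)$, and by Remark~\ref{rmk:bdnyspoles} the factors giving \emph{simple} poles are exactly the minors $\Delta_{p,v}(W)$ whose associated boundary diagram $\D_{p,v}(W)$ is non-degenerate. Each such minor localizes to a $5$-dimensional boundary cell $\Sigma' = \lim_{\Delta_{p,v}(W)\to 0}\cC(W)$ of $\Sigma(W)$. Thus, after localization, the simple spurious poles of $\mathfrak{W}_{2,6}$ are indexed by these non-degenerate boundary cells $\Sigma'$, and by Proposition~\ref{res:bounds} each such $\Sigma'$ is shared by at least two distinct admissible Wilson loop diagrams.

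Next I would fix one such pole locus, i.e.\ one cell $\Sigma'$, and compute its residue in the full sum; write $\Res_{\Sigma'}$ for the residue along this locus. Since $\mathfrak{W}_{2,6}=\sum_W \cI(W)(\cZ_*)$ is a finite sum, $\Res_{\Sigma'}\mathfrak{W}_{2,6}=\sum_W \Res_{\Sigma'}\cI(W)(\cZ_*)$, and a summand contributes a nonzero residue precisely when $\cI(W)$ has a simple pole along $\Sigma'$, i.e.\ precisely when $W=W'$ admits a boundary propagator $(p,v)$ with $\D_{p,v}(W')=\Sigma'$. Hence
\[\Res_{\Sigma'}\mathfrak{W}_{2,6} = \sum_{\substack{\D_{p,v}(W')=\Sigma',\\ W'\ \textrm{admiss.}}} \Res_{\Delta_{p,v}(W')\to 0}\cI(W')(\cZ_*) = 0,\]
the last equality being exactly Theorem~\ref{boundarycancelthm}. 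As $\Sigma'$ ranges over all non-degenerate boundary cells (equivalently, over all simple spurious poles of the theory), this shows every spurious pole of $\mathfrak{W}_{2,6}$ cancels.

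The only genuine work is bookkeeping, and that is where I expect the (mild) difficulty to lie: one must check that the terms acquiring a residue along a given $\Sigma'$ are exactly those appearing in the theorem's sum, with no double counting and no orphaned pole. This is underwritten by Remark~\ref{rmk:bdnyspoles} (which guarantees we are dealing with simple poles, so each factor of $R(W')$ contributes independently and no higher-order pole interferes) and by Proposition~\ref{res:bounds} (which guarantees that every simple spurious pole genuinely corresponds to a shared codimension~$1$ boundary, so that the theorem applies). It is worth flagging explicitly that the dashed grey boundaries of the $\Sigma(V_i)$ in Figure~\ref{EPVboundaries} are \emph{not} realizable as boundary diagrams, hence are \emph{not} spurious poles; they therefore impose no cancellation requirement and do not obstruct the argument.
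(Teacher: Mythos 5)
Your proposal is correct and is essentially the paper's own argument: the paper proves this corollary with the single line ``This follows directly from Theorem \ref{boundarycancelthm},'' and your write-up simply makes explicit the bookkeeping (grouping residues by shared boundary cell $\Sigma'$, invoking Remark \ref{rmk:bdnyspoles} and Proposition \ref{res:bounds} to match pole loci with boundary diagrams) that the paper leaves implicit. No discrepancy in approach or substance.
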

\begin{proof}
This follows directly from Theorem \ref{boundarycancelthm}.
\end{proof}

\begin{rmk}
There are 6 cells (each lying on the boundary of a cell of the form $\Sigma(V_i)$), that do not appear in the calculations of Theorem \ref{boundarycancelthm} at all. These are precisely the 5-dimensional cells which cannot be realized as cells associated to boundary diagrams of Wilson loop diagrams.
\end{rmk}

\bibliographystyle{abbrev}

\begin{thebibliography}{10}

\bibitem{Adamo:2011cb}
T. Adamo and L. Mason, \emph{{MHV diagrams in twistor space and the
  twistor action}}, Phys.Rev. \textbf{D86} (2012), 065019.

\bibitem{Wilsonloop}
S. Agarwala and E. Marin-Amat, \emph{Wilson loop diagrams and
  positroids}, Communications in Mathematical Physics (2016), arXiv:1509.06150.

\bibitem{Alday:2007hr}
L. F. Alday and J. Martin Maldacena, \emph{{Gluon scattering amplitudes at
  strong coupling}}, JHEP \textbf{0706} (2007), 064.

  \bibitem{PositroidsNoncrossing}
F. Ardila, F. Rinc\'on and L. K. Williams, {\em Positroids and non-crossing partitions}, Trans. Amer. Math. Soc. {\bf 368} (2016).



\bibitem{Arkani-Hamed:2013jha}
N. Arkani-Hamed and J. Trnka, \emph{{The Amplituhedron}}, JHEP
  \textbf{10} (2014), 30.


\bibitem{UnwindingAmplituhedron}
N. Arkani-Hamed, H. Thomas
and J. Trnka,
{\em{ Unwinding the amplituhedron in binary}}, JHEP \textbf{330} (Jan 03, 2018)



\bibitem{Boels:2007qn}
R. Boels, L. Mason, and D. Skinner, \emph{{From twistor actions to
  MHV diagrams}}, Phys.Lett. \textbf{B648} (2007), 90--96.

\bibitem{Bullimore:2009cb}
M. Bullimore, L. Mason, and D. Skinner, \emph{{Twistor-Strings,
  Grassmannians and Leading Singularities}}, JHEP \textbf{1003} (2010), 070.

\bibitem{GrassmannAmplitudebook}
F. Cachazo, A. Goncharov, A. Postnikov, J. Trnka,
  N. {A}rkani-{H}amed, J. Bourjaily, \emph{Grassmannian geometry of
  scattering amplitudes}, Cambridge University Press, Cambridge, UK, 2016.


\bibitem{MR2774624}
K. Casteels, \emph{A graph theoretic method for determining generating sets
  of prime ideals in quantum matrices}, J. Algebra \textbf{330} (2011),
  188--205.



\bibitem{GrBall}
P. Galashin, S. Karp, and T. Lam, \emph{The totally nonnegative {G}rassmannian is
  a ball}.

  \bibitem{GLL2}
K. R. Goodearl, S. Launois, and T. H. Lenagan, \emph{Torus-invariant prime
  ideals in quantum matrices, totally nonnegative cells and symplectic leaves},
  Math. Z. \textbf{269} (2011), no.~1-2, 29--45.

\bibitem{Amplituhedronsquared}
P. Heslop, B. Eden and L. Mason, \emph{The correlahedron},  (2017),
  arxiv:1701.00453.

\bibitem{cyclichyperplanes}
S. N. Karp and L. K. Williams, {\em The {$m=1$} amplituhedron and cyclic hyperplane arrangements}, S\'em. Lothar. Combin. {\bf 78B} (2017).


\bibitem{AmplituhedronDecomposition}
S. N. Karp, L. K. Williams, and Y. X. Zhang, \emph{Decompostions of
  amplituhedra}, arXiv:1708.09525.

\bibitem{Mason:2010yk}
L. Mason and D. Skinner, \emph{{The Complete Planar S-matrix of N=4 SYM
  as a Wilson Loop in Twistor Space}}, JHEP \textbf{1012} (2010), 018.

\bibitem{sagemath}
J. H. Palmieri, \emph{Chain complexes module in {S}ageMath: the {S}age {M}athematics {S}oftware {S}ystem} {V}ersion 7.2, \url{http://www.sagemath.org}.

\bibitem{ParkeTaylorids}
S. Parke and T. Taylor, \emph{Amplitude for n-gluon scattering},
  Physical Review Letters \textbf{56} (1986), no.~2459.

\bibitem{Postnikov}
A. Postnikov, \emph{Total positivity, {G}rassmannians, and networks},
  \url{arXiv:math/0609764}.

\bibitem{PSW}
A. Postnikov, D. Speyer, and L. K. Williams, \emph{Matching
  polytopes, toric geometry, and the totally non-negative {G}rassmannian}, J.
  Algebraic Combin. \textbf{30} (2009), no.~2, 173--191.



\bibitem{sympy}
{\em Sympy: symbolic computing in {P}ython}, PeerJ Computer Science, 3:e103, 2017, \url{http://www.sympy.org/}.


\end{thebibliography}

\end{document}